\documentclass{amsart}


\usepackage{geometry, amssymb, esint, verbatim, mathrsfs}

\newtheorem{lemma}{Lemma}[section]
\newtheorem{theorem}[lemma]{Theorem}
\newtheorem{proposition}[lemma]{Proposition}
\newtheorem{corollary}[lemma]{Corollary}
\theoremstyle{definition}
\newtheorem{definition}[lemma]{Definition}
\newtheorem{remark}[lemma]{Remark}
\newtheorem*{ack}{Acknowledgments}

\numberwithin{equation}{section}

\title[Fractional Morrey's inequality]{On Morrey's inequality\\ in Sobolev-Slobodecki\u{\i} spaces}

\author[Brasco]{Lorenzo Brasco}
\address[L.\ Brasco]{Dipartimento di Matematica e Informatica
	\newline\indent
	Universit\`a degli Studi di Ferrara
	\newline\indent
	Via Machiavelli 35, 44121 Ferrara, Italy}
\email{lorenzo.brasco@unife.it}

\author[Prinari]{Francesca Prinari}

\address[F. Prinari]{Dipartimento di Scienze Agrarie, Alimentari e Agro-ambientali
\newline\indent 
Universit\`a di Pisa
\newline\indent
Via del Borghetto 80, 56124 Pisa, Italy}
\email{francesca.prinari@unipi.it}

\author[Sk]{Firoj Sk}
\address[F. Sk]{Analysis And Partial Differential Equations Unit
\newline\indent 
Okinawa Institute Of Science And Technology
\newline\indent 
1919-1 Tancha, Onna-Son, Okinawa 904-0495, Japan}
\email{firojmaciitk7@gmail.com}

\usepackage[colorlinks=true,urlcolor=blue, citecolor=red,linkcolor=blue,
linktocpage,pdfpagelabels, bookmarksnumbered,bookmarksopen]{hyperref}
\usepackage[hyperpageref]{backref}

\date{\today}

\subjclass[2010]{46E35, 35B65}
\keywords{Fractional Sobolev spaces, embeddings, H\"older spaces, Morrey's inequality, Hardy's inequality, fractional $p-$Laplacian, regularity.}

\begin{document}

\begin{abstract}
We study the sharp constant in the Morrey inequality for fractional Sobolev-Slobodecki\u{\i} spaces on the whole $\mathbb{R}^N$. By generalizing a recent work by Hynd and Seuffert, we prove existence of extremals, together with some regularity estimates. We also analyze the sharp asymptotic behaviour of this constant as we reach the borderline case $s\,p=N$, where the inequality fails. This can be done by means of a new elementary proof of the Morrey inequality, which combines: a local fractional Poincar\'e inequality for punctured balls, the definition of capacity of a point and Hardy's inequality for the punctured space. Finally, we compute the limit of the sharp Morrey constant for $s\nearrow 1$, as well as its limit for $p\nearrow \infty$. We obtain convergence of extremals, as well.
\end{abstract}

\maketitle

\begin{center}
\begin{minipage}{10cm}
\small
\tableofcontents
\end{minipage}
\end{center}

\section{Introduction}

\subsection{Foreword}
Let $N\in\mathbb{N}\setminus\{0\}$ be the dimension, we fix an exponent $1\le p<\infty$ and a parameter $0<s\le 1$. We define the following norm\footnote{At risk of being pedantic, we point out that this is actually a norm on compactly supported functions.} for every $\varphi\in C^\infty_0(\mathbb{R}^N)$
\[
[\varphi]_{W^{s,p}(\mathbb{R}^N)}=\left\{\begin{array}{ll}
\displaystyle \left(\iint_{\mathbb{R}^N\times \mathbb{R}^N} \frac{|\varphi(x)-\varphi(y)|^p}{|x-y|^{N+s\,p}}\,dx\,dy\right)^\frac{1}{p},& \mbox{ if } 0<s<1,\\
&\\
\displaystyle\left(\int_{\mathbb{R}^N} |\nabla \varphi|^p\,dx\right)^\frac{1}{p},& \mbox{ if } s=1. 
\end{array}
\right.
\]
For $0<\alpha\le 1$, we also set
\[
[\varphi]_{C^{0,\alpha}(\mathbb{R}^N)}=\sup_{x,y\in \mathbb{R}^N, x\not=y} \frac{|\varphi(x)-\varphi(y)|}{|x-y|^\alpha}.
\]
When $s$ and $p$ are such that $s\,p>N$, we set 
\begin{equation}
\label{exponent}
\alpha_{s,p}=s-\frac{N}{p}.
\end{equation}
We observe that this is the unique exponent $\alpha$ such that 
\[
\varphi\mapsto[\varphi]_{W^{s,p}(\mathbb{R}^N)} \qquad \mbox{ and }\qquad \varphi\mapsto[\varphi]_{C^{0,\alpha}(\mathbb{R}^N)},
\]
are dimensionally equivalent, i.e. they scale the same way. Thus, for every $\varphi\in C^\infty_0(\mathbb{R}^N)$ not identically vanishing, the ratio
\[
\varphi\mapsto \left(\frac{[\varphi]_{W^{s,p}(\mathbb{R}^N)}}{[\varphi]_{C^{0,\alpha_{s,p}}(\mathbb{R}^N)}}\right)^p,
\]
is scale invariant. In this paper, we are interested in the lowest possible value of this ratio, which can be written as
\begin{equation}
\label{morreysharp}
\mathfrak{m}_{s,p}(\mathbb{R}^N)=\inf_{\varphi\in C^\infty_0(\mathbb{R}^N)} \Big\{[\varphi]_{W^{s,p}(\mathbb{R}^N)}^p\, : \, [\varphi]_{C^{0,\alpha_{s,p}}(\mathbb{R}^N)}=1\Big\},
\end{equation}
thus generalizing to the fractional case $0<s<1$ some recent studies by Hynd and Seuffert (see the series of papers \cite{HS0, HS, HS2} and \cite{HS3}).
\vskip.2cm\noindent
By definition, the quantity $\mathfrak{m}_{s,p}(\mathbb{R}^N)$ is nothing but the sharp constant in the so-called {\it Morrey inequality}
\begin{equation}
\label{morrey_intro}
C_{N,s,p}\,[\varphi]^p_{C^{0,\alpha_{s,p}}(\mathbb{R}^N)}\le [\varphi]^p_{W^{s,p}(\mathbb{R}^N)},\qquad \mbox{ for every } \varphi\in C^\infty_0(\mathbb{R}^N).
\end{equation}
The local case $s=1$ is well-known and can be found for example in \cite[Theorem 3.8]{Gi}. For the fractional case $0<s<1$, we refer for example to \cite[Theorem 7.23]{Leoni}.
This inequality can be seen as a surrogate for $s\,p>N$ of the {\it Sobolev inequality} (see for example \cite[Theorem 10.2.1]{Maz} for the case $0<s<1$)
\[
T_{N,s,p}\,\left(\int_{\mathbb{R}^N} |\varphi|^\frac{N\,p}{N-s\,p}\,dx\right)^\frac{N-s\,p}{N}\le [\varphi]^p_{W^{s,p}(\mathbb{R}^N)},\qquad \mbox{ for every } \varphi\in C^\infty_0(\mathbb{R}^N),
\]
which is valid for $s\,p<N$. Observe that both inequalities are {\it homogeneous}, in the sense that they only contain norms having a scaling property. The main difference between the two inequalities is that Morrey's inequality does not provide ``size'' (i.e. integrability) estimates on functions, but only estimates on their oscillations. Actually, for $s\,p>N$ it is not possible to get size estimates only in terms of $[\,\cdot\,]_{W^{s,p}(\mathbb{R}^N)}$ for functions in $C^\infty_0(\mathbb{R}^N)$.
This is the ultimate reason why, for example, the completion of $C^\infty_0(\mathbb{R}^N)$ in the norm $[\,\cdot\,]_{W^{s,p}(\mathbb{R}^N)}$
{\it fails to be a functional space} for $s\,p>N$ (see \cite{BGCV, MPS} and the references therein on this point).

\subsection{Main results}

Our first result is an elementary proof of the fractional Morrey inequality (see Theorem \ref{thm:morrey} below). This is simply based on showing that
\[
\mathfrak{m}_{s,p}(\mathbb{R}^N)\gtrsim \Lambda_{s,p}:=\inf_{\varphi\in C^1(\overline{B_1(0)})} \Big\{[\varphi]_{W^{s,p}(B_1(0))}^p\, :\, \|\varphi\|_{L^p(B_1(0))}=1, \, \varphi(0)=0\Big\}.
\]
In other words, we show that it is possible to bound $\mathfrak{m}_{s,p}(\mathbb{R}^N)$ from below by means of the ``local'' fractional Poincar\'e constant, in a punctured disk. In turn, the latter is ``quantitatively'' positive: indeed, by an argument {\it \`a la} Maz'ya, it is (lenghty but) not difficult to prove that
\[
\Lambda_{s,p}\gtrsim \widetilde{\mathrm{cap}}_{s,p}\big(\{0\};B_1(0)\big),
\]
see Proposition \ref{prop:poincare_cap}. Here $\widetilde{\mathrm{cap}}_{s,p}(\{0\};B_1(0))$ is the {\it fractional $p-$capacity of order $s$ of $\{0\}$ relative to $B_1(0)$}, see Definition \ref{defi:capacity}. This quantity is positive precisely for the range $s\,p>N$.
\par
In turn, by relying on a symmetrization argument, we show that 
\[
\widetilde{\mathrm{cap}}_{s,p}\big(\{0\};B_1(0)\big)\gtrsim \mathfrak{h}_{s,p}(\mathbb{R}^N\setminus\{0\}):=\inf_{\varphi\in C^\infty_0(\mathbb{R}^N\setminus\{0\})}\left\{[\varphi]^p_{W^{s,p}(\mathbb{R}^N)}\, :\, \int_{\mathbb{R}^N} \frac{|\varphi|^p}{|x|^{s\,p}}\,dx=1\right\}.
\]
Thus, by joining the last three estimates, we get a lower bound of the type
\begin{equation}
\label{morrey-hardy}
\mathfrak{m}_{s,p}(\mathbb{R}^N)\gtrsim \mathfrak{h}_{s,p}(\mathbb{R}^N\setminus\{0\}),
\end{equation}
in terms of the Hardy constant $\mathfrak{h}_{s,p}(\mathbb{R}^N\setminus\{0\})$ for the punctured space (see Corollary \ref{coro:mistery}). This can be seen as the analogue for the case $s\,p>N$ of the well-known lower bound on the sharp Sobolev constant, in terms on the sharp Hardy constant for the whole space, see for example \cite[proof of Theorem 1]{MS} and \cite[Section 4]{FS}.
\vskip.2cm\noindent
Other results obtained in this paper concern the existence and regularity of minimizers for the minimization problem \eqref{morreysharp}. Indeed, by relying on the same argument as in \cite{HS}, we will show that the infimum in \eqref{morreysharp} is attained on the larger homogeneous space $\mathcal{W}^{s,p}(\mathbb{R}^N)$ (see \eqref{goodspace} below for its definition). Any such a minimizer, called {\it extremal} in the sequel, will be shown to verify weakly the equation
\begin{equation}
\label{ELintro}
(-\Delta_p)^s u=C\,(\delta_{x_0}-\delta_{y_0}),\qquad \mbox{ in }\mathbb{R}^N,
\end{equation}
for a suitable $C\not=0$ and $x_0,y_0$ distinct points. Here $(-\Delta_p)^s$ is the {\it fractional $p-$Laplacian of order $s$}, defined in its weak form by 
\[
\langle(-\Delta_p)^s u,\varphi\rangle:=\iint_{\mathbb{R}^N\times \mathbb{R}^N} \frac{J_p(u(x)-u(y))\,(\varphi(x)-\varphi(y))}{|x-y|^{N+s\,p}}\,dx\,dy, \qquad \mbox{ for every } \varphi\in C^\infty_0(\mathbb{R}^N),
\]
and $J_p(t)=|t|^{p-2}\,t$, for every $t\in\mathbb{R}$. By relying on some recent a priori estimates contained in \cite{BLS} and \cite{GL}, we will prove that any extremal $u$ enjoys the following additional regularity
\[
u\in C^{0,\delta}_{\rm loc}(\mathbb{R}^N)\cap L^\infty(\mathbb{R}^N),\qquad \mbox{ for every } \alpha_{s,p}\le \delta<\frac{s\,p-N}{p-1},
\]
and
\[
u\in C^{0,\delta}_{\rm loc}(\mathbb{R}^N\setminus\{x_0,y_0\}),\qquad \mbox{ for every } \alpha_{s,p}\le \delta<\min\left\{\frac{s\,p}{p-1},1\right\},
\]
see Theorems \ref{thm:bound} and \ref{thm:holderreg}. In \cite{HS}, for the local case $s=1$, the global boundedness of extremals is proved by exploiting their geometric properties. Here on the contrary, we simply rely on the optimality condition \eqref{ELintro}, in conjunction with a {\it Moser--type iterative argument} based on Hardy's inequality. The result is then readily obtained, actually for more general right-hand sides (see Proposition \ref{prop:limita}).
\vskip.2cm\noindent
The explicit value of the sharp constant $\mathfrak{m}_{s,p}(\mathbb{R}^N)$ is still unknown and its determination seems quite a challenging problem, already in the local case $s=1$. Thus, it is interesting to get at least some upper and lower bounds, which are ``quantitatively'' sharp, in a suitable sense. At this aim, we devote a part of our paper to study some meaningful asymptotics for $\mathfrak{m}_{s,p}(\mathbb{R}^N)$. In particular, we will analyze its asymptotic behaviour as:
\begin{itemize}
\item[(i)] $1\le N<p$ are fixed and $s\searrow N/p$;
\vskip.2cm
\item[(ii)] $0<s<1\le N$ are fixed and $p\nearrow \infty$;
\vskip.2cm
\item[(iii)] $1\le N<p$ are fixed and $s\nearrow 1$.
\end{itemize}
As we will explain in the next subsection, the item (i) is the most difficult and original one. Here, we will crucially exploit our proof of Morrey's inequality, that we exposed above.
Indeed, even if our lower bound on $\mathfrak{m}_{s,p}(\mathbb{R}^N)$ is very likely not optimal, it permits to get quite easily the sharp decay rate to $0$ of $\mathfrak{m}_{s,p}(\mathbb{R}^N)$, as $p>N$ is fixed and $s\searrow N/p$. More precisely, by using the estimate \eqref{morrey-hardy}
and the explicit expression of the Hardy constant $\mathfrak{h}_{s,p}(\mathbb{R}^N\setminus\{0\})$, we can prove that
\[
0<\liminf_{s\searrow \frac{N}{p}} \frac{\mathfrak{m}_{s,p}(\mathbb{R}^N)}{(s\,p-N)^{p-1}}.
\]
This has to be coupled with the information
\[
\limsup_{s\searrow \frac{N}{p}} \frac{\mathfrak{m}_{s,p}(\mathbb{R}^N)}{(s\,p-N)^{p-1}}<+\infty,
\]
which is here obtained by means of a careful choice of a test function. Both results are contained in Proposition \ref{prop:spN} below. 
\par
The lower bound provided our proof is good enough to handle item (ii) above, as well. 
We show in Proposition \ref{prop:limitinfty} that
\[
\lim_{p\to\infty} \Big(\mathfrak{m}_{s,p}(\mathbb{R}^N)\Big)^\frac{1}{p}=1,
\]
thus generalizing to the case $0<s<1$ a result recently obtained by the first two authors and Zagati in \cite{BPZ}. However, here we use a slightly different proof, with respect to that of \cite{BPZ}.
Finally, for the case (iii), we show in Theorem \ref{thm:limits1} that
\[
\lim_{s\nearrow 1}(1-s)\, \mathfrak{m}_{s,p}(\mathbb{R}^N)= K_{p,N}\, \mathfrak{m}_{1,p}(\mathbb{R}^N),
\]
where $K_{p,N}$ is an explicit constant. The renormalization factor $(1-s)$ is not surprising, in light of the celebrated {\it Bourgain-Brezis-Mironescu formula} (see \cite[Corollary 3.20]{EE}), which reads as follows
\[
\lim_{s\nearrow 1} (1-s)\,[\varphi]^p_{W^{s,p}(\mathbb{R}^N)}=K_{N,p}\,[\varphi]^p_{W^{1,p}(\mathbb{R}^N)},\qquad \mbox{ for every } \varphi\in C^\infty_0(\mathbb{R}^N).
\] 
Our result comes with a convergence result of fractional extremals to those of the local case $s=1$, in the sense of local uniform convergence. It can be regarded as a $\Gamma-$convergence result, we refer to \cite{Dal} for the general theory and to \cite{ADM, BPS, DR, ponce} for some results of this type in the context of fractional Sobolev spaces. 
\subsection{A note on the borderline case $s\,p=N$}
We want to expand a bit the discussion on the asymptotic behaviour of $\mathfrak{m}_{s,p}(\mathbb{R}^N)$, as $s\,p\searrow N$.
\par
Let us start with the local case $s=1$. Some of the classical proofs that can be found in the textbooks provide the following lower bound
\[
\mathfrak{m}_{1,p}(\mathbb{R}^N)\ge C_{p,N}\sim (p-N)^p,\qquad \mbox{ as } p\searrow N,
\]
see for example \cite[Theorem 9.12]{Bre} and \cite[Theorem 3.8]{Gi}. However, this behaviour is not sharp. By resorting to some finer estimates relying on the following {\it local potential estimate}
\[
\int_{B_r(x)} |\varphi(y)-\varphi(z)|\,dy\le C_N\,\int_{B_r(x)} \frac{|\nabla \varphi(y)|}{|x-y|^{N-1}}\,dy,\qquad \mbox{ for } x\in\mathbb{R}^N, z\in B_r(x),
\] 
one can get
\[
\mathfrak{m}_{1,p}(\mathbb{R}^N)\ge \widetilde C_{p,N}\sim (p-N)^{p-1},\qquad \mbox{ as } p\searrow N.
\]
See among others the proofs of \cite[Theorem 3]{EG} and \cite[Theorem 2.4.4]{Zi}.
Asymptotic optimality of this lower bound can be easily proved by using a truncation of the {\it fundamental solution for the $p-$Laplacian}, i.e.
\[
\varphi(x)=\Big(1-|x|^\frac{p-N}{p-1}\Big)_+,\qquad \mbox{ for } x\in\mathbb{R}^N.
\]
The choice of this test function can be easily guessed, as we now explain. Indeed, it has been shown in \cite{HS} that each extremal $u$ for $\mathfrak{m}_{1,p}(\mathbb{R}^N)$ solves the following local version of \eqref{ELintro}
\[
-\Delta_p u=C\,(\delta_{x_0}-\delta_{y_0}),
\]
for a suitable pair of distinct points $x_0,y_0\in\mathbb{R}^N$.  Moreover, $u$ is smooth in the doubly punctured space $\mathbb{R}^N\setminus\{x_0,y_0\}$, while around these two points we have
\[
u(x)-u(x_0)\sim |x-x_0|^\frac{p-N}{p-1} \mbox{ as } x\to x_0\qquad \mbox{ and }\qquad u(x)-u(y_0)\sim |x-y_0|^\frac{p-N}{p-1} \mbox{ as }x\to y_0,
\]
see \cite[Section 4]{HS}.
\begin{remark}[The one-dimensional case, $s=1$]
This is quite peculiar, in this case extremals are known, together with the value of the sharp constant (see \cite[Section 2.3]{HS}). We have 
\[
\mathfrak{m}_{1,p}(\mathbb{R})=1,
\]
and observe that this does not vanish, as $p$ goes to $1$. This is consistent with the fact that for $p=1$ the homogeneous Sobolev space still embeds continuously into a space of continuous function and we have
\[
2\,\|\varphi\|_{L^\infty(\mathbb{R})}\le [\varphi]_{W^{1,1}(\mathbb{R})},\qquad \mbox{ for every } \varphi\in C^\infty_0(\mathbb{R}),
\]
see \cite[Theorem 5.5]{BGCV}.
\end{remark}
Let us now come to the fractional case $0<s<1$, which is our primary target. A proof of \eqref{morrey_intro} in its {\it nonhomogeneous} version, i.e.
\[
C_{N,s,p}\,\left(\|\varphi\|_{L^\infty(\mathbb{R}^N)}+[\varphi]_{C^{0,\alpha_{s,p}}(\mathbb{R}^N)}\right)^p\le \Big(\|\varphi\|_{L^p(\mathbb{R}^N)}+[\varphi]_{W^{s,p}(\mathbb{R}^N)}\Big)^p,\quad \mbox{ for every } \varphi\in C^\infty_0(\mathbb{R}^N),
\] 
can be found for example in \cite[Theorem 2.7.1, case (i)]{Tri}. By a simple scaling argument, we see that if we call $\mathfrak{M}_{s,p}(\mathbb{R}^N)$ the sharp constant for this inequality, then we have
\[
\mathfrak{M}_{s,p}(\mathbb{R}^N)\le \mathfrak{m}_{s,p}(\mathbb{R}^N).
\]
Thus, lower bounds on the nonhomogeneous constant $\mathfrak{M}_{s,p}(\mathbb{R}^N)$ would provide lower bounds on our constant, as well. Unfortunataley, the proof in \cite{Tri} is done in the wider context of Triebel-Lizorkin spaces, defined in terms of Fourier analysis and Paley--Littlewood--type decompositions. In particular, the result in \cite{Tri} concerns some {\it equivalent norms} and not directly the ones considered here.
Tracing back a possible explicit lower bound on $\mathfrak{M}_{s,p}(\mathbb{R}^N)$ in this proof seems quite a gravesome task, nearly prohibitive.
\par
An older proof due to Peetre directly deals with the homogeneous inequality \eqref{morrey_intro}, see \cite[Th\'eor\`eme 8.2]{Pe}. However, here as well, it seems quite complicate to extrapolate a quantitative lower bound on $\mathfrak{m}_{s,p}(\mathbb{R}^N)$. Indeed, the use of abstract real interpolation techniques certainly simplifies the task of getting \eqref{morrey_intro} for equivalent norms, but it complicates a lot the task of producing some effective lower bounds. 
\par
It is possible to give a handier proof by relying on {\it Campanato spaces}  and the {\it Campanato-Meyers Theorem}, see for example \cite[Chapter 2, Section 3]{Gi} for these classical tools. 
This proof can be found in \cite[Theorem 8.2]{DNPV}, for the case of the non-homogeneous Morrey inequality in open bounded sets having some smoothness condition. The homogeneous case in the whole space can be found in \cite[Corollary 2.7]{BGCV}, where \eqref{morrey_intro} is obtained with a constant incorporating the factor $(1-s)$. 
\par
In this proof, the first step is to get the fractional Poincar\'e-Wirtinger inequality
\begin{equation}
\label{PWs}
\int_{B_R(x_0)} \left|\varphi-\fint_{B_R(x_0)} \varphi\,dy\right|^p\,dx\le (1-s)\,\mu_{N,p}\,R^{s\,p}\,[\varphi]_{W^{s,p}(\mathbb{R}^N)}^p,\qquad \mbox{ for every ball } B_R(x_0).
\end{equation}
This entails an estimate in the homogeneous Campanato space $\mathscr{L}^{p,\lambda}(\mathbb{R}^N)$, with $\lambda=s\,p$. Here
\[
\mathscr{L}^{p,\lambda}(\mathbb{R}^N)=\Big\{\varphi\in L^p_{\rm loc}(\mathbb{R}^N)\, :\, [u]_{\mathscr{L}^{p,\lambda}(\mathbb{R}^n)}<+\infty\Big\},
\]
and
\[
[\varphi]_{\mathscr{L}^{p,\lambda}(\mathbb{R}^N)}:=\left(\sup_{x_0\in\mathbb{R}^N,\,R>0} \frac{1}{R^\lambda}\,\int_{B_R(x_0)} \left|\varphi-\fint_{B_R(x_0)} \varphi\,dy\right|^p\,dx\right)^\frac{1}{p}.
\]
Then we use that if $\lambda=s\,p$ is larger than the dimension $N$, this space embeds into a homogeneous space of H\"older continuous functions, with exponent $\alpha_{s,p}$ given by \eqref{exponent}. More precisely, we have
\begin{equation}
\label{campanatoholder}
L_{N,s,p}\,[u]_{C^{0,\alpha_{s,p}}(\mathbb{R}^N)}\le [u]_{\mathscr{L}^{p,s\,p}(\mathbb{R}^N)}.
\end{equation}
This is the Campanato-Meyers Theorem (see \cite[Teorema 5.I]{Cam} and \cite[Theorem]{Mey}). By joining \eqref{campanatoholder} and \eqref{PWs}, we thus get \eqref{morrey_intro}, with the lower bound
\[
\mathfrak{m}_{s,p}(\mathbb{R}^N)\ge \frac{(L_{N,s,p})^p}{(1-s)\,\mu_{N,p}}.
\]
However, there is a vicious detail here: the presence of the Campanato-Meyers constant $L_{N,s,p}$. When $\lambda=s\,p=N$ the Campanato space $\mathscr{L}^{p,N}(\mathbb{R}^N)$ is isomorphic to a space of BMO functions, which are neither continuous nor bounded in general.
Thus, we must expect that the constant $L_{N,s,p}$ in \eqref{campanatoholder}, and accordingly the lower bound on $\mathfrak{m}_{s,p}(\mathbb{R}^N)$, deteriorates as $s\,p$ goes to $N$. A careful inspection of Campanato's proof of \eqref{campanatoholder} (see the proof of the key estimate \cite[equation (2.21), Lemma 2.2]{Gi} or directly \cite[Proposition 1]{KM}), reveals that
\[
L_{N,s,p}\sim s\,p-N,\qquad \mbox{ as } s\,p\searrow N.
\] 
Thus, by this method we get a lower bound which, apart for not being optimal, {\it does not even display the sharp decay rate} as we approach the borderline case $s\,p=N$. 
\par
As explained in the previous subsection, our method of proof of Theorem \ref{thm:morrey} permits to circumvent this problem and rectifies this non-sharp behaviour.
\begin{remark}[The one-dimensional case, $0<s<1$]
Even in this case $\mathfrak{m}_{s,p}(\mathbb{R})$ is not known.
Obtaining the sharp decay rate of $\mathfrak{m}_{s,p}(\mathbb{R}^N)$ would be easier in the case $p=2$ and $N=1$, for $s\searrow 1/2$.
In this case, we can exploit the following characterization 
\[
[\varphi]_{W^{s,2}(\mathbb{R})}=\frac{1}{2\,\pi\,C_s}\,\int_{\mathbb{R}} |\xi|^{2\,s}\,\Big|\mathcal{F}[\varphi](\xi)\Big|^2\,d\xi,\qquad \mbox{ for every } \varphi\in C^\infty_0(\mathbb{R}),
\] 
in terms of the Fourier transform $\mathcal{F}$,
see for example \cite[Chapter VII, Section 9]{Ho}.
The sharp decay rate in this particular situation has been obtained in \cite[Theorem 3.6]{BB} by Bianchi and the first author.
\par
Still for $N=1$, but this time for the full range $s\,p>1$, an explicit estimate on the Morrey constant can be found in a paper by Simon, see \cite[Corollary 26]{Si}. However, here as well, the resulting lower bound has a suboptimal asymptotic behaviour in the regime $s\,p\searrow 1$ (the same as in the proof using the Campanato-Meyers Theorem). 
\end{remark}

\subsection{Some open issues}

Left aside for the explicit determination of $\mathfrak{m}_{s,p}(\mathbb{R}^N)$ and its extremals (an unsolved problem already for the case $s=1$), we want to list some
questions left open. We believe that some of them are quite interesting and deserve a future investigation. 
\par
At first, it would be interesting to improve our Proposition \ref{prop:spN} and determine $C_{N,p}>0$ such that
\[
\lim_{s\searrow \frac{N}{p}}\frac{\mathfrak{m}_{s,p}(\mathbb{R}^N)}{(s\,p-N)^{p-1}}=C_{N,p}.
\]
Still on the same problem, it seems intriguing to know what happens to $\mathfrak{m}_{s,p}(\mathbb{R}^N)$ as $s\,p\searrow N$, {\it without fixing} $p>N$. In other words, we allow both $s$ and $p$ to vary. The most interesting case is when
\[
s\searrow 0\qquad \mbox{ and }\qquad p\nearrow \infty,
\]
but their product is always larger than $N$, eventually converging to $N$. In this case some arcane phenomena could appear.
\par
It would be important to deepen the study of the regularity properties of extremals for $\mathfrak{m}_{s,p}(\mathbb{R}^N)$: for example, we expect extremals to be in $C^{0,\beta_{s,p}}_{\rm loc}(\mathbb{R}^N)$, with
\[
\beta_{s,p}=\frac{s\,p-N}{p-1}.
\]
Observe that this is the natural expected regularity, in light of the function
\[
x\mapsto |x|^\frac{s\,p-N}{p-1},
\]
which is supposed to be the fundamental solution\footnote{We try to shed some light on this point: it is a well-known fact that for $s\,p\not=N$ we have
\[
(-\Delta_p)^s|x|^\frac{s\,p-N}{p-1}=0,\qquad \mbox{ in } \mathbb{R}^N\setminus\{0\},
\]
in weak sense, see for example \cite[Theorem A.4]{BMS}, \cite[Theorem 1.1]{DQ} or \cite[Proposition 6.6]{Ta}. However, to the best of our knowledge, a proof of the fact that
\[
(-\Delta_p)^s|x|^\frac{s\,p-N}{p-1}=C_{N,p,s}\,\delta_0,\qquad \mbox{ in } \mathbb{R}^N,
\]
is still missing, except for the Hilbertian case $p=2$ (see \cite[Theorem 2.3]{Bu} and \cite[Theorem 8.4]{Ga}).} for $(-\Delta_p)^s$.
This would require a boosting of the regularity techniques presently available for fractional elliptic equations. Without any attempt of completeness, we mention \cite{Co, DKP, DKP2, IMS, KMS} and \cite{KMS2} for some important regularity results for equations driven by the fractional $p-$Laplacian.
Also, it would be interesting to prove regularity estimates for extremals which are {\it robust}, i.e. stable for $s\nearrow 1$ (in the vein of \cite[Theorem 1.5]{Brolin}). This could permit to improve our convergence result in Theorem \ref{thm:limits1} and to retrieve some informations on extremals for $s\sim 1$, from those of the local case.
\par
Finally, it would be interesting to prove some {\it global} regularity result for extremals. In this respect, we point out the recent concurrent paper \cite{Tav} by Tavakoli, studying the behaviour at infinity of extremals.

\subsection{Plan of the paper}

Section \ref{sec:2} contains the main notation used throughout the whole paper, together with a small ``bestiary'' listing all the relevant functional spaces needed in the sequel. We also collect some technical facts on fractional Sobolev and H\"older seminorms, which are useful for our needing. Section \ref{sec:3} still has a preparatory character: here we discuss, from a quantitative point of view, the fractional capacity of a point.
In Section \ref{sec:4} we introduce the constant $\Lambda_{s,p}$ and prove the cornerstone inequality (see Lemma \ref{lm:1stjuly}), which eventually leads to the proof of Morrey's inequality on the whole space. Section \ref{sec:5} is devoted to generalize to the fractional case the results of \cite[Section 6]{HS}. Here, the delicate point is to get the ``localized'' fractional Morrey inequality of Lemma \ref{lm:pratignano}. This is done by means of a different argument with respect to the case $s=1$, still based on the constant $\Lambda_{s,p}$. 
We then obtain in Section \ref{sec:6} existence of extremals for $\mathfrak{m}_{s,p}(\mathbb{R}^N)$. The subsequent Section \ref{sec:7} explores some of their regularity properties.
At last, Section \ref{sec:8} contains the results about the asymptotic behaviours of $\mathfrak{m}_{s,p}(\mathbb{R}^N)$. For ease of readability, in turn this is divided in three subsections, according to the relevant asymptotics considered. 
\par
The paper ends with two appendices: Appendix \ref{app:A} contains some (lengthy) computations for the Sobolev-Slobodecki\u{\i} seminorm of a suitable test function. These are expedient to prove the optimality of the asymptotic behaviour for $s\searrow N/p$. Finally, Appendix \ref{app:B} contains the characterization of a homogeneous Sobolev-Slobodecki\u{\i} space built on the multiply punctured space. 

\begin{ack}
We thank Erik Lindgren and Alireza Tavakoli for sharing a draft version of their papers \cite{GL} and \cite{Tav}, respectively. We wish to thank Francesca Bianchi and Giuseppe Mingione for some discussions on the Campanato-Meyers Theorem.
Part of this work has been done during a visit of L.\,B. to the University of Pisa in May 2023. We gratefully acknowledge the Department of Mathematics and its facilities. F.\,P. is a member of the {\it Gruppo Nazionale per l'Analisi Matematica, la Probabilit\`a
e le loro Applicazioni} (GNAMPA) of the Istituto Nazionale di Alta Matematica (INdAM).
\end{ack}

\section{Preliminaries}

\label{sec:2}

\subsection{Notation}

For $x_0\in\mathbb{R}^N$ and $R>0$, we indicate by $B_R(x_0)$ the $N-$dimensional open ball centered at $x_0$, with radius $R$. We will use the standard notation $\omega_N$ for the $N-$dimensional Lebesgue measure of $B_1(0)$. Occasionally, we will use the symbol $\mathcal{H}^k$ for the $k-$dimensional Hausdorff measure, with $k\in\mathbb{N}$.
\par
For every bounded measurable set $E\subseteq\mathbb{R}^N$ with positive measure, we set
\[
\mathrm{av}(\varphi;E)=\fint_E \varphi\,dx, \qquad \mbox{ for } \varphi\in L^1_{\rm loc}(\mathbb{R}^N).
\]
Fo a pair of open sets $E\subseteq \Omega\subseteq\mathbb{R}^N$, the symbol $E\Subset \Omega$ means that the closure $\overline{E}$ is a compact subset of $\Omega$. 

\subsection{Bestiary of functional spaces}

Let $\Omega\subseteq\mathbb{R}^N$ be an open set. We indicate by $C(\overline\Omega)$ the vector space of real valued continuous functions on $\overline\Omega$. In what follows, for every $0<\alpha\le 1$ and every open set $\Omega\subseteq\mathbb{R}^N$, we will need the {\it homogeneous H\"older space} defined by
\[
\dot C^{0,\alpha}(\overline\Omega)=\Big\{\varphi\in C(\overline\Omega)\, :\, [\varphi]_{C^{0,\alpha}(\overline\Omega)}<+\infty\Big\}.
\]
Observe that the elements of this space are {\it not} bounded functions, in general. We will reserve the more familiar symbol $C^{0,\alpha}(\overline\Omega)$ for the space
\[
C^{0,\alpha}(\overline\Omega)=\dot C^{0,\alpha}(\overline\Omega)\cap C_{\rm bound}(\overline\Omega),
\]
where $C_{\rm bound}(\overline\Omega)$ is the Banach space of continuous and bounded functions on $\overline\Omega$. 
The space $C^{0,\alpha}(\overline\Omega)$ will be endowed with the usual norm
\[
\|\varphi\|_{C^{0,\alpha}(\overline\Omega)}=\|\varphi\|_{L^\infty(\Omega)}+[\varphi]_{C^{0,\alpha}(\overline\Omega)},\qquad \mbox{ for every } \varphi\in C^{0,\alpha}(\overline\Omega).
\]
We observe that when $\Omega$ is bounded, the two spaces $\dot C^{0,\alpha}(\overline\Omega)$ and $C^{0,\alpha}(\overline\Omega)$ coincide as sets. In some places, we will also need the following symbol
\[
C_{\rm loc}^{0,\alpha}(\Omega)=\Big\{u:\Omega\to\mathbb{R} \mbox{ continuous }\, :\, [u]_{C^{0,\alpha}(\overline{B_r(x_0)})}<+\infty \mbox{ for every } B_r(x_0)\Subset \Omega \Big\}.
\]
Whenever $\Omega\not=\mathbb{R}^N$, we will also indicate by $C_0(\Omega)$ the completion of $C^\infty_0(\Omega)$ with respect to the sup norm. We recall that we have
\[
C_0(\Omega)\subseteq \{u\in C_{\rm bound}(\overline\Omega)\, :\, u=0 \mbox{ on }\partial\Omega\},
\]
see \cite[Lemma 2.1]{BPZ}.
For $1\le p<\infty$ and $0<s\le 1$, we define
\[
W^{s,p}(\Omega)=\Big\{\varphi\in L^p(\Omega)\, :\, [\varphi]_{W^{s,p}(\Omega)}<+\infty\Big\},
\]
endowed with the norm
\[
\|\varphi\|_{W^{s,p}(\Omega)}=\|\varphi\|_{L^p(\Omega)}+[\varphi]_{W^{s,p}(\Omega)},\qquad \mbox{ for every } \varphi\in W^{s,p}(\Omega).
\]
We also indicate by $\widetilde{W}^{s,p}_0(\Omega)$ the closure of $C^\infty_0(\Omega)$ in $W^{s,p}(\mathbb{R}^N)$. We will denote by $\mathscr{D}^{s,p}_0(\Omega)$ the completion of $C^\infty_0(\Omega)$ with respect to the norm
\[
\varphi\mapsto [\varphi]_{W^{s,p}(\mathbb{R}^N)}.
\]
The following space
\[
\mathcal{D}^{s,p}(\mathbb{R}^N)=\Big\{\varphi\in L^1_{\rm loc}(\mathbb{R}^N)\, :\, [\varphi]_{W^{s,p}(\mathbb{R}^N)}<+\infty\Big\},
\]
will be useful, as well. At last, one more fractional Sobolev space.
For every $N\ge 1$, $0<s\le 1$ and $1<p<\infty$ such that $s\,p>N$, we will frequently work with the following space
\begin{equation}
\label{goodspace}
\mathcal{W}^{s,p}(\mathbb{R}^N)=\Big\{\varphi\in \dot C^{0,\alpha_{s,p}}(\mathbb{R}^N)\, :\, [\varphi]_{W^{s,p}(\mathbb{R}^N)}<+\infty\Big\},
\end{equation}
where the exponent $\alpha_{s,p}$ is defined in \eqref{exponent}.
\begin{remark}[A family tree]
We will show that
\[
\mathcal{D}^{s,p}(\mathbb{R}^N)=\mathcal{W}^{s,p}(\mathbb{R}^N),\qquad \mbox{ for } s\,p>N,
\]
see Proposition \ref{prop:uguaspazi} below. In Proposition \ref{prop:homo}, we will also prove that
\[
\mathscr{D}^{s,p}_0(\mathbb{R}^N\setminus\{x_0\})=\Big\{u\in \mathcal{W}^{s,p}(\mathbb{R}^N)\, :\, u(x_0)=0\Big\},\qquad \mbox{ for }s\,p>N.
\]
We also recall that, still for $s\,p>N$, we have that the quotient space
\[
\frac{\mathcal{W}^{s,p}(\mathbb{R}^N)}{\sim_C}=:\dot W^{s,p}(\mathbb{R}^N),
\]
is isomorphic to the completion space $\mathscr{D}^{s,p}_0(\mathbb{R}^N)$ (see \cite[Theorem 4.4]{BGCV}). Here $\sim_C$ is the equivalence relation given by 
\[
u\sim_C v \qquad \Longleftrightarrow\qquad u-v \mbox{ is constant}.
\]
Finally, we recall that in general we have
\[
\widetilde{W}^{s,p}_0(\Omega)\not= \mathscr{D}^{s,p}_0(\Omega).
\]
\end{remark}

\subsection{Some technical facts}
The next technical result is a sort of H\"older inequality for fractional seminorms. Observe that the estimate below blows-up as $t\nearrow s$ and $q<p$ are fixed. This is in accordance with the well-known fact that $W^{s,q}(\Omega)$ {\it is not} embedded in $W^{s,p}(\Omega)$, in general (see \cite{MiSi}). The proof can be found in \cite[Lemma 4.6]{Co}.
\begin{lemma}
\label{lemma:embeddinglow}
Let $0<t<s<1$ and $1\le q<p<\infty$. Let $\Omega\subseteq\mathbb{R}^N$ be an open bounded set. Then for every $u\in W^{s,p}(\Omega)$ we have 
\[
[u]_{W^{t,q}(\Omega)}\le \left(\frac{N\,\omega_N\,|\Omega|}{s-t}\,\frac{p-q}{p\,q}\right)^\frac{p-q}{p\,q}\,\Big(\mathrm{diam}(\Omega)\Big)^{s-t}\,[u]_{W^{s,p}(\Omega)}.
\]
\end{lemma}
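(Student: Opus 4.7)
The statement is essentially a weighted Hölder inequality applied to the double integral defining the Gagliardo seminorm, so the plan is to split the integrand of $[u]_{W^{t,q}(\Omega)}^q$ as a product of two factors, with the first one being (a power of) the $W^{s,p}$ integrand and the second a suitable power of $|x-y|$. The only thing to check at the end is that the exponents balance to give exactly $(\mathrm{diam}\,\Omega)^{s-t}$, and that the constant matches the stated one.

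Concretely, I would write
\[
\frac{|u(x)-u(y)|^{q}}{|x-y|^{N+t\,q}}=\left(\frac{|u(x)-u(y)|^{p}}{|x-y|^{N+s\,p}}\right)^{\!q/p}\cdot |x-y|^{(s-t)\,q-N\,(p-q)/p},
\]
integrate over $\Omega\times\Omega$, and apply Hölder's inequality with conjugate exponents $p/q$ and $p/(p-q)$. This yields
\[
[u]_{W^{t,q}(\Omega)}^{q}\le [u]_{W^{s,p}(\Omega)}^{q}\,\left(\iint_{\Omega\times\Omega}|x-y|^{\alpha-N}\,dx\,dy\right)^{\!(p-q)/p},
\]
where I set $\alpha=(s-t)\,q\,p/(p-q)>0$, so that, after simplification, the exponent on $|x-y|$ inside the second integral is $\alpha-N$.

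For the remaining double integral I would use the elementary inclusion $\Omega\subseteq B_{\mathrm{diam}(\Omega)}(x)$ for every $x\in\Omega$ and pass to polar coordinates around $x$:
\[
\iint_{\Omega\times\Omega}|x-y|^{\alpha-N}\,dx\,dy\le |\Omega|\cdot N\,\omega_N\int_0^{\mathrm{diam}(\Omega)}r^{\alpha-1}\,dr=\frac{N\,\omega_N\,|\Omega|}{\alpha}\,\Big(\mathrm{diam}(\Omega)\Big)^{\alpha}.
\]
Substituting back $1/\alpha=(p-q)/\big((s-t)\,q\,p\big)$, taking the $q$-th root of both sides and raising the diameter factor to the power $\alpha\,(p-q)/(p\,q)=s-t$ gives exactly the claimed inequality with constant $\bigl(N\,\omega_N\,|\Omega|\,(p-q)/((s-t)\,p\,q)\bigr)^{(p-q)/(p\,q)}$.

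There is no real obstacle in this plan: everything reduces to bookkeeping of exponents. The only mildly delicate points are (i) ensuring that the $|x-y|$-power produced by the splitting is exactly integrable on $\Omega\times\Omega$, which uses $\alpha>0$, i.e. $s>t$, and (ii) noting that the bound blows up in the regime $t\nearrow s$ with $q<p$ fixed, consistently with the fact (remarked by the authors) that $W^{s,q}(\Omega)\not\hookrightarrow W^{s,p}(\Omega)$ in general, so no such inequality can survive at $t=s$.
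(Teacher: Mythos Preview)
Your proof is correct and is exactly the standard Hölder-inequality argument for this kind of embedding; the exponent bookkeeping checks out and the constant matches. The paper does not give its own proof but simply cites \cite[Lemma 4.6]{Co}, where the same approach is used.
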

\begin{lemma}
\label{lemma:2}
Let $0<t<1$ and let $\Omega\subseteq\mathbb{R}^N$ be an open set. Let $u\in W^{t,q}(\Omega)$ for every $1\le q<\infty$, such that
\[
L:=\liminf_{q\to \infty} [u]_{W^{t,q}(\Omega)}<+\infty.
\]
Then $u\in \dot C^{0,t}(\overline\Omega)$ and $[u]_{C^{0,t}(\overline\Omega)}\le L$, in the sense that there exists $\widetilde{u}\in \dot C^{0,t}(\overline\Omega)$ such that $u=\widetilde{u}$ almost everywhere.
\end{lemma}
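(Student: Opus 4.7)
The plan is to show a pointwise H\"older bound $|u(x_0)-u(y_0)|\le L\,|x_0-y_0|^t$ holds at every pair of Lebesgue points of $u$, and then obtain the continuous representative $\widetilde{u}$ by density in $\overline\Omega$. The underlying principle is that the $W^{t,q}$-seminorm is morally an $L^q$-norm of the H\"older difference quotient with respect to the measure $|x-y|^{-N}dx\,dy$, so sending $q\to\infty$ should recover the essential supremum, i.e.\ the H\"older seminorm.

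Fix two distinct Lebesgue points $x_0,y_0\in\Omega$ of $u$, and choose $\rho>0$ small enough so that the closed balls $\overline{B_\rho(x_0)}$ and $\overline{B_\rho(y_0)}$ are contained in $\Omega$ and disjoint (i.e.\ $\rho<|x_0-y_0|/2$). For every $x\in B_\rho(x_0)$ and $y\in B_\rho(y_0)$ one has
\[
0 < |x_0-y_0|-2\rho \le |x-y| \le |x_0-y_0|+2\rho,
\]
so by multiplying and dividing by $|x-y|^{N+tq}$ inside the integral,
\[
\int_{B_\rho(x_0)}\int_{B_\rho(y_0)}|u(x)-u(y)|^q\,dx\,dy \le (|x_0-y_0|+2\rho)^{N+tq}\,[u]_{W^{t,q}(\Omega)}^q.
\]
Dividing by $(\omega_N\,\rho^N)^2$ to turn the integrals into averages, extracting the $q$-th root, and using Jensen's inequality to bring the $q$-th power outside the average, I obtain
\[
\fint_{B_\rho(x_0)}\fint_{B_\rho(y_0)}|u(x)-u(y)|\,dx\,dy \le \frac{(|x_0-y_0|+2\rho)^{t+N/q}}{(\omega_N\,\rho^N)^{2/q}}\,[u]_{W^{t,q}(\Omega)}.
\]

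Now I take $\liminf_{q\to\infty}$ on the right-hand side. With $\rho$ and $x_0,y_0$ fixed, the factor $(|x_0-y_0|+2\rho)^{t+N/q}$ converges to $(|x_0-y_0|+2\rho)^t$ (strictly positive and finite) and $(\omega_N\,\rho^N)^{2/q}$ converges to $1$, so
\[
\fint_{B_\rho(x_0)}\fint_{B_\rho(y_0)}|u(x)-u(y)|\,dx\,dy \le L\,(|x_0-y_0|+2\rho)^t.
\]
Letting $\rho\to 0^+$ and invoking the Lebesgue differentiation theorem at the two approximate continuity points $x_0$ and $y_0$, the left-hand side converges to $|u(x_0)-u(y_0)|$, producing the desired pointwise H\"older bound. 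Since the set of Lebesgue points of $u$ has full measure in $\Omega$ and is in particular dense in $\overline\Omega$, the restriction of $u$ to this set is a $t$-H\"older continuous map with constant $\le L$; hence it extends uniquely to a continuous function $\widetilde u\in \dot C^{0,t}(\overline\Omega)$ with the same constant, and $\widetilde u=u$ almost everywhere. The main obstacle is essentially bookkeeping: the $q$-dependent prefactors in the key estimate must both be shown to tend to $1$ for fixed $\rho$, and one must be careful to take $\rho$ smaller than $|x_0-y_0|/2$ so that disjointness provides the positive lower bound on $|x-y|$ that legitimizes the multiplication-division step.
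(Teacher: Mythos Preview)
Your proof is correct. The paper takes a slightly more streamlined route: it fixes $\varepsilon>0$, restricts the double integral to the set
\[
\mathcal{O}_\varepsilon=\{(x,y)\in\Omega\times\Omega:\ |x-y|\ge\varepsilon\}\cap\big(B_{1/\varepsilon}(0)\times B_{1/\varepsilon}(0)\big),
\]
on which the measure $|x-y|^{-N}\,dx\,dy$ is finite, and then observes that the $L^q$ norm of $(x,y)\mapsto |u(x)-u(y)|/|x-y|^t$ with respect to this measure converges to its essential supremum as $q\to\infty$; sending $\varepsilon\to 0$ yields $\mathrm{ess\,sup}_{\,\Omega\times\Omega}\,|u(x)-u(y)|/|x-y|^t\le L$. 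Your argument exploits the same underlying principle but reads it off pointwise at Lebesgue points, via averages over small balls, rather than via the abstract $L^q\to L^\infty$ convergence on a finite-measure subdomain. The paper's version is essentially a one-liner but leaves the passage from an essential-supremum bound to a genuine continuous representative implicit; your version has the merit of making that step explicit through the Lebesgue-point extension.
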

\begin{proof}
Let us fix $0<\varepsilon\ll 1$ and set 
\[
\mathcal{O}_\varepsilon:=\Big\{(x,y)\in\Omega\times\Omega\, :\, |x-y|\ge \varepsilon\Big\}\cap \left(B_\frac{1}{\varepsilon}(0)\times B_\frac{1}{\varepsilon}(0)\right).
\]
We then have 
\[
\begin{split}
L=\liminf_{q\to \infty} [u]_{W^{t,q}(\Omega)}&\ge \liminf_{q\to\infty} \left(\iint_{\mathcal{O}_\varepsilon}\frac{|u(x)-u(y)|^q}{|x-y|^{t\,q}}\,\frac{dx\,dy}{|x-y|^{N}}\right)^\frac{1}{q}\\
&=\lim_{q\to\infty} \left\|\frac{u(x)-u(y)}{|x-y|^t}\right\|_{L^q(\mathcal{O}_\varepsilon;|x-y|^{-N})}=\sup_{(x,y)\in \mathcal{O}_\varepsilon} \frac{|u(x)-u(y)|}{|x-y|^t}.
\end{split}
\]
By arbitrariness of $\varepsilon>0$, we get the conclusion.
\end{proof}
\begin{lemma} 
Let $\Omega\subseteq \mathbb{R}^N$ be an open set. Then, 
if $0<\beta<\beta'\leq 1$  it holds
\begin{equation}
\label{normeholder2}
\lim_{\alpha \searrow \beta} [u]_{C^{0,\alpha}(\overline\Omega)}=[u]_{C^{0,\beta}(\overline\Omega)}, \qquad \hbox{ for  every }  u\in \dot C^{0,\beta'}(\overline\Omega)\cap \dot C^{0,\beta}(\overline\Omega). 
\end{equation}
Moreover, for every $u\in \dot C^{0,\beta}(\overline\Omega)$ such that 
\[
[u]_{W^{\beta,q}(\overline\Omega)}<+\infty,\qquad \mbox{ for some } 1\le q<\infty,
\]
we have that
\begin{equation}
\label{normegagliardo1}
\lim_{p\to \infty }[u]_{W^{\beta,p}(\Omega)} =[u]_{C^{0,\beta}(\overline\Omega)}.
\end{equation}
\end{lemma}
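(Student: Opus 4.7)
For \eqref{normeholder2}, my plan is to combine a pointwise interpolation inequality with a point-testing argument. For $\alpha\in(\beta,\beta')$ write $\alpha=\theta\,\beta+(1-\theta)\,\beta'$ with $\theta=(\beta'-\alpha)/(\beta'-\beta)\in(0,1)$ and note the pointwise identity
\[
\frac{|u(x)-u(y)|}{|x-y|^\alpha}=\left(\frac{|u(x)-u(y)|}{|x-y|^\beta}\right)^{\theta}\left(\frac{|u(x)-u(y)|}{|x-y|^{\beta'}}\right)^{1-\theta}.
\]
Taking the supremum over $x\ne y$ in $\overline\Omega$ yields $[u]_{C^{0,\alpha}(\overline\Omega)}\le [u]_{C^{0,\beta}(\overline\Omega)}^{\theta}\,[u]_{C^{0,\beta'}(\overline\Omega)}^{1-\theta}$. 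Letting $\alpha\searrow\beta$ forces $\theta\to1$, and since both Hölder seminorms on the right are finite by hypothesis, this gives $\limsup_{\alpha\searrow\beta}[u]_{C^{0,\alpha}(\overline\Omega)}\le[u]_{C^{0,\beta}(\overline\Omega)}$. For the matching $\liminf$, I would fix $\varepsilon>0$, pick $x_\varepsilon,y_\varepsilon\in\overline\Omega$ with $|u(x_\varepsilon)-u(y_\varepsilon)|/|x_\varepsilon-y_\varepsilon|^\beta\ge [u]_{C^{0,\beta}(\overline\Omega)}-\varepsilon$, and use the trivial limit $|x_\varepsilon-y_\varepsilon|^{\beta-\alpha}\to1$ as $\alpha\searrow\beta$ to pass through the fixed test pair; then $\varepsilon\to 0$.

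For \eqref{normegagliardo1}, my plan is to interpret
\[
[u]_{W^{\beta,p}(\Omega)}^p=\int_{\Omega\times\Omega}\left|\frac{u(x)-u(y)}{|x-y|^\beta}\right|^p\,\frac{dx\,dy}{|x-y|^N}
\]
as the $L^p$ norm of $f(x,y):=(u(x)-u(y))/|x-y|^\beta$ with respect to the $\sigma$-finite measure $d\mu:=|x-y|^{-N}\,dx\,dy$ on $\Omega\times\Omega$. Since $u\in\dot C^{0,\beta}(\overline\Omega)$ is continuous and the density of $\mu$ is strictly positive off the Lebesgue-null diagonal, one has $\|f\|_{L^\infty(d\mu)}=[u]_{C^{0,\beta}(\overline\Omega)}$. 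The $\limsup$ inequality follows from the elementary interpolation
\[
\|f\|_{L^p(d\mu)}^p\le \|f\|_{L^\infty(d\mu)}^{p-q}\,\|f\|_{L^q(d\mu)}^q,\qquad p>q,
\]
using the hypothesis $[u]_{W^{\beta,q}(\Omega)}<\infty$ to ensure $f\in L^q(d\mu)$. For the matching $\liminf$, I would mimic the scheme already used in Lemma \ref{lemma:2}: restrict the integral to
\[
\mathcal{O}_\varepsilon:=\big\{(x,y)\in\Omega\times\Omega\,:\,|x-y|\ge\varepsilon\big\}\cap\big(B_{1/\varepsilon}(0)\times B_{1/\varepsilon}(0)\big),
\]
on which $\mu$ is a \emph{finite} measure, apply the classical $L^p\to L^\infty$ convergence on finite measure spaces, and then take the supremum over $\varepsilon>0$; the continuity of $u$ in $\overline\Omega$ ensures that the resulting supremum reconstructs $[u]_{C^{0,\beta}(\overline\Omega)}$.

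The only technical point requiring care is the diagonal singularity of the measure $d\mu$: the textbook statement ``$\|f\|_{L^p}\to \|f\|_{L^\infty}$'' is usually phrased for finite measures, so the localization away from the diagonal and from infinity via $\mathcal{O}_\varepsilon$ is essential. This is precisely why the assumption $[u]_{W^{\beta,q}(\Omega)}<+\infty$ for some $q$ enters: it provides an $L^q(d\mu)$ foothold that compensates for the infinite total mass of $\mu$ and makes the interpolation bound meaningful. Apart from this, both statements reduce to the pattern already established in Lemma \ref{lemma:2}.
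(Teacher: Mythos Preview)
Your proposal is correct and follows essentially the same route as the paper: for \eqref{normeholder2} both use the pointwise interpolation $[u]_{C^{0,\alpha}}\le [u]_{C^{0,\beta}}^{\theta}[u]_{C^{0,\beta'}}^{1-\theta}$ for the $\limsup$ and a point-testing argument for the $\liminf$ (the paper fixes an arbitrary pair $x,y$ rather than an $\varepsilon$-near-maximizer, but this is the same idea); for \eqref{normegagliardo1} both use the interpolation $[u]_{W^{\beta,p}}^p\le [u]_{C^{0,\beta}}^{p-q}[u]_{W^{\beta,q}}^q$ for the $\limsup$ and invoke Lemma~\ref{lemma:2} for the $\liminf$.
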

\begin{proof} 
In order to show \eqref{normeholder2}, we first observe that for every pair $x,y\in\overline\Omega$ of distinct points, we have
\[
\frac{|u(x)-u(y)|}{|x-y|^\beta}=\lim_{\alpha\to \beta }\frac{|u(x)-u(y)|}{|x-y|^\alpha}\le \liminf_{\alpha\to \beta} [u]_{C^{0,\alpha}(\overline\Omega)}.
\] 
This in turn gives
\begin{equation}
\label{liminf_holder}
[u]_{C^{0,\beta}(\overline\Omega)}\le \liminf_{\alpha\to \beta} [u]_{C^{0,\alpha}(\overline\Omega)}.
\end{equation}
This holds in both cases $\alpha\nearrow \beta$ and $\alpha\searrow \beta$.
For the $\limsup$ inequality, we can use a simple interpolation estimate: observe that for every $\beta<\alpha<\beta'$ and every pair $x,y\in\overline\Omega$ of distinct points, we have
\[
\frac{|u(x)-u(y)|}{|x-y|^\alpha}\le \left(\frac{|u(x)-u(y)|}{|x-y|^\beta}\right)^\frac{\beta'-\alpha}{\beta'-\beta}\,\left(\frac{|u(x)-u(y)|}{|x-y|^{\beta'}}\right)^\frac{\alpha-\beta}{\beta'-\beta}.
\]
This yields 
\[
[u]_{C^{0,\alpha}(\overline\Omega)}\le \left([u]_{C^{0,\beta}(\overline\Omega)}\right)^\frac{\beta'-\alpha}{\beta'-\beta}\,\left([u]_{C^{0,\beta'}(\overline\Omega)}\right)^\frac{\alpha-\beta}{\beta'-\beta},
\]
from which the desired $\limsup$ inequality easily follows.
\vskip.2cm\noindent
In order to show \eqref{normegagliardo1}, let $u\in \dot C^{0,\beta}(\overline{\Omega})$ be such that $[u]_{W^{\beta,q}(\Omega)}<+\infty$ for some finite $q\ge 1$. We can resort to an interpolation inequality, here as well. For every $p> q$, we have
\[
\begin{split}
[u]_{W^{\beta,p}(\Omega)}^p=\iint_{\Omega\times\Omega} \frac{|u(x)-u(y)|^p}{|x-y|^{N+\beta\,p}}\,dx\,dy&=\iint_{\Omega\times\Omega} \left(\frac{|u(x)-u(y)|}{|x-y|^\beta}\right)^{p-q}\,\frac{|u(x)-u(y)|^q}{|x-y|^{N+\beta\,q}}\,dx\,dy\\
&\le \left([u]_{C^{0,\beta}(\overline\Omega)}\right)^{p-q} \, [u]_{W^{\beta,q}(\Omega)}^{q}.
\end{split}
\]
By raising to the power $1/p$ and taking the limit as $p$ goes to $\infty$, we get
\[
\limsup_{p\to\infty} [u]_{W^{\beta,p}(\Omega)}\le [u]_{C^{0,\beta}(\overline\Omega)}.
\]
The $\liminf$ inequality follows from Lemma \ref{lemma:2}.
\end{proof}
\begin{remark}
\label{below}
If we assume $u\in C^{0,\beta}(\overline\Omega)$, then one can get that 
\[
\lim_{\alpha \nearrow \beta} [u]_{C^{0,\alpha}(\overline\Omega)}=[u]_{C^{0,\beta}(\overline\Omega)},
\]
for every open set $\Omega\subseteq\mathbb{R}^N$. In light of \eqref{liminf_holder}, it sufficient to prove
\[
\limsup_{\alpha \nearrow \beta} [u]_{C^{0,\alpha}(\overline\Omega)}\le [u]_{C^{0,\beta}(\overline\Omega)}.
\]
For this, it is sufficient to note that for every $x\not=y$ we have
\[
\frac{|u(x)-u(y)|}{|x-y|^\alpha}=\left(\frac{|u(x)-u(y)|}{|x-y|^\beta}\right)^\frac{\alpha}{\beta}\,|u(x)-u(y)|^\frac{\beta-\alpha}{\beta}\leq \left(\frac{|u(x)-u(y)|}{|x-y|^\beta}\right)^\frac{\alpha}{\beta}\, \left(2\, \|u\|_{L^\infty(\Omega)}\right)^\frac{\beta-\alpha}{\beta},
\]
which entails that for every $0<\alpha<\beta\le 1$
\begin{equation}
\label{needed}
[u]_{C^{0,\alpha}(\overline\Omega)}\le \left(2\, \|u\|_{L^\infty(\Omega)}\right)^\frac{\beta-\alpha}{\beta}\,[u]_{C^{0,\beta}(\overline\Omega)}.
\end{equation}
Observe that we are now assuming $u\in C^{0,\beta}(\overline\Omega)$, thus $u$ is bounded.
\end{remark}
Finally, we need the following simple $\Gamma-$convergence--type result.
\begin{lemma}
\label{lemma:gamma-conv}
Let $0<s<1$ and let $\Omega\subseteq\mathbb{R}^N$ be an open bounded set. Let $\{u_n\}_{n\in\mathbb{N}}\subseteq C(\overline\Omega)$ be such that 
\[
u_n\in W^{s,p_n}(\Omega)\qquad \mbox{ and }\qquad \liminf_{n\to\infty} [u_n]_{W^{s,p_n}(\Omega)}<+\infty,
\]
where $\{p_n\}_{n\in\mathbb{N}}\subseteq (1,+\infty)$ is an increasingly diverging sequence.
Let us suppose that $\{u_n\}_{n\in\mathbb{N}}$ converges pointwise on $\overline\Omega$ to a function $u\in C(\overline{\Omega})$. Then $u\in C^{0,s}(\overline\Omega)$ and 
\[
[u]_{C^{0,s}(\overline\Omega)}\le \liminf_{n\to\infty} [u_n]_{W^{s,p_n}(\Omega)}.
\]
\end{lemma}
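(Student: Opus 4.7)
The plan is to reduce the $L^{p_n}$ control of $[u_n]_{W^{s,p_n}(\Omega)}$ to an $L^\infty$-type bound on the difference quotient of the limit $u$ by an interpolation against a \emph{finite} auxiliary measure. Following the strategy of Lemma \ref{lemma:2}, for every $\varepsilon>0$ I set
\[
\mathcal{O}_\varepsilon:=\Big\{(x,y)\in \Omega\times\Omega\,:\,|x-y|\ge \varepsilon\Big\},\qquad d\mu(x,y):=\frac{dx\,dy}{|x-y|^N},
\]
so that $\mu(\mathcal{O}_\varepsilon)<+\infty$ (as $\Omega$ is bounded and $|x-y|\ge \varepsilon$ on $\mathcal{O}_\varepsilon$), and introduce
\[
f_n(x,y):=\frac{u_n(x)-u_n(y)}{|x-y|^s},\qquad f(x,y):=\frac{u(x)-u(y)}{|x-y|^s}.
\]
With this notation, $[u_n]_{W^{s,p_n}(\Omega)}=\|f_n\|_{L^{p_n}(\Omega\times\Omega,d\mu)}$, and the pointwise convergence $u_n\to u$ on $\overline\Omega$ yields pointwise convergence $f_n\to f$ on $\mathcal{O}_\varepsilon$.

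Now I fix an arbitrary exponent $r\ge 1$. For every $n$ with $p_n\ge r$, a standard application of H\"older's inequality on the finite measure space $(\mathcal{O}_\varepsilon,\mu)$ gives
\[
\|f_n\|_{L^r(\mathcal{O}_\varepsilon,\mu)}\le \mu(\mathcal{O}_\varepsilon)^{\frac{1}{r}-\frac{1}{p_n}}\,\|f_n\|_{L^{p_n}(\mathcal{O}_\varepsilon,\mu)}\le \mu(\mathcal{O}_\varepsilon)^{\frac{1}{r}-\frac{1}{p_n}}\,[u_n]_{W^{s,p_n}(\Omega)}.
\]
Combining Fatou's lemma with the fact that $\mu(\mathcal{O}_\varepsilon)^{\frac{1}{r}-\frac{1}{p_n}}\to \mu(\mathcal{O}_\varepsilon)^{\frac{1}{r}}$, taking $\liminf$ in $n$ yields
\[
\|f\|_{L^r(\mathcal{O}_\varepsilon,\mu)}\le \mu(\mathcal{O}_\varepsilon)^{\frac{1}{r}}\,L,\qquad L:=\liminf_{n\to\infty}[u_n]_{W^{s,p_n}(\Omega)}.
\]

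Dividing by $\mu(\mathcal{O}_\varepsilon)^{1/r}$ and sending $r\to\infty$ on the finite measure space $(\mathcal{O}_\varepsilon,\mu)$, the left-hand side converges to the essential supremum of $|f|$ on $\mathcal{O}_\varepsilon$; since $u\in C(\overline\Omega)$, the function $f$ is continuous on $\mathcal{O}_\varepsilon$ and this essential supremum coincides with the pointwise one. Hence
\[
\sup_{\substack{x,y\in\Omega\\|x-y|\ge \varepsilon}}\frac{|u(x)-u(y)|}{|x-y|^s}\le L.
\]
Letting $\varepsilon\searrow 0$ and extending the supremum to $\overline\Omega$ by the continuity of $u$ yields $[u]_{C^{0,s}(\overline\Omega)}\le L$, as claimed.

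The main delicate point is that, unlike in Lemma \ref{lemma:2}, both the integrands $f_n$ and the integration exponents $p_n$ vary with $n$, so a naive passage to the limit cannot handle the two simultaneously. Freezing an intermediate exponent $r<p_n$ and invoking H\"older against the finite measure $\mu$ on $\mathcal{O}_\varepsilon$ is precisely what allows us to first pass to the limit in $n$ via Fatou and only afterwards send $r\to\infty$, thereby decoupling the two limits safely.
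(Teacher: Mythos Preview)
Your proof is correct, but it follows a somewhat different route from the paper's. The paper fixes an auxiliary pair $(t,q)$ with $0<t<s$ and $1\le q<\infty$, uses the embedding Lemma~\ref{lemma:embeddinglow} to control $[u_n]_{W^{t,q}(\Omega)}$ by $[u_n]_{W^{s,p_n}(\Omega)}$, passes to the limit in $n$ via Fatou, then sends $q\to\infty$ (invoking Lemma~\ref{lemma:2}) to get $[u]_{C^{0,t}(\overline\Omega)}\le (\mathrm{diam}\,\Omega)^{s-t}\,L$, and finally lets $t\nearrow s$. You instead stay at the level $s$ throughout: you truncate away a neighbourhood of the diagonal to make the weight $|x-y|^{-N}$ a finite measure, freeze an auxiliary exponent $r$, and use plain H\"older on this finite measure space before applying Fatou and sending $r\to\infty$, $\varepsilon\searrow 0$. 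Your argument is more self-contained---it needs neither Lemma~\ref{lemma:embeddinglow} nor the detour through a lower smoothness index $t<s$---at the price of introducing the diagonal cutoff $\varepsilon$. Both proofs hinge on the same idea: decoupling the limit in $n$ from the limit that produces the $L^\infty$ bound by first freezing an intermediate parameter.
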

\begin{proof}
Let us fix $0<t<s$ and $1\le q<\infty$. From Lemma \ref{lemma:embeddinglow}, we have 
\[
\liminf_{n\to\infty}[u_n]_{W^{t,q}(\Omega)}\le \left(\frac{N\,\omega_N\,|\Omega|}{s-t}\,\frac{1}{q}\right)^\frac{1}{q}\,\Big(\mathrm{diam}(\Omega)\Big)^{s-t}\,\liminf_{n\to\infty}[u_n]_{W^{s,p_n}(\Omega)}.
\]
By Fatou's Lemma, we also have 
\[
\liminf_{n\to\infty}[u_n]_{W^{t,q}(\Omega)}\ge [u]_{W^{t,q}(\Omega)}.
\]
This shows that $u\in W^{t,q}(\Omega)$ for every $0<t<s$ and every $1\le q<\infty$. Moreover, we have the estimate
\[
\limsup_{q\to\infty}[u]_{W^{t,q}(\Omega)}\le \Big(\mathrm{diam}(\Omega)\Big)^{s-t}\,\liminf_{n\to\infty}[u_n]_{W^{s,p_n}(\Omega)}.
\]
By Lemma \ref{lemma:2}, this implies that $u\in C^{0,t}(\overline\Omega)$, with
\[
[u]_{C^{0,t}(\overline\Omega)}\le \Big(\mathrm{diam}(\Omega)\Big)^{s-t}\,\liminf_{n\to\infty}[u_n]_{W^{s,p_n}(\Omega)}.
\]
Since this is valid for every $t<s$, we now easily get the desired conclusion.
\end{proof}

\subsection{Hardy's inequality on the punctured space}
According to \cite[Theorem 1.1]{FS}, we have the fractional Hardy inequality for functions on $\mathbb{R}^N\setminus\{0\}$. More precisely, for $N\ge 1$, $0<s<1$ and $1\le p<\infty$ such that $s\,p\not=N$, we have
\[
\mathfrak{h}_{s,p}(\mathbb{R}^N\setminus\{0\})\,\int_{\mathbb{R}^N} \frac{|\varphi|^p}{|x|^{s\,p}}\,dx\le [\varphi]_{W^{s,p}(\mathbb{R}^N)}^p,\qquad \mbox{ for every } \varphi\in \mathscr{D}^{s,p}_0(\mathbb{R}^N\setminus\{0\}).
\]
The sharp constant $\mathfrak{h}_{s,p}(\mathbb{R}^N\setminus\{0\})>0$ is computed in \cite{FS}. In order to analyze the asymptotic behaviour of the sharp constant in our Morrey's inequality, the following simple result will be useful. An analogous estimate for the case $s\,p<N$ was obtained in \cite[Theorem 2]{MS}, without prior knowledge of the sharp constant.
\begin{lemma}
\label{lemma:fastidio}
Let $N\ge 1$, $0<s<1$ and $1<p<\infty$ be such that $s\,p>N$. Then we have
\[
\mathfrak{h}_{s,p}(\mathbb{R}^N\setminus\{0\})\ge C\, \frac{(s\,p-N)^p}{(1-s)},
\]
for a constant $C=C(N,p)>0$.
\end{lemma}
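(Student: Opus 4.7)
\medskip
\noindent\emph{Proof proposal.} The natural route is to exploit the explicit expression for the sharp constant $\mathfrak{h}_{s,p}(\mathbb{R}^N\setminus\{0\})$ computed by Frank and Seiringer in \cite{FS} and extract the asymptotic behaviour. Setting $\gamma:=(sp-N)/p>0$, their derivation (via a nonlinear ground state representation with the power function $w(x)=|x|^{-(N-sp)/p}$) expresses $\mathfrak{h}_{s,p}(\mathbb{R}^N\setminus\{0\})$ as a one-dimensional integral of the form
\[
\mathfrak{h}_{s,p}(\mathbb{R}^N\setminus\{0\}) \,=\, 2\int_0^1 \bigl|1-t^{\gamma}\bigr|^p\, \Phi_{N,s,p}(t)\, dt,
\]
where $\Phi_{N,s,p}$ is a positive, explicit kernel obtained by reducing the Gagliardo kernel to radial variables and averaging over the sphere. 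The task reduces to bounding this integral below by $C\,(sp-N)^p/(1-s)$.

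The first step isolates the factor $(sp-N)^p$. Restricting the integration to $t\in(0,1/2)$, an elementary convexity estimate gives $1-t^{\gamma}\ge 1-2^{-\gamma}\ge c_{p}\,\gamma$ uniformly in $\gamma\in(0,1-N/p]$ (the relevant range, since $s\le 1$). Raising to the $p$-th power produces $|1-t^{\gamma}|^p\ge c_p\,\gamma^p=c_p\,(sp-N)^p/p^p$ on this interval, which is the desired $(sp-N)^p$ factor pulled out from under the integral.

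The second step tracks the blow-up in $s$. It amounts to showing
\[
\int_0^{1/2} \Phi_{N,s,p}(t)\,dt \,\ge\, \frac{c(N,p)}{1-s},
\]
which is precisely the rate predicted by the Bourgain-Brezis-Mironescu formula recalled in the introduction: the Gagliardo seminorm of any fixed smooth radial function concentrated near $|x|\in(0,1/2)$ blows up like $(1-s)^{-1}$, and this is exactly what the kernel $\Phi_{N,s,p}$ encodes. Concretely, on the range $t\in(0,1/2)$ the kernel admits the pointwise lower bound (after undoing the spherical average)
\[
\Phi_{N,s,p}(t)\,\ge\, c(N,p)\, t^{N-1}\int_{S^{N-1}}\frac{d\sigma(\omega)}{(1+t^2-2t\,\omega_1)^{(N+sp)/2}}\cdot(1-t)^{-1-sp},
\]
whose integral against $t$ can be computed explicitly and yields $c(N,p)/(1-s)$ through the usual change of variable $\tau=1-t$ and the Taylor expansion of the integrand near $\tau=0$.

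Combining the two estimates gives the claim with $C=C(N,p)>0$. The main obstacle will be the bookkeeping in the second step, namely extracting the clean $(1-s)^{-1}$ behaviour from the specific form of $\Phi_{N,s,p}$; should a direct manipulation of the Frank-Seiringer kernel prove too delicate, an equivalent route is to run the ground state representation of \cite{FS} directly, testing against $w(x)=|x|^{(sp-N)/p}$ and using the elementary bound on $|1-t^\gamma|^p$ together with an explicit estimate of the radial Gagliardo integral that blows up as $s\nearrow 1$ at rate $(1-s)^{-1}$, thereby bypassing the explicit formula altogether.
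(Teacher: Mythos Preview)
Your overall strategy---start from the Frank--Seiringer formula, pull out the factor $(sp-N)^p$ from $|1-t^\gamma|^p$, then show the remaining integral diverges like $(1-s)^{-1}$---is exactly what the paper does. However, your execution has a genuine gap that makes the argument fail.

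The problem is in your Step~1: you restrict the integration to $t\in(0,1/2)$ in order to bound $1-t^\gamma\ge 1-2^{-\gamma}\ge c_p\,\gamma$. But the $(1-s)^{-1}$ divergence of the kernel $\Phi_{N,s,p}$ lives entirely near $t=1$, not on $(0,1/2)$. Indeed, for $t\in(0,1/2)$ one has $1-2t\tau+t^2\ge(1-t)^2\ge 1/4$, so the spherical integral defining $\Phi_{N,s,p}(t)$ is bounded by a constant depending only on $N$ and $p$, uniformly in $s$. Hence $\int_0^{1/2}\Phi_{N,s,p}(t)\,dt\le C(N,p)$ stays bounded as $s\nearrow 1$, and your claimed inequality $\int_0^{1/2}\Phi_{N,s,p}(t)\,dt\ge c(N,p)/(1-s)$ is false. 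Your BBM heuristic is also misplaced: the BBM divergence is a near-diagonal phenomenon, which in the radial ratio variable corresponds to $t\approx 1$, precisely the region you discarded.

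The paper's fix is simple but essential: instead of restricting the interval, use the concavity bound $1-r^{\gamma}\ge\gamma\,(1-r)$ valid on all of $(0,1)$. This introduces an extra factor $(1-r)^p$ under the integral, which combines with the estimate $\Phi_{N,sp}(r)\ge c(N,p)\,(1-r)^{-1-sp}$ near $r=1$ to give an integrand of order $(1-r)^{p-1-sp}$; integrating this over $(1/2,1)$ yields $\frac{1}{p(1-s)}(1/2)^{p(1-s)}\ge c(N,p)/(1-s)$, as desired.
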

\begin{proof}
From \cite[Theorem 1.1]{FS}, we know that 
\[
\mathfrak{h}_{s,p}(\mathbb{R}^N\setminus\{0\})=2\,\int_0^1 r^{s\,p-1}\,\left|1-r^\frac{N-s\,p}{p}\right|^p\,\Phi_{N,s\,p}(r)\,dr,
\]
where for $\alpha\ge 0$ and $0<r<1$, the quantity $\Phi_{N,\alpha}(r)$ is given by
\begin{equation}
\label{phistrange}
\Phi_{N,\alpha}(r)=|\mathbb{S}^{N-2}|\,\int_{-1}^1 \frac{(1-t^2)^\frac{N-3}{2}}{(1-2\,t\,r+r^2)^\frac{N+s\,p}{2}}\,dt,\qquad \mbox{ for }N\ge 2,
\end{equation}
and
\[
\Phi_{1,\alpha}(r)=\left(\frac{1}{(1-r)^{1+\alpha}}+\frac{1}{(1+r)^{1+\alpha}}\right).
\]
With simple manipulations, we first rewrite the sharp constant as follows 
\[
\begin{split}
\mathfrak{h}_{s,p}(\mathbb{R}^N\setminus\{0\})
&=2\,\int_0^1 r^{N-1}\,\left(1-r^\frac{s\,p-N}{p}\right)^p\,\Phi_{N,s\,p}(r)\,dr.
\end{split}
\]
By using that $\tau\mapsto \tau^{(s\,p-N)/p}$ is concave, we get
\[
1-r^\frac{s\,p-N}{p}\ge\frac{s\,p-N}{p}\,(1-r),\qquad \mbox{ for }0<r<1.
\]
This yields
\begin{equation}
\label{h1}
\mathfrak{h}_{s,p}(\mathbb{R}^N\setminus\{0\})\ge \left(\frac{s\,p-N}{p}\right)^p\,\int_0^1 r^{N-1}\,(1-r)^p\,\Phi_{N,s\,p}(r)\,dr.
\end{equation}
We are left with estimating $\Phi_{N,s\,p}$. Let us start with the case $N\ge 2$. With simple manipulations, we can rewrite it as follows
\[
\Phi_{N,s\,p}(r)=|\mathbb{S}^{N-2}|\,\left[\int_{0}^1 \frac{(1-t^2)^\frac{N-3}{2}}{(1-2\,t\,r+r^2)^\frac{N+s\,p}{2}}\,dt+\int_{0}^1 \frac{(1-t^2)^\frac{N-3}{2}}{(1+2\,t\,r+r^2)^\frac{N+s\,p}{2}}\,dt\right].
\]
By dropping the second integral and observing that for $t\in[0,1]$
\[
1-2\,t\,r+r^2=(1-r)^2+2\,r\,(1-t)\qquad \mbox{ and }\qquad (1-t^2)^\frac{N-3}{2}\ge \sqrt{\frac{1}{2}}\,(1-t)^\frac{N-3}{2},
\]
we get
\[
\Phi_{N,s\,p}(r)\ge \sqrt{\frac{1}{2}}\,|\mathbb{S}^{N-2}|\,\frac{1}{(1-r)^{N+s\,p}}\,\int_{0}^1 \frac{(1-t)^\frac{N-3}{2}}{\left(1+2\,r\,\dfrac{1-t}{(1-r)^2}\right)^\frac{N+s\,p}{2}}\,dt.
\]
For every $0<r<1$, we perform the change of variable $(1-t)/(1-r)^2=\tau$, which gives
\[
\Phi_{N,s\,p}(r)\ge \sqrt{\frac{1}{2}}\,|\mathbb{S}^{N-2}|\,\frac{1}{(1-r)^{1+s\,p}}\,\int_{0}^\frac{1}{(1-r)^2} \frac{\tau^\frac{N-3}{2}}{(1+2\,r\,\tau)^\frac{N+s\,p}{2}}\,d\tau.
\]
The last integral can be further estimated from below by
\[
\int_{0}^\frac{1}{(1-r)^2} \frac{\tau^\frac{N-3}{2}}{(1+2\,r\,\tau)^\frac{N+s\,p}{2}}\,d\tau\ge \frac{1}{3^{N+s\,p}}\,\int_{0}^{1} \tau^\frac{N-3}{2}\,d\tau\ge \frac{1}{3^{N+p}}\,\frac{2}{N-1}.
\]
Thus, from \eqref{h1} we can infer
\[
\mathfrak{h}_{s,p}(\mathbb{R}^N\setminus\{0\})\ge C\,\left(\frac{s\,p-N}{p}\right)^p\,\int_0^1 r^{N-1}\,(1-r)^{p-1-s\,p}\,dr,
\]
where $C=C(N,p)>0$. The last integral can be finally bounded from below as follows
\[
\int_0^1 r^{N-1}\,(1-r)^{p-1-s\,p}\,dr\ge 2^{1-N}\,\int_\frac{1}{2}^1 (1-r)^{p-1-s\,p}\,dr=\frac{2^{1-N}}{p\,(1-s)}\,\left(\frac{1}{2}\right)^{p\,(1-s)}.
\]
This concludes the proof for $N\ge 2$. Finally, the case $N=1$ is simpler and it is left to the reader. 
\end{proof}

\section{Some expedient fractional Poincar\'e--type inequalities}
\label{sec:3}

We first recall the definition of fractional capacity we need to work with. See also \cite{AFN, Ri, SX2, SX} and \cite{Warma} for further studies on other definitions of fractional capacities.
\begin{definition}
\label{defi:capacity}
Let $\Sigma\subseteq\mathbb{R}^N$ be a compact set and let $E\subseteq\mathbb{R}^N$ an open set such that $\Sigma\Subset E$. For every $0<s<1$ and $1<p<\infty$, we define the {\it fractional $p-$capacity of order $s$ of $\Sigma$ relative to $E$} as the quantity
\[
\widetilde{\mathrm{cap}}_{s,p}(\Sigma;E)=\inf_{\varphi\in C^\infty_0(E)} \Big\{[\varphi]^p_{W^{s,p}(\mathbb{R}^N)}\, :\, \varphi\ge 1 \mbox{ on } \Sigma\Big\}.
\]
\end{definition}
If $s\,p>N$, it is well-known that every point has positive fractional $p-$capacity of order $s$, relative to any bounded set. For our scopes, we need a quantitative version of this fact. At this aim, we need an ``endpoint'' Poincar\'e constant. Let $0<s<1$ and $1<p<\infty$ be such that $s\,p>N$. For every $\Omega\subseteq\mathbb{R}^N$ open set, we define
\begin{equation}
\label{lambda_endpoint}
\lambda_{p,\infty}^s(\Omega)=\inf_{\varphi\in C^\infty_0(\Omega)} \Big\{[\varphi]^p_{W^{s,p}(\mathbb{R}^N)}\, :\, \|\varphi\|_{L^\infty(\Omega)}=1\Big\}.
\end{equation}
Ii is not difficult to see that this infimum is unchanged, if we add the further restriction $\varphi\ge 0$.
The following lower bound will be crucial. 
\begin{lemma}
\label{lemma:drive}
Let $N\ge 1$, $0<s<1$ and $1<p<\infty$ be such that $s\,p>N$. For every $\Omega\subseteq\mathbb{R}^N$ open set with finite volume, we have
\[
|\Omega|^\frac{s\,p-N}{N}\,\lambda_{p,\infty}^s(\Omega)\ge N\,(\omega_N)^{\frac{s\,p}{N}}\,\frac{\mathfrak{h}_{s,p}(\mathbb{R}^N\setminus\{0\})}{s\,p-N}.
\]
\end{lemma}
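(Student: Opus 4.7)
The plan is to bound $\lambda_{p,\infty}^s(\Omega)$ from below by applying Hardy's inequality to a shifted version of $1-\varphi$, and then estimating the resulting weighted integral via a bathtub argument.

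Fix an admissible $\varphi\in C^\infty_0(\Omega)$ with $\varphi\ge 0$ and $\|\varphi\|_{L^\infty(\Omega)}=1$ (the nonnegativity restriction being harmless, as noted right after \eqref{lambda_endpoint}). Since $\varphi$ is continuous with compact support, this supremum is attained at some $x_0\in\Omega$; up to translation I may assume $x_0=0$. Set $\psi:=\varphi-1$. Then $\psi(0)=0$, $[\psi]_{W^{s,p}(\mathbb{R}^N)}=[\varphi]_{W^{s,p}(\mathbb{R}^N)}$, and since $\psi$ is smooth and bounded it belongs to $\dot C^{0,\alpha_{s,p}}(\mathbb{R}^N)\cap\mathcal{W}^{s,p}(\mathbb{R}^N)$. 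The characterization of $\mathscr{D}^{s,p}_0(\mathbb{R}^N\setminus\{0\})$ quoted in the ``Family tree'' remark (Proposition \ref{prop:homo}) then places $\psi$ in that space, so Hardy's inequality applies and gives
\[
\mathfrak{h}_{s,p}(\mathbb{R}^N\setminus\{0\})\,\int_{\mathbb{R}^N}\frac{|\psi|^p}{|x|^{s\,p}}\,dx \;\le\; [\varphi]_{W^{s,p}(\mathbb{R}^N)}^p.
\]

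Since $\varphi\equiv 0$ outside $\Omega$, we have $|\psi|^p=1$ on $\mathbb{R}^N\setminus\Omega$, so the Hardy integral is at least $\int_{\mathbb{R}^N\setminus\Omega}|x|^{-s\,p}\,dx$. Choose $R>0$ with $|B_R(0)|=|\Omega|$, i.e.\ $R=(|\Omega|/\omega_N)^{1/N}$. A simple bathtub estimate --- using $|x|^{-s\,p}\ge R^{-s\,p}$ on $B_R$ and $\le R^{-s\,p}$ outside, together with $|B_R\setminus\Omega|=|\Omega\setminus B_R|$ --- yields
\[
\int_{B_R\setminus\Omega}\frac{dx}{|x|^{s\,p}} \;\ge\; R^{-s\,p}\,|B_R\setminus\Omega| \;=\; R^{-s\,p}\,|\Omega\setminus B_R| \;\ge\; \int_{\Omega\setminus B_R}\frac{dx}{|x|^{s\,p}},
\]
from which
\[
\int_{\mathbb{R}^N\setminus\Omega}\frac{dx}{|x|^{s\,p}} \;\ge\; \int_{\mathbb{R}^N\setminus B_R(0)}\frac{dx}{|x|^{s\,p}} \;=\; \frac{N\,\omega_N}{s\,p-N}\,R^{N-s\,p} \;=\; \frac{N\,\omega_N^{s\,p/N}}{s\,p-N}\,|\Omega|^{(N-s\,p)/N}.
\]
(Both sides of the rearrangement are finite because $s\,p>N$ makes the tail integrable at infinity, and $0=x_0\in\Omega$ keeps the remaining pieces away from the singularity.) Combining with the Hardy bound and taking the infimum over admissible $\varphi$ gives exactly the asserted estimate.

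The only non-routine point is the justification that $\psi=\varphi-1$, which is \emph{not} compactly supported, belongs to $\mathscr{D}^{s,p}_0(\mathbb{R}^N\setminus\{0\})$ so that Hardy is applicable. This is dispatched by the forthcoming Proposition \ref{prop:homo}; if one prefers to stay self-contained here, a direct alternative is to apply Hardy to truncations $\eta_k\psi\in C^\infty_0(\mathbb{R}^N\setminus\{0\})$ with $\eta_k\equiv 1$ on $\{1/k\le|x|\le k\}$ and supported in $\{1/(2k)\le|x|\le 2k\}$, and pass to the limit: thanks to $\psi(0)=0$ (with Hölder-type control near the origin from $\psi\in\dot C^{0,\alpha_{s,p}}(\mathbb{R}^N)$) and to $\psi\equiv -1$ outside a compact set, the cutoff errors in $[\eta_k\psi]_{W^{s,p}(\mathbb{R}^N)}$ vanish as $k\to\infty$, while Fatou handles the left-hand integral.
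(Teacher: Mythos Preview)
Your proof is correct and reaches the same conclusion with the same overall scheme --- apply Hardy's inequality on $\mathbb{R}^N\setminus\{0\}$ to $1-\varphi$ (after arranging that this function vanishes at the origin), and bound the Hardy integral from below by $\int_{\mathbb{R}^N\setminus B_R}|x|^{-s\,p}\,dx$ with $|B_R|=|\Omega|$ --- but the way you carry out the reduction to the ball is genuinely different from the paper. The paper passes to the spherically symmetric decreasing rearrangement $\varphi^*$: this forces the maximum of $\varphi^*$ to sit at the origin, makes the ``outside'' set equal to $\mathbb{R}^N\setminus\Omega^*=\mathbb{R}^N\setminus B_R$ on the nose, and then recovers $[\varphi]_{W^{s,p}}$ via the fractional P\'olya--Szeg\H{o} principle. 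You instead translate so the actual maximum point is at the origin and work with $\varphi$ itself; the price is that $\mathbb{R}^N\setminus\Omega$ is not a ball complement, but you handle this with a clean bathtub/layer-cake estimate on the radially decreasing weight $|x|^{-s\,p}$, using $|B_R\setminus\Omega|=|\Omega\setminus B_R|$. Your route is more elementary --- it dispenses with rearrangement and P\'olya--Szeg\H{o} entirely --- while the paper's route is more ``structural'' and would generalize immediately to any situation where a P\'olya--Szeg\H{o} inequality is available. Both rely on Proposition~\ref{prop:homo} (or your sketched cutoff alternative) to justify applying Hardy to the non-compactly supported function $\varphi-1$.
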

\begin{proof}
Let $\varphi\in C^\infty_0(\Omega)$ be such that $\|\varphi\|_{L^\infty(\Omega)}=1$ and $\varphi\ge 0$. We take $\varphi^*$ the spherically symmetric decreasing rearrangement of $\varphi$. This is in particular a Lipschitz function, compactly supported in $\Omega^*$ (see for example \cite[Lemma 1]{Ta}). The latter is the ball centered at the origin, such that $|\Omega^*|=|\Omega|$. We observe that, by construction, we have
\[
\|\varphi^*\|_{L^\infty(\Omega^*)}=\|\varphi\|_{L^\infty(\Omega)}=1.
\]
We also have 
\[
1-\varphi^*\in \mathcal{W}^{s,p}(\mathbb{R}^N)\qquad \mbox{ and }\qquad 1-\varphi^*(0)=0.
\]
Thus in particular we have $1-\varphi^*\in\mathscr{D}^{s,p}_0(\mathbb{R}^N\setminus\{0\})$ by Proposition \ref{prop:homo}. We can then apply Hardy's inequality for $\mathbb{R}^N\setminus\{0\}$ and obtain
\[
\begin{split}
[1-\varphi^*]^p_{W^{s,p}(\mathbb{R}^N)}&\ge \mathfrak{h}_{s,p}(\mathbb{R}^N\setminus\{0\})\,\int_{\mathbb{R}^N} \frac{(1-\varphi^*)^p}{|x|^{s\,p}}\,dx\ge \mathfrak{h}_{s,p}(\mathbb{R}^N\setminus\{0\})\,\int_{\mathbb{R}^N\setminus\Omega^*} \frac{1}{|x|^{s\,p}}\,dx.
\end{split}
\]
We set $R=(|\Omega|/\omega_N)^{1/N}$, then the last integral can be computed and gives
\[
\int_{\mathbb{R}^N\setminus\Omega^*} \frac{1}{|x|^{s\,p}}\,dx=N\,\omega_N\,\int_R^{+\infty} \varrho^{N-1-s\,p}\,d\varrho=\frac{N\,\omega_N}{s\,p-N}\,R^{N-s\,p}=\frac{N\,\omega_N}{s\,p-N}\,\left(\frac{|\Omega|}{\omega_N}\right)^{1-\frac{s\,p}{N}}.
\]
Up to now, we have obtained
\[
[1-\varphi^*]^p_{W^{s,p}(\mathbb{R}^N)}\ge \frac{N\,\omega_N^{\frac{s\,p}{N}}}{s\,p-N}\,|\Omega|^{1-\frac{s\,p}{N}}\,\mathfrak{h}_{s,p}(\mathbb{R}^N\setminus\{0\}).
\]
It is only left to observe that 
\[
[1-\varphi^*]^p_{W^{s,p}(\mathbb{R}^N)}=[\varphi^*]^p_{W^{s,p}(\mathbb{R}^N)}\le [\varphi]^p_{W^{s,p}(\mathbb{R}^N)},
\]
where in the last inequality we used the fractional {\it P\'olya-Szeg\H{o} principle} (see \cite[Theorem 9.2]{AL}). By arbitrariness of $\varphi$, we get the desired conclusion.
\end{proof}
We can now give a good lower bound for the relative capacity of a point.
\begin{lemma}
\label{lemma:point}
Let $N\ge 1$, $0<s<1$ and $1<p<\infty$ be such that $s\,p>N$. For every $\Omega\subseteq\mathbb{R}^N$ open set with finite volume and $\overline{x}\in \Omega$, we have
\[
\widetilde{\mathrm{cap}}_{s,p}\big(\{\overline{x}\};\Omega\big)\ge N\,|\Omega|\,\left(\frac{\omega_N}{|\Omega|}\right)^\frac{s\,p}{N}\,\frac{\mathfrak{h}_{s,p}(\mathbb{R}^N\setminus\{0\})}{s\,p-N}.
\]
\end{lemma}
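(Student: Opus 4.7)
The plan is to reduce the capacity lower bound to the endpoint Poincaré constant $\lambda_{p,\infty}^s(\Omega)$ already controlled in Lemma \ref{lemma:drive}. The key observation is that for the capacity of a single point, the constraint $\varphi\ge 1$ on $\{\overline{x}\}$ is just a pointwise constraint $\varphi(\overline{x})\ge 1$, which for $\varphi\in C^\infty_0(\Omega)$ (and $\overline{x}\in\Omega$) immediately implies $\|\varphi\|_{L^\infty(\Omega)}\ge 1$. This links the capacity problem to the $L^\infty$ Poincaré problem defining $\lambda_{p,\infty}^s(\Omega)$.

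More concretely, first I would recall that by homogeneity, the definition \eqref{lambda_endpoint} of $\lambda_{p,\infty}^s(\Omega)$ is equivalent to the Rayleigh-type formulation
\[
\lambda_{p,\infty}^s(\Omega)=\inf\left\{\frac{[\varphi]^p_{W^{s,p}(\mathbb{R}^N)}}{\|\varphi\|_{L^\infty(\Omega)}^p}\,:\,\varphi\in C^\infty_0(\Omega),\ \varphi\not\equiv 0\right\}.
\]
Then, for any admissible test function $\varphi\in C^\infty_0(\Omega)$ with $\varphi(\overline{x})\ge 1$, continuity of $\varphi$ gives $\|\varphi\|_{L^\infty(\Omega)}\ge 1$, so
\[
[\varphi]^p_{W^{s,p}(\mathbb{R}^N)}\ge \lambda_{p,\infty}^s(\Omega)\,\|\varphi\|_{L^\infty(\Omega)}^p\ge \lambda_{p,\infty}^s(\Omega).
\]
Taking the infimum over admissible $\varphi$ in Definition \ref{defi:capacity} yields the basic estimate
\[
\widetilde{\mathrm{cap}}_{s,p}\big(\{\overline{x}\};\Omega\big)\ge \lambda_{p,\infty}^s(\Omega).
\]

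Finally, I would invoke Lemma \ref{lemma:drive} directly to bound $\lambda_{p,\infty}^s(\Omega)$ from below, obtaining
\[
\widetilde{\mathrm{cap}}_{s,p}\big(\{\overline{x}\};\Omega\big)\ge \lambda_{p,\infty}^s(\Omega)\ge N\,(\omega_N)^{\frac{s\,p}{N}}\,|\Omega|^{\frac{N-s\,p}{N}}\,\frac{\mathfrak{h}_{s,p}(\mathbb{R}^N\setminus\{0\})}{s\,p-N},
\]
and rewriting the right-hand side as $N\,|\Omega|\,(\omega_N/|\Omega|)^{s\,p/N}\,\mathfrak{h}_{s,p}(\mathbb{R}^N\setminus\{0\})/(s\,p-N)$ recovers exactly the claimed bound. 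There is essentially no obstacle here: the only point worth checking is that the pointwise constraint $\varphi(\overline{x})\ge 1$ translates into an $L^\infty$ bound, which is automatic for smooth compactly supported test functions, and that no extra normalization is needed once one uses the homogeneous (Rayleigh-quotient) form of $\lambda_{p,\infty}^s(\Omega)$. The entire substance of the estimate has already been absorbed into Lemma \ref{lemma:drive} (via rearrangement and the sharp Hardy inequality on the punctured space), so this lemma is essentially a bookkeeping step that packages that estimate as a capacity bound.
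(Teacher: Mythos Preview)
Your proof is correct and follows essentially the same approach as the paper: reduce the capacity to $\lambda_{p,\infty}^s(\Omega)$ via the pointwise constraint $\varphi(\overline{x})\ge 1\Rightarrow\|\varphi\|_{L^\infty(\Omega)}\ge 1$, then apply Lemma~\ref{lemma:drive}. Your write-up is in fact slightly more detailed than the paper's (you spell out the Rayleigh-quotient reformulation and the algebraic rewriting of the final bound), but the argument is the same.
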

\begin{proof}
Let $\varphi\in C^\infty_0(\Omega)$ be such that $\varphi(\overline{x})\ge 1$. By using the very definition of $\lambda_{p,\infty}^s(\Omega)$, we have
\[
[\varphi]^p_{W^{s,p}(\mathbb{R}^N)}\ge \lambda_{p,\infty}^s(\Omega)\,\|\varphi\|_{L^\infty(\Omega)}^p\ge \lambda_{p,\infty}^s(\Omega).
\]
By taking the infimum over $\varphi$, we get in particular
\[
\widetilde{\mathrm{cap}}_{s,p}\big(\{\overline{x}\};\Omega\big)\ge \lambda_{p,\infty}^s(\Omega).
\]
By using Lemma \ref{lemma:drive}, we eventually get the claimed estimate.
\end{proof}
The following capacitary fractional Poincar\'e inequality {\it \`a la Maz'ya} can be obtained with minor modifications of the proof of \cite[Proposition 4.3]{BB}. The latter is in turn a fractional generalization of \cite[Theorem 14.1.2]{Maz}.

\begin{proposition}\label{prop:poincare_cap}
	Let $N\ge 1$, $0<s<1$ and $1<p<\infty$ be such that $s\,p>N$. There exists a constant $C=C(N,p)>0$ such that for every $\varrho\le r$ the following Poincar\'e inequality holds
\[
	[u]^p_{W^{s,p}(B_r(x_0))}\ge \frac{C}{r^N}\,\left(\frac{\varrho}{r}\right)^{s\,p}\,\widetilde{\mathrm{cap}}_{s,p}\big(\{x_0\};B_\varrho(x_0)\big)\,\|u\|^p_{L^p(B_r(x_0))},
\]
	for every $u\in C^1(\overline{B_r(x_0)})$ with $u(x_0)=0$. 
\end{proposition}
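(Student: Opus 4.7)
The plan is to adapt Maz'ya's classical capacitary Poincaré argument to the fractional setting. Setting $c = \fint_{B_r(x_0)} u\,dx$, I would split $u = (u - c) + c$ and control the oscillation $(u-c)$ via a local fractional Poincaré--Wirtinger inequality, while paying for the constant $|c| = |u(x_0) - c|$ (using $u(x_0)=0$) with a well-chosen competitor for $\widetilde{\mathrm{cap}}_{s,p}(\{x_0\};B_\varrho(x_0))$ built out of $u$ itself.

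More concretely, from the triangle inequality
\[
\|u\|_{L^p(B_r(x_0))}^p \le 2^{p-1}\bigl(\|u - c\|_{L^p(B_r(x_0))}^p + \omega_N\,r^N\,|c|^p\bigr)
\]
and the bound $\|u - c\|_{L^p(B_r(x_0))}^p \le C\, r^{sp}\,[u]_{W^{s,p}(B_r(x_0))}^p$ (a localized variant of \eqref{PWs} on the ball), the claim reduces to the key estimate
\[
|c|^p\,\widetilde{\mathrm{cap}}_{s,p}\bigl(\{x_0\};B_\varrho(x_0)\bigr) \le C\,(r/\varrho)^{sp}\,[u]_{W^{s,p}(B_r(x_0))}^p.
\]
Assuming $c\ne 0$, the natural competitor is
\[
\Phi(x) = \eta(x)\,\frac{c-u(x)}{c},
\]
where $\eta\in C^\infty_0(B_\varrho(x_0))$ is a radial cutoff with $\eta\equiv 1$ on $B_{\varrho/2}(x_0)$, $\operatorname{supp}\eta\subseteq B_{3\varrho/4}(x_0)$, $0\le\eta\le 1$, and $|\nabla\eta|\le C/\varrho$. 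Since $u(x_0)=0$, we get $\Phi(x_0)=1$, and after mollification $\Phi$ is admissible in Definition \ref{defi:capacity}. Expanding $\Phi(x)-\Phi(y)$ via the discrete Leibniz rule and splitting $\mathbb{R}^N\times\mathbb{R}^N$ into the diagonal part $B_\varrho\times B_\varrho$ and the two symmetric tails (where $\Phi$ vanishes outside $B_\varrho(x_0)$), I expect to arrive at
\[
c^p\,[\Phi]_{W^{s,p}(\mathbb{R}^N)}^p \le C\,[u]_{W^{s,p}(B_r(x_0))}^p + C\,\varrho^{-sp}\,\|u-c\|_{L^p(B_r(x_0))}^p,
\]
the factor $\varrho^{-sp}$ arising from the elementary inequalities
\[
\int_{\mathbb{R}^N}\frac{|\eta(x)-\eta(y)|^p}{|x-y|^{N+sp}}\,dy \le C\varrho^{-sp},\qquad \int_{|y-x_0|\ge\varrho}\frac{dy}{|x-y|^{N+sp}}\le C\varrho^{-sp}\quad (x\in\operatorname{supp}\eta),
\]
both obtained by interpolating $|\eta|\le 1$ with the Lipschitz bound $|\eta(x)-\eta(y)|\le (C/\varrho)\,|x-y|$. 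A final application of Poincaré--Wirtinger then gives the key estimate.

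The last step is to absorb the residual term $C\,r^{sp}\,[u]^p$ coming from the Poincaré--Wirtinger part into the main capacity term; this works because $\widetilde{\mathrm{cap}}_{s,p}(\{x_0\};B_\varrho(x_0)) \le C\varrho^{N-sp}$ (trivial upper bound via a bump), hence $\widetilde{\mathrm{cap}}_{s,p}\bigl(\{x_0\};B_\varrho(x_0)\bigr)\,\varrho^{sp}\le C\varrho^N\le C\,r^N$ by $\varrho\le r$, which is exactly the relation needed for the absorption. The only really delicate point I foresee is the control of the tail integrals where one variable leaves $B_\varrho(x_0)$: here it is crucial that $\eta$ be supported strictly inside $B_\varrho(x_0)$ (this forces the choice $\operatorname{supp}\eta\subseteq B_{3\varrho/4}(x_0)$), so that $\operatorname{dist}(\operatorname{supp}\eta,\partial B_\varrho(x_0))\ge \varrho/4$ and the tail integral above is uniformly bounded by $C\varrho^{-sp}$ for every $x\in\operatorname{supp}\eta$. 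Everything else is a routine combination of Poincaré--Wirtinger on a ball with the variational definition of capacity.
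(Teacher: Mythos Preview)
Your approach is essentially the paper's Maz'ya-type argument: build a capacity competitor of the form $\eta\cdot(\text{constant}-u)$, expand via the discrete Leibniz rule, and close with the fractional Poincar\'e--Wirtinger inequality on balls. The paper organizes the bookkeeping slightly differently---it normalizes $\fint_{B_r}|u|^p=1$, uses the competitor $(1-u)\eta$, and bounds $\widetilde{\mathrm{cap}}_{s,p}$ directly by $C(r/\varrho)^{sp}[u]^p$ without first splitting $\|u\|_{L^p}^p$---which sidesteps your absorption step and the need for any upper bound on the capacity.

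One detail to watch in your version: the bump bound $\widetilde{\mathrm{cap}}_{s,p}(\{x_0\};B_\varrho(x_0))\le C\varrho^{N-sp}$ is \emph{not} uniform in $s$ (for a fixed smooth bump $\tilde\eta$ one has $[\tilde\eta]^p_{W^{s,p}}\sim 1/(1-s)$ as $s\nearrow 1$), so as written your absorption gives $C=C(N,p,s)$. The fix is already implicit in your reference to \eqref{PWs}: keep the factor $(1-s)$ in Poincar\'e--Wirtinger, so that the term to absorb is $C(1-s)r^{sp}[u]^p$; then the $1/(1-s)$ in the bump bound and the $(1-s)$ from Poincar\'e--Wirtinger cancel, and the absorption $(1-s)\widetilde{\mathrm{cap}}_{s,p}\,\varrho^{sp}\le C\varrho^N\le Cr^N$ goes through with $C=C(N,p)$. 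The paper's organization avoids this cancellation altogether, which is its only real advantage.
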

\begin{proof}
Let us take $0<\varrho\le r$ and $\eta\in C^\infty_0(B_{\varrho/2}(x_0))$ a cut-off function such that 
\[
\eta\equiv 1 \mbox{ on } B_\frac{\varrho}{4}(x_0),\qquad 0\le \eta\le 1,\qquad |\nabla \eta|\le \frac{C}{\varrho},
\]
for some constant $C=C(N)>0$. For every $u\in C^1(\overline{B_r(x_0)})$ such that $u(x_0)=0$ and 
\[
\fint_{B_r(x_0)}|u|^p\,dx=1,
\]
we have that the function $\varphi=(1-u)\,\eta$ is admissible for the definition of the capacity of the point $x_0$. Thus we have
\[
\begin{split}
\widetilde{\mathrm{cap}}_{s,p}\big(\{x_0\};B_\varrho(x_0)\big)&\le [(1-u)\,\eta]_{W^{s,p}(\mathbb{R}^N)}^p=\big[(1-u)\,\eta\big]^p_{W^{s,p}(B_\varrho(x_0))}\\
&+2\,\int_{B_\varrho(x_0)}\,|1-u(x)|^p\,|\eta(x)|^p \left(\int_{\mathbb{R}^N\setminus B_\varrho(x_0)}\frac{dy}{|x-y|^{N+s\,p}}\right)\,dx\\
&\le \big[(1-u)\,\eta\big]^p_{W^{s,p}(B_\varrho(x_0))}\\
&+2\,\int_{B_\frac{\varrho}{2}(x_0)}\,|1-u(x)|^p\,\left(\int_{\mathbb{R}^N\setminus B_\varrho(x_0)}\frac{dy}{|x-y|^{N+s\,p}}\right)\,dx\\ 
&\le 2^{p-1}\,\iint_{B_\varrho(x_0)\times B_\varrho(x_0)} \frac{|u(x)-u(y)|^p}{|x-y|^{N+s\,p}}\,\eta(x)^p\,dx\,dy\\
&+2^{p-1}\,\iint_{B_\varrho(x_0)\times B_\varrho(x_0)} \frac{|\eta(x)-\eta(y)|^p}{|x-y|^{N+s\,p}}\,|1-u(y)|^p\,dx\,dy\\
&+2\,\int_{B_\frac{\varrho}{2}(x_0)}\,|1-u(x)|^p\,\left(\int_{\mathbb{R}^N\setminus B_\varrho(x_0)}\frac{dy}{|x-y|^{N+s\,p}}\right)\,dx.
\end{split}
\]
By using that $B_\varrho(x_0)\subseteq B_r(x_0)$ and $|\eta|\le 1$, we observe that 
\[
\iint_{B_\varrho(x_0)\times B_\varrho(x_0)} \frac{|u(x)-u(y)|^p}{|x-y|^{N+s\,p}}\,\eta(x)^p\,dx\,dy\le [u]_{W^{s,p}(B_r(x_0))}^p.
\]
As for the second double integral, by using the Lipschitz estimate on $\eta$, we have 
\[
\begin{split}
\iint_{B_\varrho(x_0)\times B_\varrho(x_0)}& \frac{|\eta(x)-\eta(y)|^p}{|x-y|^{N+s\,p}}\,|1-u(y)|^p\,dx\,dy\\
&\le \frac{C}{\varrho^p}\,\iint_{B_\varrho(x_0)\times B_\varrho(x_0)} |x-y|^{p-N-s\,p}\,|1-u(y)|^p\,dx\,dy\\
&\le \frac{C}{\varrho^p}\,\int_{B_\varrho(x_0)}\,\left(\int_{B_{2\varrho}(y)} |x-y|^{p-N-s\,p}\,dx\right)\,|1-u(y)|^p\,dy\\
&=\frac{C\,N\,\omega_N}{2^{p-s\,p}\,p\,(1-s)}\,\frac{1}{\varrho^{s\,p}}\,\int_{B_\varrho(x_0)} |1-u(y)|^p\,dy.
\end{split}
\]
Up to now, we obtained
\[
\begin{split}
\widetilde{\mathrm{cap}}_{s,p}\big(\{x_0\};B_\varrho(x_0)\big)&\le 2^{p-1}\,[u]_{W^{s,p}(B_r(x_0))}^p+\frac{C}{\varrho^{s\,p}}\,\frac{1}{1-s}\,\int_{B_\varrho(x_0)} |1-u(y)|^p\,dy\\
&+2\,\int_{B_\frac{\varrho}{2}(x_0)}\,|1-u(x)|^p\,\left(\int_{\mathbb{R}^N\setminus B_\varrho(x_0)}\frac{dy}{|x-y|^{N+s\,p}}\right)\,dx,
\end{split}
\]
for some $C=C(N,p)>0$. In turn, for the third integral we can observe that for every $x\in B_{\varrho/2}(x_0)$ and $y\not\in B_\varrho(x_0)$, we have
\[
|x-y|\ge |y-x_0|-|x-x_0|\ge |y-x_0|-\frac{\varrho}{2}\ge \frac{|y-x_0|}{2}.
\]
Thus, we get
\[
\begin{split}
\int_{B_\frac{\varrho}{2}(x_0)}&\,|1-u(x)|^p\,\left(\int_{\mathbb{R}^N\setminus B_\varrho(x_0)}\frac{dy}{|x-y|^{N+s\,p}}\right)\,dx\\
&\le 2^{N+s\,p}\,\int_{B_\frac{\varrho}{2}(x_0)}\,|1-u(x)|^p\,\left(\int_{\mathbb{R}^N\setminus B_\varrho(x_0)}\frac{dy}{|y-x_0|^{N+s\,p}}\right)\,dx\\
&=\frac{2^{N+s\,p}}{s\,p} \,\frac{N\,\omega_N}{\varrho^{s\,p}}\,\int_{B_\frac{\varrho}{2}(x_0)}\,|1-u(x)|^p\,dx\\
&\le \frac{2^{N+s\,p}}{s\,p} \,\frac{N\,\omega_N}{\varrho^{s\,p}}\,\int_{B_\varrho(x_0)}\,|1-u(x)|^p\,dx
\end{split}
\]
In conclusion, by also using that $N/sp<1$, we get
\[
\widetilde{\mathrm{cap}}_{s,p}\big(\{x_0\};B_\varrho(x_0)\big)\le 2^{p-1}\,[u]_{W^{s,p}(B_r(x_0))}^p+\frac{C}{\varrho^{s\,p}}\,\frac{1}{1-s}\,\int_{B_\varrho(x_0)} |1-u(y)|^p\,dy,
\]
for some $C=C(N,p)>0$.
We are only left with estimating the $L^p$ norm of $1-u$. At this aim, we observe that
\[
\|1-u\|_{L^p(B_\varrho(x_0))}\le \left\|1-\mathrm{av}\left(u;B_\varrho(x_0)\right)\right\|_{L^p(B_\varrho(x_0))}+\left\|u-\mathrm{av}\left(u;B_\varrho(x_0)\right)\right\|_{L^p(B_\varrho(x_0))}.
\]
The second term can be estimated thanks to the fractional Poincar\'e-Wirtinger inequality, i.e.\footnote{The proof of this inequality can be extrapolated from that of \cite[Corollary 1, page 524]{Maz}. Alternatively, it is sufficient to repeat verbatim the proof of \cite[Lemma 3.5]{BB}, for $p\not=2$. We point out that the presence of the factor $(1-s)$ is important for us. A fractional Poincar\'e-Wirtinger inequality without this factor can be found for example in \cite[equation (4.2)]{Mi2003}.}
\begin{equation}
\label{fracPW}
\left\|u-\mathrm{av}\left(u;B_\varrho(x_0)\right)\right\|^p_{L^p(B_\varrho(x_0))}\le C\,(1-s)\,\varrho^{s\,p}\,[u]^p_{W^{s,p}(B_\varrho(x_0))},
\end{equation}
where $C=C(N,p)>0$. 
For the first term, we recall the normalization on the $L^p$ norm of $u$. We also observe that, without loss of generality, we can suppose that
\[
\mathrm{av}\left(u;B_\varrho(x_0)\right)\ge 0.
\]
Thus, we have
\[
\begin{split}
\left\|1-\mathrm{av}\left(u;B_\varrho(x_0)\right)\right\|_{L^p(B_\varrho(x_0))}&=|B_\varrho(x_0)|^\frac{1}{p} \left|1-\mathrm{av}\left(u;B_\varrho(x_0)\right)\right|\\
&=\left(\frac{|B_\varrho(x_0)|}{|B_r(x_0)|}\right)^\frac{1}{p}\, \left|\|u\|_{L^p(B_r(x_0)}-\left\|\mathrm{av}\left(u;B_\varrho(x_0)\right)\right\|_{L^p(B_r(x_0)}\right|\\
&\le \left(\frac{|B_\varrho(x_0)|}{|B_r(x_0)|}\right)^\frac{1}{p}\,\left(\int_{B_r(x_0)} \left|u-\mathrm{av}\left(u;B_\varrho(x_0)\right)\right|^p\,dx\right)^\frac{1}{p}.
\end{split}
\]
For the last term, we have 
\[
\begin{split}
\left(\int_{B_r(x_0)} \left|u-\mathrm{av}\left(u;B_\varrho(x_0)\right)\right|^p\,dx\right)^\frac{1}{p}&\le \left(\int_{B_r(x_0)} \left|u-\mathrm{av}\left(u;B_r(x_0)\right)\right|^p\,dx\right)^\frac{1}{p}\\
&+\left(\int_{B_r(x_0)} \left|\mathrm{av}\left(u;B_r(x_0)\right)-\mathrm{av}\left(u;B_\varrho(x_0)\right)\right|^p\,dx\right)^\frac{1}{p}.
\end{split}
\]
Moreover, by using Jensen's inequality, we get
\[
\begin{split}
\int_{B_r(x_0)} \left|\mathrm{av}\left(u;B_r(x_0)\right)-\mathrm{av}\left(u;B_\varrho(x_0)\right)\right|^p\,dx&=\int_{B_r(x_0)} \left|\fint_{B_\varrho(x_0)}[u(y)-\mathrm{av}\left(u;B_r(x_0)\right)]\,dy\right|^p\,dx\\
&\le \int_{B_r(x_0)} \fint_{B_\varrho(x_0)}|u(y)-\mathrm{av}\left(u;B_r(x_0)\right)|^p\,dy\,dx\\
&\le \frac{r^N}{\varrho^N}\, \int_{B_r(x_0)}|u(y)-\mathrm{av}\left(u;B_r(x_0)\right)|^p\,dy.
\end{split}
\]
We conclude by using again the fractional Poincar\'e-Wirtinger inequality \eqref{fracPW}, this time on the ball $B_r(x_0)$.
\end{proof}

\section{A proof of Morrey's inequality}

\label{sec:4}

We need to introduce a further Poincar\'e--type constant. For every ball $B_r(x_0)$
and every $0<s<1$, $1<p<\infty$ such that $s\,p>N$, we define
\begin{equation}
\label{lambdone}
\Lambda_{s,p}(B_r(x_0)):=\inf_{\varphi\in C^1(\overline{B_r(x_0)})} \Big\{[\varphi]_{W^{s,p}(B_r(x_0))}^p\, :\, \|\varphi\|_{L^p(B_r(x_0))}=1 \mbox{ and } \varphi(x_0)=0\Big\}.
\end{equation}
We observe that such a constant is positive. Indeed, by using Proposition \ref{prop:poincare_cap} with $\varrho=r$, we get
\begin{equation}
\label{lam_cap}
\Lambda_{s,p}(B_r(x_0))\ge \frac{C\,s}{r^N}\,\widetilde{\mathrm{cap}}_{s,p}\big(\{x_0\};B_r(x_0)\big),
\end{equation}
and the relative capacity of a point is positive (see Lemma \ref{lemma:point}). 
We also observe that $\Lambda_{s,p}$ enjoys the following scaling and translational rule
\begin{equation}
\label{scalingalpha}
\Lambda_{s,p}(B_r(x_0))=r^{-s\,p}\,\Lambda_{s,p}(B_1(0)).
\end{equation}
The main interest of the positive constant $\Lambda_{s,p}$ is that it permits to bound from below the constant $\mathfrak{m}_{s,p}(\mathbb{R}^N)$. The key estimate is the following one. Loosely speaking, this may be seen as a ``pointwise fractional Hardy inequality'', in nuce (see \cite{Ha, Sk} for more details on the subject).

\begin{lemma}
\label{lm:1stjuly}
Let $N\ge 1$, $0<s<1$ and $1<p<\infty$ be such that $s\,p>N$. For every $r>0$, $x_0\in\mathbb{R}^N$ and $\varphi\in C^1(\overline{B_r(x_0)})$ such that $\varphi(x_0)=0$, we have
\[
|\varphi(x)|\le \left(\frac{1}{\omega_N\,\Lambda_{s,p}(B_1(0))}\right)^\frac{1}{p}\,\frac{\big(r-|x-x_0|\big)^s+r^s}{\big(r-|x-x_0|\big)^\frac{N}{p}}\,[\varphi]_{W^{s,p}(B_r(x_0))},\quad \mbox{ for } x\in B_r(x_0).
\] 
\end{lemma}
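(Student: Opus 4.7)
\medskip

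\noindent\textbf{Proof sketch.} The key observation is that the right-hand side decomposes as a sum of two terms, $\rho^s := (r-|x-x_0|)^s$ and $r^s$, and these will come from two separate applications of the constant $\Lambda_{s,p}$: one on a small ball centered at $x$ (yielding the $\rho^s$), and one on the big ball $B_r(x_0)$ itself (yielding the $r^s$).

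The plan is as follows. First, set $\rho = r - |x-x_0|$, so that $B_\rho(x)\subseteq B_r(x_0)$ by the triangle inequality. Then apply Minkowski's inequality in $L^p(B_\rho(x))$ to the pointwise identity $\varphi(x) = (\varphi(x)-\varphi(y)) + \varphi(y)$, to obtain
\[
|\varphi(x)|\,\big(\omega_N\,\rho^N\big)^\frac{1}{p}\le \|\varphi-\varphi(x)\|_{L^p(B_\rho(x))}+\|\varphi\|_{L^p(B_\rho(x))}.
\]
The two terms on the right are then estimated separately by $\Lambda_{s,p}$.

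For the first term, the function $\psi := \varphi-\varphi(x)\in C^1(\overline{B_\rho(x)})$ satisfies $\psi(x)=0$, so by definition of $\Lambda_{s,p}(B_\rho(x))$ combined with the scaling rule \eqref{scalingalpha}, we get
\[
\|\varphi-\varphi(x)\|_{L^p(B_\rho(x))}^p \le \frac{[\varphi]_{W^{s,p}(B_\rho(x))}^p}{\Lambda_{s,p}(B_\rho(x))} = \frac{\rho^{s\,p}}{\Lambda_{s,p}(B_1(0))}\,[\varphi]_{W^{s,p}(B_\rho(x))}^p,
\]
and we then enlarge the integration domain from $B_\rho(x)$ to $B_r(x_0)$ in the Sobolev-Slobodecki\u{\i} seminorm. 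For the second term, since $\varphi\in C^1(\overline{B_r(x_0)})$ with $\varphi(x_0)=0$, we apply $\Lambda_{s,p}(B_r(x_0))$ directly to $\varphi$ and use monotonicity of the $L^p$ norm with respect to the domain together with \eqref{scalingalpha}, yielding
\[
\|\varphi\|_{L^p(B_\rho(x))}\le \|\varphi\|_{L^p(B_r(x_0))}\le \frac{r^s}{\Lambda_{s,p}(B_1(0))^{1/p}}\,[\varphi]_{W^{s,p}(B_r(x_0))}.
\]
Summing the two bounds, dividing by $(\omega_N\,\rho^N)^{1/p}$, and using $\rho=r-|x-x_0|$ produces exactly the claimed inequality.

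There is no serious obstacle here: the main thing to verify is that $B_\rho(x)\subseteq B_r(x_0)$, that $\psi\in C^1(\overline{B_\rho(x)})$ is admissible for the variational problem defining $\Lambda_{s,p}(B_\rho(x))$ (trivially, since $\psi(x)=0$ and $\psi$ inherits $C^1$ regularity from $\varphi$), and that the two contributions $\rho^s$ and $r^s$ combine with the common prefactor $(\omega_N\,\Lambda_{s,p}(B_1(0)))^{-1/p}$ and denominator $\rho^{N/p}$ exactly in the form given in the statement.
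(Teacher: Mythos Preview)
Your proof is correct and is essentially identical to the paper's own argument: the paper also sets $\delta=r-|x-x_0|$, uses the triangle inequality $|\varphi(x)|\le|\varphi(x)-\varphi(z)|+|\varphi(z)|$ averaged over $B_\delta(x)$ (which is your Minkowski step), and then bounds the two resulting $L^p$ averages by applying $\Lambda_{s,p}$ on $B_\delta(x)$ and on $B_r(x_0)$ respectively, exactly as you do. The only cosmetic difference is that the paper phrases the first step as ``averaging plus Jensen'' rather than ``Minkowski in $L^p$'', but these are the same inequality.
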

\begin{proof}
We pick a point $x\in B_r(x_0)$ and we set for brevity $\delta=r-|x-x_0|$. For every $\varphi$ as in the statement, we estimate 
\[
|\varphi(x)|\le|\varphi(x)-\varphi(z)|+|\varphi(z)|,\qquad \mbox{ for every }z\in B_\delta(x).
\]
By taking the average over $B_\delta(x)$ and using Jensen's inequality, we get
\begin{equation}
\label{pointwise}
|\varphi(x)|\le \left(\fint_{B_\delta(x)}|\varphi(x)-\varphi(z)|^p\,dz\right)^\frac{1}{p}+\left(\fint_{B_\delta(x)}|\varphi(z)|^p\,dz\right)^\frac{1}{p}.
\end{equation}
We now observe that the function
\[
v(z)=\frac{\varphi(z)-\varphi(x)}{\|\varphi(\cdot)-\varphi(x)\|_{L^p(B_\delta(x))}},
\]
is admissible for the problem which defines $\Lambda_{s,p}(B_\delta(x))$. Thus we get
\[
\Lambda_{s,p}(B_\delta(x))\le [v]_{W^{s,p}(B_\delta(x))}^p=\frac{[\varphi]_{W^{s,p}(B_\delta(x))}^p}{\displaystyle\int_{B_\delta(x)} |\varphi(z)-\varphi(x)|^p\,dx}.
\]
By taking into account \eqref{scalingalpha}, this can be recast into
\[
\int_{B_\delta(x)} |\varphi(z)-\varphi(x)|^p\,dx\le \frac{\delta^{s\,p}}{\Lambda_{s,p}(B_1(0))}\,[\varphi]_{W^{s,p}(B_\delta(x))}^p\le \frac{\delta^{s\,p}}{\Lambda_{s,p}(B_1(0))}\,[\varphi]_{W^{s,p}(B_r(x_0))}^p.
\]
From \eqref{pointwise}, we thus obtain
\[
|\varphi(x)|\le \frac{1}{(\omega_N\,\delta^N)^\frac{1}{p}}\,\frac{\delta^{s}}{\Big(\Lambda_{s,p}(B_1(0))\Big)^\frac{1}{p}}\,[\varphi]_{W^{s,p}(B_r(x_0))}+\left(\fint_{B_\delta(x)}|\varphi(z)|^p\,dz\right)^\frac{1}{p}.
\]
Finally, for the last term we can simply observe that
\[
\fint_{B_\delta(x)}|\varphi(z)|^p\,dz\le \frac{1}{\omega_N\,\delta^N}\,\int_{B_r(x_0)} |\varphi(z)|^p\,dz\le \frac{1}{\omega_N\,\delta^N}\,\frac{1}{\Lambda_{s,p}(B_r(x_0))}\,[\varphi]_{W^{s,p}(B_r(x_0))}^p.
\]
By using again \eqref{scalingalpha}, we conclude.
\end{proof}
By virtue of the previous result, we can now prove the validity of the fractional homogeneous Morrey inequality on the whole space, together with a quantitative lower bound. We use the notation $\mathfrak{m}_{s,p}(\mathbb{R}^N)$ introduced in \eqref{morreysharp}.
\begin{theorem}
\label{thm:morrey}
Let $N\ge 1$, $0<s<1$ and $1<p<\infty$ be such that $s\,p>N$. Let us define 
the constant
\[
\theta_{N,s,p}:=\omega_N\,\sup_{T>1} \frac{(T-1)^N}{\Big((T-1)^s+T^s\Big)^p}.
\] 
With the notation above, we have 
\[
\theta_{N,s,p}\,\Lambda_{s,p}(B_1(0))\le\mathfrak{m}_{s,p}(\mathbb{R}^N).
\]
\end{theorem}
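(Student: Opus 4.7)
The entire statement follows from the pointwise estimate of Lemma \ref{lm:1stjuly} by a clean two-point argument. Fix an arbitrary $\varphi\in C^\infty_0(\mathbb{R}^N)$ and two distinct points $x_0,y_0\in\mathbb{R}^N$. For any parameter $T>1$, set
\[
r=T\,|x_0-y_0|,\qquad \psi(x)=\varphi(x)-\varphi(x_0),\qquad x\in \overline{B_r(x_0)}.
\]
Then $\psi\in C^1(\overline{B_r(x_0)})$, $\psi(x_0)=0$, and $[\psi]_{W^{s,p}(B_r(x_0))}=[\varphi]_{W^{s,p}(B_r(x_0))}\le [\varphi]_{W^{s,p}(\mathbb{R}^N)}$. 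Since $|y_0-x_0|<r$, the point $y_0$ lies in $B_r(x_0)$, and we may apply Lemma \ref{lm:1stjuly} to $\psi$ at $x=y_0$.

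\textbf{Key computation.} With $r-|y_0-x_0|=(T-1)\,|x_0-y_0|$, Lemma \ref{lm:1stjuly} yields
\[
|\varphi(y_0)-\varphi(x_0)|\le \left(\frac{1}{\omega_N\,\Lambda_{s,p}(B_1(0))}\right)^{\!\frac{1}{p}}\,\frac{\bigl((T-1)\,|x_0-y_0|\bigr)^s+\bigl(T\,|x_0-y_0|\bigr)^s}{\bigl((T-1)\,|x_0-y_0|\bigr)^{N/p}}\,[\varphi]_{W^{s,p}(\mathbb{R}^N)}.
\]
Factoring out $|x_0-y_0|$ and using $\alpha_{s,p}=s-N/p$ we obtain, after dividing by $|x_0-y_0|^{\alpha_{s,p}}$,
\[
\frac{|\varphi(y_0)-\varphi(x_0)|}{|x_0-y_0|^{\alpha_{s,p}}}\le \left(\frac{1}{\omega_N\,\Lambda_{s,p}(B_1(0))}\right)^{\!\frac{1}{p}}\,\frac{(T-1)^s+T^s}{(T-1)^{N/p}}\,[\varphi]_{W^{s,p}(\mathbb{R}^N)}.
\]
Since the right-hand side is independent of $x_0,y_0$, we may take the supremum over distinct points and then raise to the $p$-th power, obtaining
\[
\omega_N\,\frac{(T-1)^N}{\bigl((T-1)^s+T^s\bigr)^p}\,\Lambda_{s,p}(B_1(0))\,[\varphi]^p_{C^{0,\alpha_{s,p}}(\mathbb{R}^N)}\le [\varphi]^p_{W^{s,p}(\mathbb{R}^N)}.
\]

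\textbf{Conclusion.} Since $T>1$ was arbitrary, we optimize by taking the supremum over $T$ on the left-hand side. This produces exactly the constant $\theta_{N,s,p}$ defined in the statement, so that
\[
\theta_{N,s,p}\,\Lambda_{s,p}(B_1(0))\,[\varphi]^p_{C^{0,\alpha_{s,p}}(\mathbb{R}^N)}\le [\varphi]^p_{W^{s,p}(\mathbb{R}^N)},\qquad \varphi\in C^\infty_0(\mathbb{R}^N).
\]
Restricting to $\varphi$ with $[\varphi]_{C^{0,\alpha_{s,p}}(\mathbb{R}^N)}=1$ and passing to the infimum in the definition \eqref{morreysharp} of $\mathfrak{m}_{s,p}(\mathbb{R}^N)$ yields the claim. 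The supremum defining $\theta_{N,s,p}$ is finite and positive: the integrand vanishes at rate $(T-1)^N$ as $T\searrow 1$, while as $T\to\infty$ it decays like $T^{N-sp}$, which tends to $0$ precisely because $sp>N$. No step is genuinely delicate; the only care needed is bookkeeping the exponents and the scaling factor $|x_0-y_0|^{\alpha_{s,p}}$ so that the variable $T$ cleanly decouples.
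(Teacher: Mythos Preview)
Your proof is correct and follows essentially the same approach as the paper: fix two points, center a ball of radius $T$ times their distance at one of them, apply Lemma~\ref{lm:1stjuly} to $\varphi$ shifted by its value at the center, and then optimize in $T$. The paper's argument is identical up to notation (it centers the ball at the second point rather than the first), and your added remark on the finiteness and positivity of $\theta_{N,s,p}$ is a welcome clarification.
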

\begin{proof}
We take $\varphi\in C^\infty_0(\mathbb{R}^N)$, then we fix  two points $x\not= y$ and set $\delta=|x-y|$. We also take $T>1$ and consider the function
\[
v(z)=\varphi(z)-\varphi(y),\qquad \mbox{ for } z\in B_{T\delta}(y).
\]
We observe that, by construction, $x\in B_{T\delta}(y)$.
By applying Lemma \ref{lm:1stjuly} to the function $v$, which vanishes at the center of the ball $B_{T\delta}(y)$, we get
\begin{equation}
\label{mor}
\begin{split}
|\varphi(x)-\varphi(y)|=|v(x)|&\le \left(\frac{1}{\omega_N\,\Lambda_{s,p}(B_1(0))}\right)^\frac{1}{p}\,\frac{(T-1)^s\,\delta^s+T^s\,\delta^s}{\big((T-1)\,\delta\big)^\frac{N}{p}}\,[\varphi]_{W^{s,p}(B_{T\,\delta }(y))}\\
&\leq \left(\frac{1}{\omega_N\,\Lambda_{s,p}(B_1(0))}\right)^\frac{1}{p}\,\frac{(T-1)^s+T^s}{(T-1)^\frac{N}{p}}\,\delta^{s-\frac{N}{p}}\,[\varphi]_{W^{s,p}(\mathbb{R}^N)}
\end{split}
\end{equation}
By recalling that $\delta=|x-y|$, the previous estimate permits to conclude. 
\end{proof}
\begin{remark}
We do not expect the lower bound obtained in Theorem \ref{thm:morrey} to be sharp. However, we will see in Section \ref{sec:8} that it becomes ``qualitatively sharp'' in the limit $s\searrow N/p$, as explained in the Introduction. 
This is the main bonus feature of using the Poincar\'e constant $\Lambda_{s,p}$. 
\par
Moreover, it also becomes ``quantitatively sharp'' in the asymptotic regime $p\nearrow \infty$: in this case, we will see in Section \ref{sec:8} that we have
\[
\lim_{p\nearrow \infty} \Big(\mathfrak{m}_{s,p}(\mathbb{R}^N)\Big)^\frac{1}{p}=\lim_{p\nearrow \infty}\Big(\theta_{N,s,p}\,\Lambda_{s,p}(B_1(0))\Big)^\frac{1}{p}=1.
\]
\end{remark}

\section{The H\"older seminorm of functions in $\mathcal{W}^{s,p}$}

\label{sec:5}

In this section, we will show that for every $\varphi\in \mathcal{W}^{s,p}(\mathbb{R}^N)$, the supremum in the $C^{0,\alpha_{s,p}}$ seminorm is actually a maximum.
We need at first the following ``localized'' fractional Morrey inequality. This is analogous to \cite[equation (6.1)]{HS}.
\begin{lemma}
\label{lm:pratignano}
Let $N\ge 1$, $0<s<1$ and $1<p<\infty$ be such that $s\,p>N$. For every open ball $B_r(x_0)\subseteq \mathbb{R}^N$, every $\varphi\in C(\overline{B_r(x_0)})$ and every $x,y\in \overline{B_r(x_0)}$ such that
\[
B_\frac{\delta}{2}\left(\frac{x+y}{2}\right)\subseteq \overline{B_r(x_0)},\qquad \mbox{ for } \delta=|x-y|,
\] 
we have 
\begin{equation}
\label{morreymodif}
\frac{|\varphi(x)-\varphi(y)|}{|x-y|^{s-\frac{N}{p}}}\le  4^\frac{4\,N+1}{p}\,  \left(\frac{2^{N+2\,p}}{\omega_N\,\Lambda_{s,p}(B_1(0))}\right)^\frac{1}{p}\,[\varphi]_{W^{s,p}(B_\frac{\delta}{2}(\frac{x+y}{2}))}.
 \end{equation}
\end{lemma}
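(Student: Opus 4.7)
The plan is to adapt the strategy of Lemma \ref{lm:1stjuly} to the two-point setting on the ball $B^*:=B_{\delta/2}(m)$, where $m=(x+y)/2$. The fundamental difficulty is that $x$ and $y$ always lie on $\partial B^*$ (since $|x-m|=|y-m|=\delta/2$), so Lemma \ref{lm:1stjuly}---whose pointwise bound blows up as the evaluation point approaches $\partial B^*$---cannot be applied at $x$ or $y$ directly. Moreover, by convexity of $B^*$, no ball contained in $B^*$ can have $x$ (or $y$) in its interior, so no reformulation with a shifted center escapes this obstacle.

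I would therefore introduce a dyadic chain of interior approximants. Set $e:=(x-m)/|x-m|$ and define $x_k:=m+(1-2^{-k})(\delta/2)\,e$ for $k\ge 0$. Each $x_k$ lies in $B^*$ at distance $R_k:=2^{-k}(\delta/2)$ from $\partial B^*$, and $x_k\to x$, so by the continuity assumption $\varphi(x_k)\to\varphi(x)$. Crucially the ball $B_{R_k}(x_k)$ is contained in $B^*$, and the next point $x_{k+1}$ sits in its interior, precisely at distance $R_k/2$ from the center. Applying Lemma \ref{lm:1stjuly} to $\varphi-\varphi(x_k)$ on $B_{R_k}(x_k)$ with evaluation point $z=x_{k+1}$, and using monotonicity $[\varphi]_{W^{s,p}(B_{R_k}(x_k))}\le [\varphi]_{W^{s,p}(B^*)}$, gives
\[
|\varphi(x_{k+1})-\varphi(x_k)|\le C_1\,R_k^{\,s-N/p}\,[\varphi]_{W^{s,p}(B^*)},
\]
with $C_1=C_1(N,p,\Lambda_{s,p}(B_1(0)))$. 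A telescoping sum then yields $|\varphi(x)-\varphi(m)|\le C_2\,\delta^{\,s-N/p}\,[\varphi]_{W^{s,p}(B^*)}$. Running the symmetric construction for $y$ and combining via the triangle inequality $|\varphi(x)-\varphi(y)|\le|\varphi(x)-\varphi(m)|+|\varphi(m)-\varphi(y)|$ would deliver the claimed Morrey-type estimate.

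The main obstacle is fitting the constant into the specific shape $4^{(4N+1)/p}\bigl(2^{N+2p}/(\omega_N\,\Lambda_{s,p}(B_1(0)))\bigr)^{1/p}$. The naive telescoping contributes a factor $\sum_{k\ge 0}2^{-k(s-N/p)}=(1-2^{-(s-N/p)})^{-1}$ that \emph{does} depend on $s-N/p$, whereas the target constant is independent of $s$ apart from $\Lambda_{s,p}(B_1(0))$. To match the stated form the chain must be run at step factor $4$ rather than $2$---this would naturally produce the $4^{4N/p}$ factor from $N$-dimensional ball volumes and an extra $4^{1/p}$ from the single iteration---and, mimicking the proof of Lemma \ref{lm:1stjuly} itself, the per-scale step should be carried out not by invoking its final statement but by splitting $|\varphi(x_{k+1})-\varphi(x_k)|$ into an $L^p$-average of $\varphi(x_k)-\varphi(\cdot)$ and an $L^p$-average of $\varphi(\cdot)-\varphi(x_{k+1})$ over a concentric sub-ball of $B_{R_k}(x_k)$, each controlled by $\Lambda_{s,p}$. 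This thrifty bookkeeping, rather than the overall chaining idea, is where the delicate work of the proof will concentrate.
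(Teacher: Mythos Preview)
Your chaining idea is natural and would yield \emph{some} localized Morrey estimate, but it cannot produce the constant in the statement. The telescoping sum $\sum_{k\ge 0} R_k^{\,s-N/p}$ with $R_k = 2^{-k}(\delta/2)$ contributes the factor $(1-2^{-(s-N/p)})^{-1}$, and switching to step factor $4$ merely replaces this by $(1-4^{-(s-N/p)})^{-1}$; in either case the factor blows up as $s\searrow N/p$ and is not of the form asserted in \eqref{morreymodif}. No amount of ``thrifty bookkeeping'' inside each dyadic step removes this, because the $s$-dependence enters through the number of scales you must sum, not through the per-scale constant. Since the whole point of this lemma in the paper is to feed into the sharp $s\searrow N/p$ analysis (see the Remark immediately following it and Section~\ref{sec:8}), a constant with an extra $(1-2^{-(s-N/p)})^{-1}$ would be useless downstream.

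The paper sidesteps the boundary obstruction by a completely different device: it \emph{extends} $\varphi$ from $B_{\delta/2}\!\left(\tfrac{x+y}{2}\right)$ to the four-times larger ball $B_{2\delta}\!\left(\tfrac{x+y}{2}\right)$ via a linear extension operator with seminorm bound $4^{(4N+1)/p}$ (this is where that factor comes from). In the enlarged ball both $x$ and $y$ sit at distance $\delta/2$ from the center while the radius is $2\delta$, so they are comfortably interior. One then mollifies and applies the pointwise estimate \eqref{mor} from the proof of Theorem~\ref{thm:morrey} with center $\tfrac{x+y}{2}$ and $T=2$ on the ball $B_\delta\!\left(\tfrac{x+y}{2}\right)$, once for $x$ and once for $y$, and adds via the triangle inequality. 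No chaining, no geometric series, and the constant comes out exactly as stated.
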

\begin{proof}
We can suppose that 
\[
[\varphi]_{W^{s,p}(B_\frac{\delta}{2}(\frac{x+y}{2}))}<+\infty,
\]
otherwise there is nothing to prove. Then, thanks to the linear extension operator of \cite[Proposition 3.1]{BB},  there exists $\widetilde u \in  W^{s,p}(B_{2\delta}((x+y)/2)$  such that $ \tilde \varphi=\varphi$ on $\overline{B_{\delta/2}((x+y)/2)}$ and 
 \begin{equation}
 \label{eq:extens} [\tilde \varphi ]_{W^{s,p}(B_{2\delta}(\frac {x + y}{2}))}\leq 4^\frac{4\,N+1}{p}\,   [\varphi]_{W^{s,p}(B_\frac{\delta}{2}(\frac {x + y}{2}))} .
 \end{equation}
By the construction of this operator, we also know that $\widetilde{\varphi}\in C(\overline{B_{2\delta}((x+y)/2)})$. Let $\rho\in C^\infty_0(\mathbb{R}^n)$ be the usual Friedrichs mollifier supported on the unit ball $B_1(0)$. We define the sequence of smoothing kernels
\[
\rho_n(x)=n^N\,\rho(n\,x),\qquad \mbox{ for } n\in\mathbb{N}\setminus\{0\}.
\]
By the properties of the convolutions, we have
\begin{equation}
\label{approssima_prat}
\lim_{n\to\infty} [\varphi_n-\widetilde\varphi]_{W^{s,p}(B_{\delta}(\frac {x + y}{2}))}=0\qquad \mbox{ and }\qquad \lim_{n\to\infty} \|\varphi_n-\widetilde\varphi\|_{L^\infty(B_{\delta}(\frac {x + y}{2}))}=0,
\end{equation}
where $\varphi_n=\widetilde\varphi\ast \rho_n$. 
By proceeding as for the estimate \eqref{mor} with $(x+y)/2$ in place of $y$ and choosing $T=2$, for every $n\in\mathbb{N}$ we have
\[
\left|\varphi_n(x)-\varphi_n\left(\frac{x+y}{2}\right)\right|\le \left(\frac{2^{p\,(s+1)}}{\omega_N\,\Lambda_{s,p}(B_1(0))}\right)^\frac{1}{p}\,\left(\frac{\delta}{2}\right)^{s-\frac{N}{p}}\,[\varphi_n]_{W^{s,p}(B_{\delta }(\frac{x+y}{2}))},
\]
and
\[
\left|\varphi_n(y)-\varphi_n\left(\frac{x+y}{2}\right)\right|\le \left(\frac{2^{p\,(s+1)}}{\omega_N\,\Lambda_{s,p}(B_1(0))}\right)^\frac{1}{p}\,\left(\frac{\delta}{2}\right)^{s-\frac{N}{p}}\,[\varphi_n]_{W^{s,p}(B_{\delta }(\frac{x+y}{2}))}.
\]
By summing up and using the triangle inequality, we get
\[
\frac{|\varphi_n(x)-\varphi_n(y)|}{|x-y|^{s-\frac{N}{p}}}\le  \left(\frac{2^{N+2\,p}}{\omega_N\,\Lambda_{s,p}(B_1(0))}\right)^\frac{1}{p}\,[\varphi_n]_{W^{s,p}(B_{\delta }(\frac{x+y}{2}))}.
\]
By taking the limit as $n$ goes to $\infty$ and using \eqref{approssima_prat}, we get
\[
\frac{|\widetilde\varphi(x)-\widetilde\varphi(y)|}{|x-y|^{s-\frac{N}{p}}}\le  \left(\frac{2^{N+2\,p}}{\omega_N\,\Lambda_{s,p}(B_1(0))}\right)^\frac{1}{p}\,[\widetilde\varphi]_{W^{s,p}(B_{\delta }(\frac{x+y}{2}))}.
\]
Finally, by using \eqref{eq:extens} and the fact that $\varphi(x)=\widetilde{\varphi}(x)$, $\varphi(y)=\widetilde{\varphi}(y)$, we conclude.
\end{proof}
\begin{remark}[A subtle detail]
We observe that in one-dimensional case $N=1$, from \eqref{morreymodif} we get for $s\,p>1$ and $x<y$ the following estimate\footnote{Indeed, for $N=1$ and $x<y$ we have
\[
B_\frac{\delta}{2}\left(\frac {x + y}{2}\right)=\left(\frac{x+y}{2}-\frac{\delta}{2},\frac{x+y}{2}+\frac{\delta}{2}\right)=(x,y).
\]}
\begin{equation}
\label{onedimensional}
\frac{|\varphi(x)-\varphi(y)|^p}{|x-y|^{s\,p-1}}\le \frac{C_p}{\Lambda_{s,p}((-1,1))}\,\iint_{(x,y)\times(x,y)} \frac{|\varphi(t)-\varphi(\tau)|^p}{|t-\tau|^{1+s\,p}}\,dt\,d\tau,
\end{equation}
which is the same as \cite[Lemma 4.5]{DF}, the latter in turn taken from \cite[Lemma 1.1]{GRR}. However, we point out that in \cite{DF} this estimate is obtained with a constant $C_{s,p}>0$ such that
\[
0<\lim_{s\searrow \frac{1}{p}} (s\,p-1)^p\,C_{s,p}<+\infty.
\]
On the contrary, in \eqref{onedimensional} the asymptotic behaviour of the constant is dictated by $\Lambda_{s,p}((-1,1))$, which enjoys the following property
\[
\limsup_{s\searrow \frac{1}{p}}\frac{(s\,p-1)^{p-1}}{\Lambda_{s,p}((-1,1))}<+\infty,
\]
see \eqref{loweralpha} below, in conjunction with Lemma \ref{lemma:fastidio}.
\end{remark}
\begin{lemma}
\label{lm:morrey_firoj}
Let $R>0$ and let $x,\, y\in\mathbb{R}^N\setminus B_{2R}(0)$ be two distinct points. Let $N\ge 1$, $0<s<1$ and $1<p<\infty$ be such that $s\,p>N$.
There exists a constant $\widetilde{C}=\widetilde{C}(N,s,p)>0$ such that for every $\varphi\in \mathcal{W}^{s,p}(\mathbb{R}^N)$ 
we have
	\[
	\frac{|\varphi(x)-\varphi(y)|}{|x-y|^{s-\frac{N}{p}}}\leq \widetilde{C}\,[\varphi]_{W^{s,p}(\mathbb{R}^N\setminus B_R(0))}.
	\]
\end{lemma}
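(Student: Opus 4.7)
My plan is to reduce to the local Morrey inequality of Lemma \ref{lm:pratignano} by identifying a ball $B_{\delta/2}(m)$ contained in $\mathbb{R}^N \setminus B_R(0)$ when possible, and chaining Lemma \ref{lm:pratignano} through intermediate points otherwise. Let $\delta = |x-y|$ and $m = (x+y)/2$. First, if $\delta \leq \min(|x|,|y|)/2$, then $|m| \geq \min(|x|,|y|) - \delta/2 \geq 3\min(|x|,|y|)/4$, which gives $|m| - \delta/2 \geq \min(|x|,|y|)/2 \geq R$; hence $B_{\delta/2}(m) \subseteq \mathbb{R}^N \setminus B_R(0)$ and Lemma \ref{lm:pratignano} immediately yields
\[
\frac{|\varphi(x)-\varphi(y)|}{|x-y|^{s-N/p}} \leq C\,[\varphi]_{W^{s,p}(B_{\delta/2}(m))} \leq C\,[\varphi]_{W^{s,p}(\mathbb{R}^N\setminus B_R(0))},
\]
with $C = C(N, s, p)$.

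In the complementary regime $\delta > \min(|x|,|y|)/2$ the midpoint may lie close to or inside $B_R(0)$, so I would insert intermediate points to construct a chain $x = z_0, z_1, \ldots, z_K = y$ entirely contained in $\mathbb{R}^N \setminus B_{2R}(0)$ such that each consecutive pair $(z_i, z_{i+1})$ satisfies the first-regime hypothesis $|z_i - z_{i+1}| \leq \min(|z_i|, |z_{i+1}|)/2$. For $N \geq 2$ a workable recipe is: starting from $x$, move radially outward through $\xi_k = (3/2)^k x$ up to a scale $(3/2)^{K_x}|x|$ of order $\delta$; then traverse a bounded (dimension-dependent) number of chords along a sphere of radius of order $\delta$ to reach the $y$-ray; then move radially inward to $y$. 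Each radial step has length $\frac{1}{2}(3/2)^k |x|$, and the geometric sum of its $(s-N/p)$-th powers telescopes to $\lesssim \delta^{s-N/p}$; the finitely many spherical steps each contribute $\lesssim \delta^{s-N/p}$. Summing the first-regime estimates along the chain via the triangle inequality produces the claimed bound with $\widetilde C = \widetilde C(N,s,p)$.

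The main obstacle is the one-dimensional case $N=1$ with $x,y$ on opposite components of $\mathbb{R}\setminus B_R(0)$ (say $x > 2R$ and $y < -2R$), where no chain in $\mathbb{R}\setminus B_R(0)$ can connect them. For this case I would switch to an averaging argument exploiting the cross-integral contribution to $[\varphi]^p_{W^{s,p}(\mathbb{R}\setminus B_R(0))}$. Setting $\rho = \min((|x|-R)/2,(|y|-R)/2) \geq R/2$, one has $B_\rho(x) \subseteq (R,\infty)$ and $B_\rho(y) \subseteq (-\infty,-R)$, and for every $s \in B_\rho(x)$, $t \in B_\rho(y)$,
\[
|\varphi(x)-\varphi(y)| \leq |\varphi(x)-\varphi(s)| + |\varphi(s)-\varphi(t)| + |\varphi(t)-\varphi(y)|.
\]
The first and third terms are bounded uniformly in $(s,t)$ by the first step (applied on the same component, whose midpoint balls stay in $\mathbb{R}\setminus B_R(0)$), and after averaging over $(s,t)$ the middle term is controlled by the restriction of $[\varphi]_{W^{s,p}(\mathbb{R}\setminus B_R(0))}^p$ to pairs $(s,t)$ straddling the origin, using $|s-t| \leq \delta + 2\rho$ in the denominator together with a balancing choice of $\rho$.
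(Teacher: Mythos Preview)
Your chaining argument for $N\ge 2$ is correct and is essentially the geometric construction underlying \cite[Lemma~6.2]{HS}, which is exactly what the paper invokes (with Lemma~\ref{lm:pratignano} replacing the local estimate there). One minor imprecision: the ``direct'' case can be enlarged to $\delta\le\max(|x|,|y|)/2$, since then $|m|\ge\max(|x|,|y|)-\delta/2$ already yields $B_{\delta/2}(m)\subseteq\mathbb{R}^N\setminus B_R(0)$; this only shrinks the region where the chain is needed and ensures that in the residual case both $|x|$ and $|y|$ are comparable to $\delta$, so the spherical steps contribute the right power.

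The one-dimensional opposite-side case, however, has a genuine gap. With your stated choice $\rho=\min\big((|x|-R)/2,(|y|-R)/2\big)$, after Jensen the averaged cross-term is controlled by
\[
\frac{(\delta+2\rho)^{(1+s\,p)/p}}{\rho^{2/p}}\,[\varphi]_{W^{s,p}(\mathbb{R}\setminus B_R(0))},
\]
and matching this to $\delta^{s-1/p}$ forces $\rho\gtrsim\delta$. But the geometric constraint $B_\rho(x)\subseteq(R,\infty)$ imposes $\rho<\min(|x|,|y|)$, while $\delta=|x|+|y|$; when $|x|$ and $|y|$ are unbalanced (say $|x|=2R$, $|y|=MR$ with $M$ large) the ratio $\delta/\rho$ is unbounded, so no admissible ``balancing choice of $\rho$'' exists and the constant you obtain depends on $x,y$. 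The easy repair is to first use the same-side estimate (which in dimension one is just Lemma~\ref{lm:pratignano}, since the midpoint ball is the interval between the two points and stays inside the half-line) to replace $x$ by $x'=\delta$ and $y$ by $y'=-\delta$; each of these moves costs at most $C\,\delta^{s-1/p}[\varphi]_{W^{s,p}(\mathbb{R}\setminus B_R(0))}$. For the symmetric pair $(x',y')$ one may then take $\rho$ of order $\delta$ and the averaging closes with a constant depending only on $s$ and $p$.
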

\begin{proof}
It is sufficient to repeat the proof of \cite[Lemma 6.2]{HS}, by using this time Lemma \ref{lm:pratignano}, in place of \cite[equation (6.1)]{HS}. We leave the details to the reader. 
\end{proof}
We are ready for the main result of this section. This is an extension of \cite[Theorem 6.1]{HS}.
\begin{theorem}
\label{thm:maximize Holder ratio}
	Let $N\ge 1$, $0<s\le 1$ and $1<p<\infty$ be such that $s\,p>N$. For every $\varphi\in \mathcal{W}^{s,p}(\mathbb{R}^N)$ there exist two distinct points $x_0,\,y_0\in \mathbb{R}^N$ such that
	\[
	[\varphi]_{C^{0,\alpha_{s,p}}(\mathbb{R}^N)}=\frac{|\varphi(x_0)-\varphi(y_0)|}{|x_0-y_0|^{\alpha_{s,p}}}. 
\]
\end{theorem}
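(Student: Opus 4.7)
The plan is to follow the strategy of Hynd--Seuffert adapted to the fractional setting, using Lemmas \ref{lm:pratignano} and \ref{lm:morrey_firoj} as the two key ingredients (for $s=1$ this is exactly \cite[Theorem 6.1]{HS}, so I focus on $0<s<1$). Set $L := [\varphi]_{C^{0,\alpha_{s,p}}(\mathbb{R}^N)}$; if $L=0$ then $\varphi$ is constant and any distinct pair works, so I may assume $L>0$. Pick a maximizing sequence of distinct pairs $(x_n,y_n)$ so that the H\"older ratio tends to $L$, and write $\delta_n=|x_n-y_n|$, $z_n=(x_n+y_n)/2$. The goal is to show that, up to extraction, both $(x_n)$ and $(y_n)$ are bounded and $\delta_n$ is bounded away from $0$, so that the limit pair is distinct and inherits the optimality by continuity of $\varphi$ (which holds since $\varphi \in \dot C^{0,\alpha_{s,p}}(\mathbb{R}^N)$).

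First I would rule out $\delta_n\to 0$. Applying Lemma \ref{lm:pratignano} on any ball $B_r(x_0) \supseteq B_{\delta_n/2}(z_n)$ (such an $r$ is freely available), we obtain
\[
\frac{|\varphi(x_n)-\varphi(y_n)|}{|x_n-y_n|^{\alpha_{s,p}}} \;\leq\; C\,[\varphi]_{W^{s,p}(B_{\delta_n/2}(z_n))}.
\]
Since $[\varphi]_{W^{s,p}(\mathbb{R}^N)}<+\infty$, absolute continuity of the underlying double integral forces the right-hand side to $0$ as the Lebesgue measure of $B_{\delta_n/2}(z_n)\times B_{\delta_n/2}(z_n)$ collapses, contradicting $L>0$.

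Next I would rule out escape to infinity; say $|y_n|\to\infty$ along a subsequence. If $|x_n|\to\infty$ as well, then for each fixed $R>0$ both points eventually lie in $\mathbb{R}^N\setminus B_{2R}(0)$, and Lemma \ref{lm:morrey_firoj} bounds the ratio by $\widetilde{C}\,[\varphi]_{W^{s,p}(\mathbb{R}^N\setminus B_R(0))}$; the $W^{s,p}$-tail vanishes as $R\to\infty$, contradicting $L>0$. If instead $(x_n)$ stays bounded and, up to a subsequence, $x_n\to x_*\in\mathbb{R}^N$, then $\delta_n\to\infty$; for each $R>|x_*|+1$ and $n$ large I insert a point $w_n$ on the segment $[x_n,y_n]$ with $|w_n|=2R+1$ and split
\[
\frac{|\varphi(x_n)-\varphi(y_n)|}{|x_n-y_n|^{\alpha_{s,p}}} \;\leq\; \frac{|\varphi(x_n)-\varphi(w_n)|}{|x_n-y_n|^{\alpha_{s,p}}} + \frac{|\varphi(w_n)-\varphi(y_n)|}{|x_n-y_n|^{\alpha_{s,p}}}.
\]
The first summand is $\leq L\,|x_n-w_n|^{\alpha_{s,p}}/|x_n-y_n|^{\alpha_{s,p}}$, and since $|x_n-w_n|$ is uniformly bounded (by $|x_*|+2R+2$) while $|x_n-y_n|\to\infty$, it vanishes. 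The second summand, via Lemma \ref{lm:morrey_firoj} applied to $(w_n,y_n)\in(\mathbb{R}^N\setminus B_{2R}(0))^2$, is bounded by $\widetilde{C}\,(|w_n-y_n|/|x_n-y_n|)^{\alpha_{s,p}}\,[\varphi]_{W^{s,p}(\mathbb{R}^N\setminus B_R(0))}$; using $|w_n-y_n|/|x_n-y_n|\to 1$, passing to the $\limsup$ and then letting $R\to\infty$ yields $L\leq 0$, a contradiction.

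Having excluded both $\delta_n\to 0$ and any escape to infinity, $(x_n)$ and $(y_n)$ are bounded with $\delta_n$ bounded away from $0$; Bolzano--Weierstrass produces convergent subsequences $x_n\to x_0$, $y_n\to y_0$ with $x_0\neq y_0$, and continuity of $\varphi$ forces $|\varphi(x_0)-\varphi(y_0)|/|x_0-y_0|^{\alpha_{s,p}}=L$. I expect the main obstacle to be the one-point-at-infinity scenario in the escape analysis, which requires the two-scale triangle-inequality splitting combined with the $R\to\infty$ decay of the $W^{s,p}$-tail; the rest is a routine extraction argument.
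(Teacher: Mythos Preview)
Your proposal is correct and follows essentially the same approach as the paper: both rely on Lemma~\ref{lm:pratignano} to rule out $\delta_n\to 0$ via absolute continuity of the double integral, and on Lemma~\ref{lm:morrey_firoj} (together with vanishing of the $W^{s,p}$-tail on $\mathbb{R}^N\setminus B_R(0)$ as $R\to\infty$) to rule out escape to infinity. Your argument is in fact more self-contained than the paper's, which largely defers to the detailed case analysis of \cite{HS} and only records the necessary substitutions; one minor difference is that the paper, following \cite{HS}, treats $N=1$ separately via the one-dimensional estimate \eqref{onedimensional} rather than Lemma~\ref{lm:morrey_firoj}, but since the latter is stated for all $N\ge 1$ your unified treatment is admissible.
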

\begin{proof}
The case $s=1$ is contained in \cite{HS}, thus we consider $0<s<1$.
We can suppose that $u$ is not constant, otherwise the statement is straightforward.
	For every $n\in\mathbb{N}$, we take $x_n,y_n\in\mathbb{R}^N$ a pair of points such that 
	\begin{equation}
	\label{distanti}
	[u]_{C^{0,\alpha_{s,p}}(\mathbb{R}^N)}-\frac{1}{n+1}\le \frac{|u(x_n)-u(y_n)|}{|x_n-y_n|^{\alpha_{s,p}}}.
	\end{equation}
The goal of the proof is proving that	
	\begin{equation}
	\label{nonsazzera}
		\liminf_{n\to\infty}|x_n-y_n|>0,
	\end{equation}
and
\begin{equation}\label{max sequnce bdd}
		\sup_{n\in\mathbb{N}}(|x_n|+|y_n|)<+\infty.
	\end{equation}
Indeed, from \eqref{max sequnce bdd}, we would get that the maximizing sequence $\{(x_n,y_n)\}_{n\in\mathbb{N}}$ is precompact in $\mathbb{R}^N\times\mathbb{R}^N$. Thus, up to pass to a subsequence, we get that there exists $(\overline{x},\overline{y})\in\mathbb{R}^N\times \mathbb{R}^N$ such that
\[
\lim_{n\to\infty} x_n=\overline{x} \qquad \mbox{ and }\qquad \lim_{n\to\infty} y_n=\overline{y}.
\]
Moreover, by \eqref{nonsazzera}, we would have $\overline{x}\not=\overline{y}$. 
By using this information in \eqref{distanti}, we would obtain
\[
[u]_{C^{0,\alpha_{s,p}}(\mathbb{R}^N)}= \lim_{n\to\infty}\frac{|u(x_n)-u(y_n)|}{|x_n-y_n|^{\alpha_{s,p}}}=\frac{|u(\overline{x})-u(\overline{y})|}{|\overline{x}-\overline{y}|^{\alpha_{s,p}}}.
\]
The property \eqref{nonsazzera} can be proved exactly as in \cite{HS}: it is sufficient to use Lemma \ref{lm:pratignano}, in place of \cite[equation (6.1)]{HS}.
\par
As for \eqref{max sequnce bdd}, the proof is quite lengthy, but here as well it is sufficient to reproduce almost verbatim the arguments in \cite{HS}. We just list the modifications which are needed to cope with the fractional case: at first, since $u\in\mathcal{W}^{s,p}(\mathbb{R}^N)$, the function
\[
G_u(x):=\left(\int_{\mathbb{R}^N}\frac{|u(x)-u(y)|^p}{|x-y|^{N+s\,p}}\, dy\right)^\frac{1}{p},\qquad \mbox{ for every }x\in\mathbb{R}^N,
\]
belongs to $L^p(\mathbb{R}^N)$. Thus, for every $\varepsilon>0$ there exists $R_\varepsilon>0$ such that 
\[
\left(\int_{\mathbb{R}^N\setminus B_{R_\varepsilon}(0)}|G_u(x)|^p\,dx\right)^{\frac{1}{p}}< \varepsilon.
\]
Then in \cite{HS} the two cases $N=1$ and $N\ge 2$ are discussed separately: for dimension $N=1$, one can repeat the proof of \cite{HS}, by using in place of the estimate
\[
|u(x)-u(y)|\le |x-y|^\frac{1}{p}\,\left(\int_x^y |u'|^p\,dt\right)^\frac{1}{p}, \qquad \mbox{ for every } x,y\in\mathbb{R}\ \mbox{ with } x<y,
\]
its fractional counterpart \eqref{onedimensional}.
\par
For dimension $N\ge 2$, the only further modification needed with respect to \cite{HS} is the use of Lemma \ref{lm:morrey_firoj}, in place of \cite[Lemma 6.2]{HS}.
\end{proof}

\section{Existence of extremals}
\label{sec:6}

We still denote by $\mathfrak{m}_{s,p}(\mathbb{R}^N)$ the sharp constant defined in \eqref{morreysharp}.
\begin{theorem}
\label{thm:existence}
Let $N\ge 1$, $0<s\le 1$ and $1<p<\infty$ be such that $s\,p>N$. Then we have 
\[
\mathfrak{m}_{s,p}(\mathbb{R}^N)=\inf_{\varphi\in \mathcal{W}^{s,p}(\mathbb{R}^N)} \Big\{[\varphi]_{W^{s,p}(\mathbb{R}^N)}^p\, : \, [\varphi]_{C^{0,\alpha_{s,p}}(\mathbb{R}^N)}=1\Big\}.
\]
Moreover, the infimum on the right-hand side is attained.
\end{theorem}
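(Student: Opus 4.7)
The plan is to run the direct method of the calculus of variations for the enlarged problem posed over $\mathcal{W}^{s,p}(\mathbb{R}^N)$, and then to close the loop with a density argument. Denote provisionally the right-hand side infimum by $\mathfrak{m}_{s,p}^\star$. The inequality $\mathfrak{m}_{s,p}^\star\le \mathfrak{m}_{s,p}(\mathbb{R}^N)$ is automatic, since $C^\infty_0(\mathbb{R}^N)\subseteq \mathcal{W}^{s,p}(\mathbb{R}^N)$ and both seminorms coincide with those used in the definition \eqref{morreysharp}. Existence of a minimizer for $\mathfrak{m}_{s,p}^\star$ will be obtained first, and the opposite inequality will be derived at the end, by approximating the minimizer via $C^\infty_0(\mathbb{R}^N)$ functions.

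Take a minimizing sequence $\{u_n\}_{n\in\mathbb{N}}\subseteq\mathcal{W}^{s,p}(\mathbb{R}^N)$ with $[u_n]_{C^{0,\alpha_{s,p}}(\mathbb{R}^N)}=1$ and $[u_n]_{W^{s,p}(\mathbb{R}^N)}^p\to \mathfrak{m}_{s,p}^\star$. By Theorem \ref{thm:maximize Holder ratio}, for each $n$ there exist distinct $x_n,y_n\in\mathbb{R}^N$ realizing the H\"older quotient, i.e.\ $|u_n(x_n)-u_n(y_n)|=|x_n-y_n|^{\alpha_{s,p}}$. I now use the four invariances shared by both seminorms $[\,\cdot\,]_{W^{s,p}(\mathbb{R}^N)}$ and $[\,\cdot\,]_{C^{0,\alpha_{s,p}}(\mathbb{R}^N)}$: translations, rotations, addition of constants, and (by dimensional equivalence, since the ratio is scale-invariant) dilations. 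Translating so that $x_n=0$ and $u_n(0)=0$, rotating so that $y_n$ lies on the positive axis spanned by $e_1$, dilating so that $|y_n|=1$, and changing sign if needed, I reduce to a new minimizing sequence, still denoted $\{u_n\}$, verifying
\[
u_n(0)=0,\qquad u_n(e_1)=1,\qquad [u_n]_{C^{0,\alpha_{s,p}}(\mathbb{R}^N)}=1,\qquad [u_n]_{W^{s,p}(\mathbb{R}^N)}^p\longrightarrow \mathfrak{m}_{s,p}^\star.
\]
The H\"older bound combined with $u_n(0)=0$ forces $|u_n(x)|\le |x|^{\alpha_{s,p}}$, so $\{u_n\}$ is locally equibounded; equicontinuity is automatic. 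By Arzel\`a–Ascoli and a diagonal extraction, along a subsequence $u_n\to u$ locally uniformly on $\mathbb{R}^N$. The limit $u$ is continuous, satisfies $u(0)=0$ and $u(e_1)=1$ (hence $[u]_{C^{0,\alpha_{s,p}}(\mathbb{R}^N)}\ge 1$), and by pointwise passage to the limit in each difference quotient $[u]_{C^{0,\alpha_{s,p}}(\mathbb{R}^N)}\le \liminf_n[u_n]_{C^{0,\alpha_{s,p}}(\mathbb{R}^N)}=1$. Fatou's lemma on $\mathbb{R}^N\times\mathbb{R}^N$ gives
\[
[u]_{W^{s,p}(\mathbb{R}^N)}^p \le \liminf_{n\to\infty}[u_n]_{W^{s,p}(\mathbb{R}^N)}^p=\mathfrak{m}_{s,p}^\star,
\]
so $u\in\mathcal{W}^{s,p}(\mathbb{R}^N)$ is admissible and is therefore an extremal.

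It remains to verify $\mathfrak{m}_{s,p}^\star=\mathfrak{m}_{s,p}(\mathbb{R}^N)$. I use the isomorphism $\mathcal{W}^{s,p}(\mathbb{R}^N)/\!\sim_C\,\cong\,\mathscr{D}^{s,p}_0(\mathbb{R}^N)$ recalled in the family-tree remark: for the extremal $u$ there exists $\{\varphi_n\}\subseteq C^\infty_0(\mathbb{R}^N)$ with $[u-\varphi_n]_{W^{s,p}(\mathbb{R}^N)}\to 0$. Applying Morrey's inequality \eqref{morrey_intro} to the differences $\varphi_n-\varphi_m\in C^\infty_0(\mathbb{R}^N)$, the sequence is Cauchy also in $[\,\cdot\,]_{C^{0,\alpha_{s,p}}(\mathbb{R}^N)}$, its limit agrees with $u$ up to an additive constant, and hence $[\varphi_n]_{C^{0,\alpha_{s,p}}(\mathbb{R}^N)}\to [u]_{C^{0,\alpha_{s,p}}(\mathbb{R}^N)}=1$. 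Evaluating the ratio along $\{\varphi_n\}$ gives $\mathfrak{m}_{s,p}(\mathbb{R}^N)\le\mathfrak{m}_{s,p}^\star$, closing the identity. The main obstacle in this scheme is ruling out loss of compactness of the minimizing sequence under the non-compact group of symmetries of the problem, and it is precisely here that Theorem \ref{thm:maximize Holder ratio} is crucial: by guaranteeing that the H\"older supremum is an actual maximum, it allows us to use the translation/dilation/rotation invariance to pin the oscillation between two fixed reference points, so that no mass escapes to infinity or concentrates.
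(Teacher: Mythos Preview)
Your proof is correct and follows essentially the same approach as the paper: normalization of a minimizing sequence via Theorem \ref{thm:maximize Holder ratio} and the symmetry group, Arzel\`a--Ascoli plus Fatou for existence, and a density argument for the equality of the two infima. The only cosmetic differences are the order (the paper proves the equality first, then existence) and the density tool: the paper invokes Proposition \ref{prop:homo} for $\mathscr{D}^{s,p}_0(\mathbb{R}^N\setminus\{0\})$ together with Lemma \ref{lm:pratignano} applied to $\varphi_n-\varphi$, whereas you use the quotient identification $\mathcal{W}^{s,p}(\mathbb{R}^N)/\!\sim_C\,\cong \mathscr{D}^{s,p}_0(\mathbb{R}^N)$ and Morrey's inequality on $C^\infty_0$ applied to $\varphi_n-\varphi_m$; one small point worth making explicit in your write-up is why the $[\,\cdot\,]_{C^{0,\alpha_{s,p}}}$--limit of $\{\varphi_n\}$ coincides with $u$ up to a constant (Fatou on the double integral from $[u-\varphi_n]_{W^{s,p}(\mathbb{R}^N)}\to 0$ plus local uniform convergence does it).
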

\begin{proof}
Since  $C^\infty_0(\mathbb{R}^N)\subseteq \mathcal{W}^{s,p}(\mathbb{R}^N)$, we immediately obtain
\[
\mathfrak{m}_{s,p}(\mathbb{R}^N)\geq \inf_{\varphi\in \mathcal{W}^{s,p}(\mathbb{R}^N)} \Big\{[\varphi]_{W^{s,p}(\mathbb{R}^N)}^p\, : \, [\varphi]_{C^{0,\alpha_{s,p}}(\mathbb{R}^N)}=1\Big\}.
\]
For the reverse inequality, we observe at first that
\[
\begin{split}
\inf_{\varphi\in \mathcal{W}^{s,p}(\mathbb{R}^N)}& \Big\{[\varphi]_{W^{s,p}(\mathbb{R}^N)}^p\, : \, [\varphi]_{C^{0,\alpha_{s,p}}(\mathbb{R}^N)}=1\Big\}\\
&=\inf_{\varphi\in \mathcal{W}^{s,p}(\mathbb{R}^N)} \Big\{[\varphi]_{W^{s,p}(\mathbb{R}^N)}^p\, : \, [\varphi]_{C^{0,\alpha_{s,p}}(\mathbb{R}^N)}=1,\, \varphi(0)=0\Big\},
\end{split}
\]
since both seminorms are invariant with respect to the addition of constants. We then use that 
\[
\mathscr{D}^{s,p}_0(\mathbb{R}^N\setminus\{0\})=\Big\{\varphi\in \mathcal{W}^{s,p}(\mathbb{R}^N)\, :\, \varphi(0)=0\Big\},
\]
thanks to Proposition \ref{prop:homo}. 
In particular, for every $\varphi\in\mathcal{W}^{s,p}(\mathbb{R}^N)$ such that 
\[
\varphi(0)=0\qquad \mbox{ and }\qquad [\varphi]_{C^{0,\alpha_{s,p}}(\mathbb{R}^N)}=1,
\] 
there exists a sequence $\{\varphi_n\}_{n\in\mathbb{N}}\subseteq C^\infty_0(\mathbb{R}^N\setminus\{0\})$ such that
\begin{equation}
\label{approssima}
\lim_{n\to\infty} [\varphi_n-\varphi]_{W^{s,p}(\mathbb{R}^N)}=0.
\end{equation}
By applying Lemma \ref{lm:pratignano} to each function $\varphi_n-\varphi$, we have 
\[
\frac{|\varphi_n(x)-\varphi(x)-(\varphi_n(y)-\varphi(y))|}{|x-y|^{s-\frac{N}{p}}}\le  C\,[\varphi_n-\varphi]_{W^{s,p}(B_\frac{\delta}{2}(\frac{x+y}{2}))}\le C\,[\varphi_n-\varphi]_{W^{s,p}(\mathbb{R}^N)},
\]
which holds for every pair of distinct points $x,y\in\mathbb{R}^N$. By taking the supremum over $x$ and $y$ and using \eqref{approssima}, we get
\[
\lim_{n\to\infty} [\varphi_n-\varphi]_{C^{0,\alpha_{s,p}}(\mathbb{R}^N)}=0 \qquad \mbox{ and thus }\qquad \lim_{n\to\infty} [\varphi_n]_{C^{0,\alpha_{s,p}}(\mathbb{R}^N)}=[\varphi]_{C^{0,\alpha_{s,p}}(\mathbb{R}^N)}=1,
\]
as well. By using $\varphi_n/[\varphi_n]_{C^{0,\alpha_{s,p}}(\mathbb{R}^N)}^p$ in the definition of $\mathfrak{m}_{s,p}(\mathbb{R}^N)$, we thus get
\[
\mathfrak{m}_{s,p}(\mathbb{R}^N)\le \lim_{n\to\infty} \frac{[\varphi_n]_{W^{s,p}(\mathbb{R}^N)}^p}{[\varphi_n]_{C^{0,\alpha_{s,p}}(\mathbb{R}^N)}^p}=[\varphi]_{W^{s,p}(\mathbb{R}^N)}^p.
\]
Finally, by taking the infimum over the admissible $\varphi$ on the right-hand side, we obtain that
\[ 
\mathfrak{m}_{s,p}(\mathbb{R}^N) \le\inf_{\varphi\in \mathcal{W}^{s,p}(\mathbb{R}^N)} \Big\{[\varphi]_{W^{s,p}(\mathbb{R}^N)}^p\, : \, [\varphi]_{C^{0,\alpha_{s,p}}(\mathbb{R}^N)}=1,\, \varphi(0)=0\Big\},
\]
as well.
\vskip.2cm\noindent
We now show existence of a minimizer. For $s=1$ this is contained in \cite[Lemma 2.1]{HS}, for the case $0<s<1$ we can reproduce almost verbatim the same argument. Let $\{u_k\}_{k\in\mathbb{N}}\subseteq \mathcal{W}^{s,p}(\mathbb{R}^N)$ be a minimizing sequence for our problem.
For each $k\in\mathbb N$ we choose a pair of distinct points $x_k,\,y_k\in\mathbb{R}^N$ such that 
\begin{equation}
\label{sequence_prop}
    1=[u_k]_{C^{0,\alpha_{s,p}}(\mathbb{R}^N)}=\frac{u_k(x_k)-u_k(y_k)}{|x_k-y_k|^{\alpha_{s,p}}}.
\end{equation}
Such a pair of points exists by Theorem \ref{thm:maximize Holder ratio}. Without loss of generality, we assumed that $u_k(x_k)>u_k(y_k)$.
We also take $O_k$ an orthogonal transformation satisfying
\[
O_k(\mathbf{e}_N)=\frac{y_k-x_k}{|y_k-x_k|},
\]
where $\mathbf{e}_N=(0,\cdots,0,1)\in\mathbb{R}^N.$ For $k\in\mathbb N$ and $z\in\mathbb{R}^N$, we then define the new sequence
\begin{equation*}
U_k(z):=\frac{\Big(u_k\left(|x_k-y_k|\,O_k(z)+x_k\right)-u_k(x_k)\Big)}{|x_k-y_k|^{\alpha_{s,p}}}.
\end{equation*}
Then, by the invariances of both the H\"{o}lder and Sobolev-Slobodecki\u{\i} seminorms, we get
\[
 [U_k]_{C^{0,\alpha_{s,p}}(\mathbb{R}^N)}=1,\qquad \mbox{ for every } k\in\mathbb{N},
\]
and 
\[
\lim_{k\to\infty}[U_k]^p_{W^{s,p}(\mathbb{R}^N)}=\mathfrak{m}_{s,p}(\mathbb{R}^N),
\]
i.e. this is still a minimizing sequence. Moreover, by construction, we have
$U_k(0)=0$ and by \eqref{sequence_prop}
\[
U_k(\mathbf{e}_N)=\frac{u_k(x_k)-u_k(y_k)}{|x_k-y_k|^{\alpha_{s,p}}}=1.
\]
By applying the Arzel\`{a}-Ascoli theorem and a standard diagonal argument on an increasing sequence of centered balls, we get a subsequence $\{U_{k_n}\}_{n\in\mathbb{N}}$ which converges to a continuous limit function $U$, uniformly on compact sets.
Hence, by the properties of $U_{k}$ and the local uniform convergence, we obtain
\[
U(0)=0,\qquad U(\mathbf{e}_N)=1\qquad \mbox{ and }\qquad [U]_{C^{0,\alpha_{s,p}}(\mathbb{R}^N)}=1.
\]
Moreover, by Fatou's lemma, we have 
\[
[U]^p_{W^{s,p}(\mathbb{R}^N)}\leq\liminf_{k\to\infty}[U_k]^p_{W^{s,p}(\mathbb{R}^N)}=\mathfrak{m}_{s,p}(\mathbb{R}^N).
\]
Thus, $U\in\mathcal{W}^{s,p}(\mathbb{R}^N)$ is a minimizer for our problem. This completes the proof of the theorem.
\end{proof}

\begin{definition}
We will say that $u\in \mathcal{W}^{s,p}(\mathbb{R}^N)$ is an {\it extremal for $\mathfrak{m}_{s,p}(\mathbb{R}^N)$} if $u$ is non-constant and
\[
[u]^p_{W^{s,p}(\mathbb{R}^N)}=\mathfrak{m}_{s,p}(\mathbb{R}^N)\,[u]^p_{C^{0,\alpha_{s,p}}(\mathbb{R}^N)}.
\]
\end{definition}

The previous result entails that the elements of the space $\widetilde{W}^{s,p}_0(\Omega)$ enjoy a Morrey inequality, as well. We explicitly state the relevant result, since this will be needed in the sequel.
\begin{proposition}
\label{prop:embed_unif}
Let $N\ge 1$, $0<s<1$  and $1<p<\infty$ be such that $s\,p>N$. Let $\Omega\subseteq\mathbb{R}^N$ be an open set. Then we have the continuous embedding
\[
\widetilde{W}^{s,p}_0(\Omega)\hookrightarrow C^{0,\alpha_{s,p}}(\overline\Omega)\cap C_0(\overline\Omega).
\]
More precisely, there exists a constant $C=C(N,s,p)>0$ such that
\[
\|\varphi\|^p_{L^\infty(\Omega)}\le C\,\left(\|\varphi\|_{L^p(\Omega)}\right)^\frac{N}{s}\,\left([\varphi]_{W^{s,p}(\mathbb{R}^N)}\right)^{p-\frac{N}{s}},\qquad \mbox{ for every } \varphi\in \widetilde{W}^{s,p}_0(\Omega),
\]
and
\[
[\varphi]^p_{C^{0,\alpha_{s,p}}(\overline\Omega)}\le \frac{1}{\mathfrak{m}_{s,p}(\mathbb{R}^N)}\,[\varphi]_{W^{s,p}(\mathbb{R}^N)}^p,\qquad \mbox{ for every } \varphi\in \widetilde{W}^{s,p}_0(\Omega).
\]
Finally, for the constant $\lambda_{p,\infty}^s(\Omega)$ defined by \eqref{lambda_endpoint}, we have
\[
\lambda_{p,\infty}^s(\Omega)=\inf_{\varphi\in \widetilde{W}^{s,p}_0(\Omega)} \Big\{[\varphi]^p_{W^{s,p}(\mathbb{R}^N)}\, :\, \|\varphi\|_{L^\infty(\Omega)}=1\Big\}.
\]
\end{proposition}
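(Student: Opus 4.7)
The plan is to derive all three assertions from a density argument that bootstraps Theorem~\ref{thm:morrey}. Since $C^\infty_0(\Omega)\subseteq C^\infty_0(\mathbb{R}^N)$, Theorem~\ref{thm:morrey} applied on $\mathbb{R}^N$ immediately yields
\[
[\varphi]^p_{C^{0,\alpha_{s,p}}(\overline\Omega)}\le [\varphi]^p_{C^{0,\alpha_{s,p}}(\mathbb{R}^N)}\le \mathfrak{m}_{s,p}(\mathbb{R}^N)^{-1}\,[\varphi]^p_{W^{s,p}(\mathbb{R}^N)},\qquad \text{for every } \varphi\in C^\infty_0(\Omega).
\]
For the $L^\infty$ estimate on $C^\infty_0(\Omega)$, I would pick any $x\in \mathbb{R}^N$ and radius $r>0$ and split
\[
\varphi(x) = \fint_{B_r(x)}\varphi(y)\,dy + \fint_{B_r(x)}\bigl(\varphi(x)-\varphi(y)\bigr)\,dy.
\]
Jensen's inequality controls the first term by $(\omega_N\, r^N)^{-1/p}\,\|\varphi\|_{L^p(\Omega)}$, while the H\"older bound just obtained controls the second by $\mathfrak{m}_{s,p}(\mathbb{R}^N)^{-1/p}\,[\varphi]_{W^{s,p}(\mathbb{R}^N)}\,r^{\alpha_{s,p}}$. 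Optimizing the free radius (the critical scale is $r^s\sim \|\varphi\|_{L^p(\Omega)}/[\varphi]_{W^{s,p}(\mathbb{R}^N)}$) produces the claimed interpolation inequality with a dimensional constant $C=C(N,s,p)>0$.

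Both bounds are then extended from $C^\infty_0(\Omega)$ to $\widetilde{W}^{s,p}_0(\Omega)$ by density. For $\varphi\in \widetilde{W}^{s,p}_0(\Omega)$, fix a sequence $\{\varphi_n\}\subseteq C^\infty_0(\Omega)$ converging to $\varphi$ in $W^{s,p}(\mathbb{R}^N)$. Applying the H\"older and $L^\infty$ estimates of the previous paragraph to the differences $\varphi_n-\varphi_m$ shows that $\{\varphi_n\}$ is simultaneously Cauchy in $\dot C^{0,\alpha_{s,p}}(\mathbb{R}^N)$ and in $L^\infty(\mathbb{R}^N)$. Hence it converges uniformly on $\mathbb{R}^N$ to a function $\widetilde\varphi\in \dot C^{0,\alpha_{s,p}}(\mathbb{R}^N)$; since each $\varphi_n\in C_0(\overline\Omega)$ and uniform convergence preserves this class, we have $\widetilde\varphi\in C_0(\overline\Omega)\cap C^{0,\alpha_{s,p}}(\overline\Omega)$. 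The $L^p$-piece of the $W^{s,p}$-convergence forces $\widetilde\varphi=\varphi$ almost everywhere, and both quantitative bounds pass to the uniform limit.

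Finally, for the characterization of $\lambda_{p,\infty}^s(\Omega)$, the inclusion $C^\infty_0(\Omega)\subseteq \widetilde{W}^{s,p}_0(\Omega)$ gives one inequality at once. For the reverse, given $\varphi\in \widetilde{W}^{s,p}_0(\Omega)$ with $\|\varphi\|_{L^\infty(\Omega)}=1$, an approximating sequence $\{\varphi_n\}\subseteq C^\infty_0(\Omega)$ converges to $\varphi$ uniformly by the previous paragraph, so $\|\varphi_n\|_{L^\infty(\Omega)}\to 1$. Then $\varphi_n/\|\varphi_n\|_{L^\infty(\Omega)}$ is an admissible competitor for $\lambda_{p,\infty}^s(\Omega)$ whose Sobolev-Slobodecki\u{\i} seminorm tends to $[\varphi]_{W^{s,p}(\mathbb{R}^N)}^p$, giving the opposite inequality after infimizing over $\varphi$. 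I do not foresee any major obstacle: the only delicate point is ensuring that the $L^\infty$ estimate of the first paragraph is uniform in $x\in\mathbb{R}^N$, so that Cauchyness in $L^\infty(\mathbb{R}^N)$ yields a truly global uniform limit and thus preserves membership in $C_0(\overline\Omega)$; this is automatic, since the mean-value split holds at every point with a constant depending only on $N$, $s$, $p$.
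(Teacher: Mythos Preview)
Your proposal is correct and follows essentially the same strategy as the paper: first establish the Morrey and $L^\infty$ bounds on $C^\infty_0(\Omega)$, then pass to $\widetilde{W}^{s,p}_0(\Omega)$ by showing any approximating sequence is Cauchy in $C^{0,\alpha_{s,p}}(\overline\Omega)$ and in $L^\infty$, and finally handle $\lambda^s_{p,\infty}(\Omega)$ by the same density/uniform-convergence argument. The only cosmetic difference is that the paper obtains the $L^\infty$--$L^p$--seminorm interpolation by quoting \cite[Lemma 2.2]{BPZ}, whereas you re-derive it via the mean-value split plus optimization in $r$; these are the same argument.
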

\begin{proof}
We prove at first that $\widetilde W^{s,p}_0(\Omega)\subseteq C_0(\overline\Omega)$. Let $\varphi\in \widetilde{W}^{s,p}_0(\Omega)$, by definition there exists a sequence $\{\varphi_n\}_{n\in\mathbb{N}}\subseteq C^\infty_0(\Omega)$ such that 
\[
\lim_{n\to\infty} \|\varphi_n-\varphi\|_{W^{s,p}(\mathbb{R}^N)}=0.
\] 
In particular, $\{\varphi_n\}_{n\in\mathbb{N}}$ is a Cauchy sequence in $W^{s,p}(\mathbb{R}^N)$. By \cite[Lemma 2.2]{BPZ} and Morrey's inequality, we have for every $n,m\in\mathbb{N}$
\[
\|\varphi_n-\varphi_m\|_{L^\infty(\Omega)}\le C\,\left(\|\varphi_n-\varphi_m\|_{L^p(\Omega)}\right)^\frac{N}{s\,p}\,\left([\varphi_n-\varphi_m]_{C^{0,\alpha_{s,p}}(\mathbb{R}^N)}\right)^{1-\frac{N}{s\,p}},
\]
and
\[
[\varphi_n-\varphi_m]_{C^{0,\alpha_{s,p}}(\mathbb{R}^N)}\le \Big(\mathfrak{m}_{s,p}(\mathbb{R}^N)\Big)^{-\frac{1}{p}}\,[\varphi_n-\varphi_m]_{W^{s,p}(\mathbb{R}^N)}.
\]
Here the constant $C$ depends on $N,p$ and $s$, only\footnote{We remark that it stays bounded as $s\nearrow 1$, as well as $s\searrow N/p$ (see \cite[Remark 2.3]{BPZ}).}.   
Thus, $\{\varphi_n\}_{n\in\mathbb{N}}$ is a Cauchy sequence with respect to the sup norm, as well. Thus it converges uniformly to a limit function in $C_0(\Omega)$, which must coincide with $\varphi$. More precisely, it is a Cauchy sequence in the Banach space $C^{0,\alpha_{s,p}}(\overline\Omega)$, as well. Thus, we actually have $\varphi\in C^{0,\alpha_{s,p}}(\overline\Omega)\cap C_0(\overline\Omega)$.
\par
In particular, we have that $\widetilde{W}^{s,p}_0(\Omega)\subseteq \mathcal{W}^{s,p}(\mathbb{R}^N)$, thus we get the validity of Morrey's inequality for $\widetilde{W}^{s,p}_0(\Omega)$.
The validity of the claimed $L^\infty$ estimate follows in a standard way, by taking the limit as $n$ goes to $\infty$ in
\[
\|\varphi_n\|_{L^\infty(\Omega)}\le C\,\left(\|\varphi_n\|_{L^p(\Omega)}\right)^\frac{N}{s\,p}\,\left([\varphi_n]_{W^{s,p}(\mathbb{R}^N)}\right)^{1-\frac{N}{s\,p}}.
\] 
This estimate can be obtained as above, by combining \cite[Lemma 2.2]{BPZ} and Morrey's inequality for $C^\infty_0(\Omega)$ functions.
\par
As for the last statement, we observe at first that
\[
\lambda_{p,\infty}^s(\Omega)\ge \inf_{\varphi\in \widetilde{W}^{s,p}_0(\Omega)} \Big\{[\varphi]^p_{W^{s,p}(\mathbb{R}^N)}\, :\, \|\varphi\|_{L^\infty(\Omega)}=1\Big\}.
\]
In order to prove the reverse inequality, we take $\varphi\in \widetilde{W}^{s,p}_0(\Omega)$ with unit $L^\infty$ norm and take a sequence $\{\varphi_n\}_{n\in\mathbb{N}}\subseteq C^\infty_0(\Omega)$, converging to $\varphi$ both in $W^{s,p}(\mathbb{R}^N)$ and uniformly. Such a sequence exists, thanks to the first part of the proof. Thus we get
\[
\lambda_{p,\infty}^s(\Omega)\le \lim_{n\to\infty}\frac{[\varphi_n]^p_{W^{s,p}(\mathbb{R}^N)}}{\|\varphi_n\|^p_{L^\infty(\Omega)}}=[\varphi]^p_{W^{s,p}(\mathbb{R}^N)}.
\]
By arbitrariness of $\varphi$, we conclude.
\end{proof}
The use of the space $\mathcal{W}^{s,p}(\mathbb{R}^N)$ may look a bit akward. We now show that this actually coincides with a more standard definition of homogeneous space. The local case $s=1$ is included for completeness, so to show that our results are perfectly consistent with those of \cite{HS}. This is the content of the following
\begin{proposition}
\label{prop:uguaspazi}
Let $N\ge 1$, $0<s\le 1$ and $1<p<\infty$ be such that $s\,p>N$. Let us define
\[
\mathcal{D}^{s,p}(\mathbb{R}^N)=\Big\{\varphi\in L^1_{\rm loc}(\mathbb{R}^N)\, :\, [\varphi]_{W^{s,p}(\mathbb{R}^N)}<+\infty\Big\}.
\]
Then we have 
\[
\mathcal{D}^{s,p}(\mathbb{R}^N)=\mathcal{W}^{s,p}(\mathbb{R}^N),
\]
in the sense that for every $\varphi\in \mathcal{D}^{s,p}(\mathbb{R}^N)$, there exists $\widetilde{\varphi}\in \mathcal{W}^{s,p}(\mathbb{R}^N)$ such that $u=\widetilde{u}$ almost everywhere.
\par
 In particular, the fractional Morrey inequality holds in the space $\mathcal{D}^{s,p}(\mathbb{R}^N)$, as well, still with sharp constant given by $\mathfrak{m}_{s,p}(\mathbb{R}^N)$.
\end{proposition}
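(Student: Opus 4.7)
The inclusion $\mathcal{W}^{s,p}(\mathbb{R}^N)\subseteq\mathcal{D}^{s,p}(\mathbb{R}^N)$ is immediate: H\"older continuous functions are locally bounded, hence locally integrable, and the finiteness of the Gagliardo seminorm is built into the definition of $\mathcal{W}^{s,p}(\mathbb{R}^N)$. My plan is therefore to produce, for each $\varphi\in\mathcal{D}^{s,p}(\mathbb{R}^N)$, a H\"older continuous representative $\widetilde\varphi\in\mathcal{W}^{s,p}(\mathbb{R}^N)$. The local case $s=1$ is the classical Morrey theorem for $W^{1,p}$, so it is enough to concentrate on $0<s<1$.

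The central tool will be the localized Morrey inequality of Lemma \ref{lm:pratignano}, which would apply directly if $\varphi$ were continuous. To reconcile this with the mere $L^1_{\rm loc}$ regularity of our $\varphi$, I would first regularize. Let $\rho_n$ be the usual Friedrichs mollifier supported in $B_{1/n}(0)$, and set $\varphi_n=\varphi\ast\rho_n\in C^\infty(\mathbb{R}^N)$. A short Jensen/Minkowski-type computation on the double integral defining the Gagliardo seminorm yields the nonexpansive bound
\[
[\varphi_n]_{W^{s,p}(\mathbb{R}^N)}\le [\varphi]_{W^{s,p}(\mathbb{R}^N)},\qquad\mbox{for every }n\in\mathbb{N}.
\]
Since $\varphi\in L^1_{\rm loc}(\mathbb{R}^N)$, we also have $\varphi_n\to\varphi$ in $L^1_{\rm loc}(\mathbb{R}^N)$, and up to extracting a subsequence (not relabelled) the convergence is pointwise on a set $A\subseteq\mathbb{R}^N$ of full Lebesgue measure, hence dense.

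Next, for any distinct pair $x,y\in A$ with $\delta=|x-y|$, the ball $B_{\delta/2}((x+y)/2)$ fits inside a slightly larger ball $B_r(x_0)$ (take $x_0=(x+y)/2$ and $r=\delta$), with both $x$ and $y$ lying in its closure. Applying Lemma \ref{lm:pratignano} to each smooth $\varphi_n$ yields
\[
\frac{|\varphi_n(x)-\varphi_n(y)|}{|x-y|^{\alpha_{s,p}}}\le C_{N,s,p}\,[\varphi_n]_{W^{s,p}(B_{\delta/2}(\frac{x+y}{2}))}\le C_{N,s,p}\,[\varphi]_{W^{s,p}(\mathbb{R}^N)},
\]
uniformly in $n$. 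Passing to the limit along the subsequence gives the same bound for $\varphi$ on $A\times A$; since $A$ is dense, $\varphi|_A$ is uniformly continuous and admits a unique continuous extension $\widetilde\varphi\in\dot C^{0,\alpha_{s,p}}(\mathbb{R}^N)$, with $\widetilde\varphi=\varphi$ almost everywhere. The two Gagliardo seminorms then coincide, so $\widetilde\varphi\in\mathcal{W}^{s,p}(\mathbb{R}^N)$, as desired.

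The in-particular statement will follow at once from Theorem \ref{thm:existence}, which identifies $\mathfrak{m}_{s,p}(\mathbb{R}^N)$ as the sharp Morrey constant on $\mathcal{W}^{s,p}(\mathbb{R}^N)$: once the two spaces are known to coincide, the same sharp constant automatically governs Morrey's inequality on $\mathcal{D}^{s,p}(\mathbb{R}^N)$. The only delicate point I foresee is the reconciliation of the continuity hypothesis in Lemma \ref{lm:pratignano} with the $L^1_{\rm loc}$ starting regularity of $\varphi$, and this is precisely what the mollification step is designed to bypass, leveraging the nonexpansiveness of the Gagliardo seminorm under convolution.
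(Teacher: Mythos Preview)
Your proof is correct and follows essentially the same route as the paper: mollify, apply the localized Morrey estimate of Lemma~\ref{lm:pratignano} to the smooth approximants, and pass to the limit along a pointwise a.e.\ convergent subsequence. The only cosmetic difference is that the paper invokes strong convergence $[\varphi_n-\varphi]_{W^{s,p}(\mathbb{R}^N)}\to 0$ of the mollifications, whereas you use the simpler nonexpansiveness bound $[\varphi_n]_{W^{s,p}(\mathbb{R}^N)}\le[\varphi]_{W^{s,p}(\mathbb{R}^N)}$; either suffices here.
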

\begin{proof}
We first consider the case $0<s<1$.
Let $\varphi\in \mathcal{D}^{s,p}(\mathbb{R}^N)$ and consider $\varphi_{n,M}=\varphi_M\ast \rho_n$, where $\rho_n$ is the sequence of smoothing kernels as in the proof of Lemma \ref{lm:pratignano}. By construction, we clearly have
\[
\lim_{n\to\infty} \|\varphi_n-\varphi\|_{L^1(B_R(0))},\qquad \mbox{ for every }R>0,
\]
and thus 
\[
\lim_{n\to\infty} \varphi_n(x)=\varphi(x),\qquad \mbox{ for a.\,e. }x\in\mathbb{R}^N,
\]
possibly up to pass to a subsequence.
We also know that (see \cite[Lemma A.1]{BGCV})
\[
\lim_{n\to\infty} [\varphi_n-\varphi]_{W^{s,p}(\mathbb{R}^N)}=0,
\]
and $\{\varphi_n\}_{n\in\mathbb{N}}\subseteq C^\infty(\mathbb{R}^N)$.
By applying Lemma \ref{lm:pratignano} to each $\varphi_n$ and thanks to the arbitrariness of $R>0$, we get
\[
\frac{|\varphi_n(x)-\varphi_n(y)|}{|x-y|^{\alpha_{s,p}}}\le  4^\frac{4\,N+1}{p}\,  \left(\frac{2^{N+2\,p}}{\omega_N\,\Lambda_{s,p}(B_1(0))}\right)^\frac{1}{p}\,[\varphi_n]_{W^{s,p}(\mathbb{R}^N)},\quad \mbox{ for } x\not=y.
\]
By taking the limit as $n$ goes to $\infty$ and using the previous informations, we get that 
\[
\frac{|\varphi(x)-\varphi(y)|}{|x-y|^{\alpha_{s,p}}}\le C,\qquad \mbox{ for a.\,e.  }x,y\in\mathbb{R}^N,
\]
i.e. $\varphi$ coincides almost everywhere with a function in $\dot C^{0,\alpha_{s,p}}(\mathbb{R}^N)$. This gives the desired conclusion in the fractional case.
\par
In the local case $s=1$, we proceed in exactly the same way, by using the following Morrey--type estimate (see \cite[equation (6.1)]{HS})
\[
\frac{|\varphi_n(x)-\varphi_n(y)|}{|x-y|^{\alpha_{1,p}}}\le C_{N,p}\,\int_{B_\frac{\delta}{2}(\frac{x+y}{2})}|\nabla \varphi_n|^p\,dx,\qquad \mbox{ with } \delta=|x-y|>0,
\]
in place of Lemma \ref{lm:pratignano}.
\end{proof}

\begin{remark}
\label{rem:HS}
In the local case $s=1$, we can easily repeat the density argument of the first part of the proof of Theorem \ref{thm:existence} and obtain
\[
\mathfrak{m}_{1,p}(\mathbb{R}^N)=\inf_{\varphi\in \mathcal{W}^{1,p}(\mathbb{R}^N)} \Big\{[\varphi]_{W^{1,p}(\mathbb{R}^N)}^p\, : \, [\varphi]_{C^{0,\alpha_{1,p}}(\mathbb{R}^N)}=1\Big\},
\]
here as well. By using the previous result and \cite[Lemma 2.1]{HS}, we then obtain that this infimum is attained on $\mathcal{W}^{1,p}(\mathbb{R}^N)=D^{1,p}(\mathbb{R}^N)$, the latter being exactly the space used in \cite{HS}.
\end{remark}

\section{Some properties of extremals}
\label{sec:7}

\subsection{Basic properties}

\begin{lemma}
\label{lemma:propextrem}
For every $x_0\not=y_0\in\mathbb{R}^N$ and $a\not=b\in\mathbb{R}$, there exists an extremal $u\in \mathcal{W}^{s,p}(\mathbb{R}^N)$ such that
\[
u(x_0)=a,\qquad u(y_0)=b\qquad \mbox{ and }\qquad [u]_{C^{0,\alpha_{s,p}}(\mathbb{R}^N)}=\frac{|u(x_0)-u(y_0)|}{|x_0-y_0|^{\alpha_{s,p}}}.
\] 
\end{lemma}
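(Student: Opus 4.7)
The plan is to produce the desired extremal by transforming a generic one via the symmetries of the minimization problem \eqref{morreysharp}. By Theorem \ref{thm:existence}, there exists at least one extremal $U \in \mathcal{W}^{s,p}(\mathbb{R}^N)$, which we normalize so that $[U]_{C^{0,\alpha_{s,p}}(\mathbb{R}^N)} = 1$. By Theorem \ref{thm:maximize Holder ratio}, we may pick two distinct points $\overline{x}, \overline{y} \in \mathbb{R}^N$ at which the $C^{0,\alpha_{s,p}}$ seminorm of $U$ is attained, i.e.
\[
1 = [U]_{C^{0,\alpha_{s,p}}(\mathbb{R}^N)} = \frac{|U(\overline{x}) - U(\overline{y})|}{|\overline{x} - \overline{y}|^{\alpha_{s,p}}}.
\]
Up to swapping $\overline{x}$ and $\overline{y}$, we may assume $U(\overline{x}) > U(\overline{y})$.

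The key observation is that both seminorms $[\,\cdot\,]_{W^{s,p}(\mathbb{R}^N)}$ and $[\,\cdot\,]_{C^{0,\alpha_{s,p}}(\mathbb{R}^N)}$ are invariant under translations and orthogonal transformations, homogeneous under dilations with the \emph{same} scaling exponent (this is the defining property of $\alpha_{s,p}$, see \eqref{exponent}), homogeneous of the same order under multiplication by scalars, and completely invariant under addition of constants. Consequently, the class of extremals is preserved by the transformations
\[
U \;\longmapsto\; c\, U\bigl(\lambda\, O(x - \xi)\bigr) + d, \qquad c \in \mathbb{R}\setminus\{0\},\ \lambda > 0,\ O \in O(N),\ \xi \in \mathbb{R}^N,\ d \in \mathbb{R},
\]
and the pair of points realizing the Hölder seminorm is transported in the obvious way.

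It remains to choose the parameters so as to send $(\overline{x}, \overline{y}) \mapsto (x_0, y_0)$ and the values $(U(\overline{x}), U(\overline{y})) \mapsto (a, b)$. Set $\lambda = |\overline{x} - \overline{y}| / |x_0 - y_0|$, pick any $O \in O(N)$ mapping the unit vector $(x_0 - y_0)/|x_0 - y_0|$ to $(\overline{x} - \overline{y})/|\overline{x} - \overline{y}|$, and choose $\xi$ so that $\lambda\, O(x_0 - \xi) = \overline{x}$; this geometric composition sends $x_0 \mapsto \overline{x}$ and $y_0 \mapsto \overline{y}$. Finally, with $c = (a - b)/(U(\overline{x}) - U(\overline{y}))$ and $d = a - c\, U(\overline{x})$, the affine adjustment in the target ensures $u(x_0) = a$ and $u(y_0) = b$; note that $c \neq 0$ since $a \neq b$. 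The resulting function $u$ lies in $\mathcal{W}^{s,p}(\mathbb{R}^N)$, is an extremal, and attains its $C^{0,\alpha_{s,p}}$ seminorm precisely at the pair $(x_0, y_0)$. The verification is a one-line computation once the invariances are recorded, so no real obstacle arises; the only point deserving care is checking that the composed transformation genuinely maps the selected pair of points onto the prescribed one, which is standard linear algebra.
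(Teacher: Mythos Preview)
Your proof is correct and follows essentially the same approach as the paper: start from a generic extremal $U$ with its maximizing pair $(\overline{x},\overline{y})$ supplied by Theorems \ref{thm:existence} and \ref{thm:maximize Holder ratio}, then transport it to the prescribed data via a composition of dilation, orthogonal transformation, translation, scalar multiplication and addition of a constant. The only cosmetic difference is that you spell out explicitly why the class of extremals is invariant under these operations, whereas the paper simply writes down the transformed function and states that it has the required properties.
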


\begin{proof}
From Theorem \ref{thm:existence}, we already have existence of an extremal $U\in\mathcal{W}^{s,p}(\mathbb{R}^N)$. Moreover, by Theorem \ref{thm:maximize Holder ratio}, there exists two points $\overline{x},\overline{y}$ such that
\[
[U]_{C^{0,\alpha_{s,p}}(\mathbb{R}^N)}=\frac{|U(\overline x)-U(\overline y)|}{|\overline x-\overline y|^{\alpha_{s,p}}}.
\]
Let $O$ be an orthogonal transformation of $\mathbb{R}^N$ such that 
\[
 O\left(\frac{y_0-x_0}{|y_0-x_0|}\right)=\frac{\overline{y}-\overline{x}}{|\overline{y}-\overline{x}|},
\] 
and define
\[
u(x)=A\,U\left(|\overline y-\overline x|\,O\left(\frac{x-x_0}{|y_0-x_0|}\right)+\overline x\right)+B,\qquad \mbox{ for every }x\in\mathbb{R}^N,
\]
where
\[
A=\frac{b-a}{{U(\overline y)-U(\overline x)}}\qquad \mbox{ and }\qquad B=\frac{a\,U(\overline y)-b\,U(\overline x)}{U(\overline y)-U(\overline x)}.
\]
Then, it is easy to observe that this function has the desired properties.
\end{proof}
Every extremal satisfies the relevant Euler-Lagrange equation. More precisely, if we recall the notation
\[
J_p(t)=|t|^{p-2}\,t,\qquad \mbox{ for every } t\in\mathbb{R},
\]
we have
\begin{proposition}
\label{prop:PDE}
Let $u\in \mathcal{W}^{s,p}(\mathbb{R}^N)$ be an extremal for $\mathfrak{m}_{s,p}(\mathbb{R}^N)$. If $x_0,y_0$ are two distinct points such that
\begin{equation}
\label{seminorm_equal}
[u]_{C^{0,\alpha_{s,p}}(\mathbb{R}^N)}=\frac{|u(x_0)-u(y_0)|}{|x_0-y_0|^{\alpha_{s,p}}},
\end{equation}
then $u$ is a weak solution of
\[
(-\Delta_p)^s u=\mathfrak{m}_{s,p}(\mathbb{R}^N)\,\left[\frac{J_p(u(x_0)-u(y_0))}{|x_0-y_0|^{s\,p-N}}\right]\,(\delta_{x_0}-\delta_{y_0}),\qquad \text{ in }\mathbb{R}^N.
\]
In other words, for every $\varphi\in \mathcal{W}^{s,p}(\mathbb{R}^N)$ we have 
\[
\iint_{\mathbb{R}^N\times\mathbb{R}^N} \frac{J_p(u(x)-u(y))\,(\varphi(x)-\varphi(y))}{|x-y|^{N+s\,p}}\,dx\,dy= \mathfrak{m}_{s,p}(\mathbb{R}^N)\,\frac{J_p(u(x_0)-u(y_0))\,\left(\varphi(x_0)-\varphi(y_0)\right)}{|x_0-y_0|^{s\,p-N}}.
\]

\end{proposition}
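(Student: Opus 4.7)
The plan is to derive the Euler--Lagrange equation by a one-parameter variation, leveraging the fact that the H\"older supremum is actually attained at the specific pair $(x_0,y_0)$, so that this supremum can be replaced (from below) by a smooth function of the parameter. Fix $\varphi\in\mathcal{W}^{s,p}(\mathbb{R}^N)$ and set $u_t:=u+t\varphi$ for $t\in\mathbb{R}$. Since $u_t\in\mathcal{W}^{s,p}(\mathbb{R}^N)$, Theorem \ref{thm:existence} gives
\[
[u_t]_{W^{s,p}(\mathbb{R}^N)}^p\;\ge\;\mathfrak{m}_{s,p}(\mathbb{R}^N)\,[u_t]_{C^{0,\alpha_{s,p}}(\mathbb{R}^N)}^p.
\]
Using the trivial lower bound on the H\"older seminorm at the pair $(x_0,y_0)$ and recalling that $p\,\alpha_{s,p}=s\,p-N$, one gets
\[
[u_t]_{C^{0,\alpha_{s,p}}(\mathbb{R}^N)}^p\;\ge\;\frac{|u_t(x_0)-u_t(y_0)|^p}{|x_0-y_0|^{s\,p-N}}.
\]
Define
\[
g(t):=[u+t\,\varphi]_{W^{s,p}(\mathbb{R}^N)}^p-\mathfrak{m}_{s,p}(\mathbb{R}^N)\,\frac{|u(x_0)+t\,\varphi(x_0)-u(y_0)-t\,\varphi(y_0)|^p}{|x_0-y_0|^{s\,p-N}}.
\]
By the two inequalities above, $g\ge 0$ on $\mathbb{R}$. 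On the other hand, $u$ is extremal, so $[u]_{W^{s,p}(\mathbb{R}^N)}^p=\mathfrak{m}_{s,p}(\mathbb{R}^N)\,[u]_{C^{0,\alpha_{s,p}}(\mathbb{R}^N)}^p$, and together with the hypothesis \eqref{seminorm_equal} this yields $g(0)=0$. Thus $t=0$ is a global minimum of $g$.

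Next I would show that $g$ is differentiable at $0$ and compute $g'(0)$. For the second term, let $A:=u(x_0)-u(y_0)$ and $B:=\varphi(x_0)-\varphi(y_0)$; since $u$ is non-constant, $[u]_{C^{0,\alpha_{s,p}}(\mathbb{R}^N)}>0$, and then \eqref{seminorm_equal} forces $A\neq 0$. Hence $t\mapsto |A+t\,B|^p$ is smooth in a neighbourhood of $0$ with derivative $p\,J_p(A+t\,B)\,B$, giving at $t=0$ the value $p\,J_p(A)\,B$. For the first term, I would differentiate under the integral sign: the pointwise $t$-derivative of
\[
(x,y)\mapsto\frac{|u(x)-u(y)+t\,(\varphi(x)-\varphi(y))|^p}{|x-y|^{N+s\,p}}
\]
is $p\,J_p(u_t(x)-u_t(y))\,(\varphi(x)-\varphi(y))\,|x-y|^{-N-s\,p}$; using the elementary estimate $|J_p(a+b)|\le C_p\,(|a|^{p-1}+|b|^{p-1})$ for $|t|\le 1$ and H\"older's inequality with exponents $p/(p-1)$ and $p$, one dominates the difference quotient by a function whose integral is controlled by $[u]_{W^{s,p}(\mathbb{R}^N)}^{p-1}\,[\varphi]_{W^{s,p}(\mathbb{R}^N)}+[\varphi]_{W^{s,p}(\mathbb{R}^N)}^{p}<+\infty$, so dominated convergence applies and produces
\[
\frac{d}{dt}\Big|_{t=0}[u+t\,\varphi]_{W^{s,p}(\mathbb{R}^N)}^p=p\iint_{\mathbb{R}^N\times\mathbb{R}^N}\frac{J_p(u(x)-u(y))\,(\varphi(x)-\varphi(y))}{|x-y|^{N+s\,p}}\,dx\,dy.
\]

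Since $t=0$ is an interior minimum of $g$ and $g$ is differentiable there, $g'(0)=0$; dividing by $p$ yields exactly the claimed weak identity, with the right-hand side equal to $\mathfrak{m}_{s,p}(\mathbb{R}^N)\,J_p(u(x_0)-u(y_0))\,(\varphi(x_0)-\varphi(y_0))/|x_0-y_0|^{s\,p-N}$. The main obstacle is the differentiation under the integral sign for the Gagliardo energy: one has to produce an integrable majorant for the difference quotient that is uniform in $t$ near $0$, which is done via the inequality $|J_p(a+b)|\le C_p(|a|^{p-1}+|b|^{p-1})$ together with H\"older; aside from this, the use of the specific pair $(x_0,y_0)$ is the decisive idea, because it replaces the non-smooth supremum in the H\"older seminorm by a smooth function of $t$, thereby sidestepping any need to differentiate the seminorm itself.
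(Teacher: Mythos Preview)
Your proof is correct and follows essentially the same approach as the paper: both exploit that the H\"older supremum is attained at $(x_0,y_0)$ to replace the non-smooth seminorm by a smooth function of $t$, then compare with the Gagliardo energy via Morrey's inequality and differentiate at $t=0$. The only cosmetic difference is that the paper uses the convexity inequality $|A+tB|^p\ge |A|^p+p\,t\,J_p(A)\,B$ on the right-hand side and works with one-sided limits (then replaces $\varphi$ by $-\varphi$), whereas you differentiate both terms directly and use $g'(0)=0$; your more explicit justification of the differentiation under the integral sign for the Gagliardo term is a point the paper leaves implicit.
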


\begin{proof}
By Theorem \ref{thm:maximize Holder ratio} there exists a pair of distinct points $x_0,y_0$ such that \eqref{seminorm_equal} holds.
Thus, we get
\begin{equation}\label{extremal pde solution eq1}
\mathfrak{m}_{s,p}(\mathbb{R}^N)\, [u]^p_{C^{0,\alpha_{s,p}}(\mathbb{R}^N)}=\mathfrak{m}_{s,p}(\mathbb{R}^N)\,\frac{|u(x_0)-u(y_0)|^p}{|x_0-y_0|^{s\,p-N}}=[u]^p_{W^{s,p}(\mathbb{R}^N)}.
\end{equation}
For $t>0$, we consider the function
 $$
 v(x)=u(x)+t\,\varphi(x),\qquad \text{ where }\varphi\in \mathcal{W}^{s,p}(\mathbb{R}^N),
 $$
 which still belongs to $\mathcal{W}^{s,p}(\mathbb{R}^N)$. Therefore, by applying the fractional Morrey inequality for $v$ and using the convexity of the function $t\mapsto |t|^p$, we obtain
\[
\begin{split}
[u+t\,\varphi]_{W^{s,p}(\mathbb{R}^N)}^p&\geq\mathfrak{m}_{s,p}(\mathbb{R}^N)\,
\frac{|\left(u(x_0)-u(y_0)\right)+t\,\left(\varphi(x_0)-\varphi(y_0)\right)|^p}{|x_0-y_0|^{s\,p-N}}
\\
&\geq   \mathfrak{m}_{s,p}(\mathbb{R}^N)\,\frac{|u(x_0)-u(y_0)|^p}{|x_0-y_0|^{s\,p-N}}+\mathfrak{m}_{s,p}(\mathbb{R}^N)\,p\,t\,\frac{J_p(u(x_0)-u(y_0))\left(\varphi(x_0)-\varphi(y_0)\right)}{|x_0-y_0|^{s\,p-N}}.
\end{split}
\]
By combining this estimate with \eqref{extremal pde solution eq1}, we get for $t>0$
\[
\begin{split}
\frac{[u+t\,\varphi]^p_{W^{s,p}(\mathbb{R}^N)}-[u]^p_{W^{s,p}(\mathbb{R}^N)}}{t}
&\geq \mathfrak{m}_{s,p}(\mathbb{R}^N)\,p\,\frac{J_p(u(x_0)-u(y_0))\,\left(\varphi(x_0)-\varphi(y_0)\right)}{|x_0-y_0|^{s\,p-N}}.
\end{split}
\]
By taking the limit as $t$ goes to $0$ and recalling that $(-\Delta_p)^s$ in weak form coincides with first variation of the functional
\[
\varphi\mapsto \frac{1}{p}\,[\varphi]_{W^{s,p}(\mathbb{R}^N)}^p,
\]
we get that $u$ satisfies
\[
\iint_{\mathbb{R}^N\times\mathbb{R}^N} \frac{J_p(u(x)-u(y))\,(\varphi(x)-\varphi(y))}{|x-y|^{N+s\,p}}\,dx\,dy\ge \mathfrak{m}_{s,p}(\mathbb{R}^N)\,\frac{J_p(u(x_0)-u(y_0))\,\left(\varphi(x_0)-\varphi(y_0)\right)}{|x_0-y_0|^{s\,p-N}},
\]
for every $\varphi\in \mathcal{W}^{s,p}(\mathbb{R}^N)$. By using this inequality with $-\varphi$ in place of $\varphi$, we get the desired conclusion.
\end{proof}
\begin{remark}
From the previous result, we obtain in particular that if $u\in \mathcal{W}^{s,p}(\mathbb{R}^N)$ is an extremal, then the two distinct points $x_0,y_0$ obtained in Theorem \ref{thm:maximize Holder ratio} are actually unique. This easily follows from the Euler-Lagrange equation \eqref{prop:PDE}.
\end{remark}

\begin{theorem}
\label{thm:extremal_equiv}
Let $x_0,\,y_0\in\mathbb{R}^N$ be two distinct points. Then the following facts are equivalent: 
\begin{itemize}
\item[(i)] $u\in\mathcal{W}^{s,p}(\mathbb{R}^N)$ is an extremal for $\mathfrak{m}_{s,p}(\mathbb{R}^N)$ and
\[
[u]_{C^{0,\alpha_{s,p}}(\mathbb{R}^N)}=\frac{|u(x_0)-u(y_0)|}{|x_0-y_0|^{\alpha_{s,p}}};
\]
\vskip.2cm
\item[(ii)] $u\in \mathcal{W}^{s,p}(\mathbb{R}^N)$ weakly solves the following equation
\begin{equation*}
(-\Delta_p)^s u=c\,(\delta_{x_0}-\delta_{y_0})\text{ in }\mathbb{R}^N,
\end{equation*}
for some constant $c\not=0$;
\vskip.2cm
\item[(iii)] $u\in\mathcal{W}^{s,p}(\mathbb{R}^N)$ is non-constant and is the unique minimizer of the problem
\[
\inf_{\varphi\in \mathcal{W}^{s,p}(\mathbb{R}^N)}\Big\{[\varphi]^p_{W^{s,p}(\mathbb{R}^N)}\, :\, \varphi(x_0)=u(x_0)\, \mbox{ and } \varphi(y_0)=u(y_0)\Big\}.
\]
\end{itemize}
\end{theorem}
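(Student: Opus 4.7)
The plan is to prove the cycle (i) $\Rightarrow$ (ii) $\Rightarrow$ (iii) $\Rightarrow$ (i), each step essentially drawing on results already established. For (i) $\Rightarrow$ (ii), Proposition \ref{prop:PDE} will directly give that $u$ weakly solves
\[
(-\Delta_p)^s u = c\,(\delta_{x_0}-\delta_{y_0}),\qquad c=\mathfrak{m}_{s,p}(\mathbb{R}^N)\,\frac{J_p(u(x_0)-u(y_0))}{|x_0-y_0|^{s\,p-N}}.
\]
Since $u$ is an extremal, it is non-constant, so $[u]_{C^{0,\alpha_{s,p}}(\mathbb{R}^N)}>0$; together with (i), this forces $u(x_0)\neq u(y_0)$, hence $c\neq 0$.

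For (ii) $\Rightarrow$ (iii), the key observation is that the functional $F(\varphi)=\frac{1}{p}\,[\varphi]^p_{W^{s,p}(\mathbb{R}^N)}$ is convex, with G\^ateaux derivative $(-\Delta_p)^s \varphi$ in weak form. Given an admissible competitor $\varphi$ with $\varphi(x_0)=u(x_0)$ and $\varphi(y_0)=u(y_0)$, set $\psi=\varphi-u\in\mathcal{W}^{s,p}(\mathbb{R}^N)$, which vanishes at $x_0$ and $y_0$. Testing (ii) against $\psi$ yields $\langle (-\Delta_p)^s u,\psi\rangle=c\,(\psi(x_0)-\psi(y_0))=0$, and convexity then gives
\[
F(\varphi)\ge F(u)+\langle (-\Delta_p)^s u,\varphi-u\rangle = F(u).
\]
For uniqueness, suppose $\widetilde u$ is another minimizer: the midpoint $(u+\widetilde u)/2$ is admissible and hence minimizes $F$ as well, so by the strict convexity of $t\mapsto|t|^p$ on $\mathbb{R}$ (valid for $p>1$) one must have $u(x)-u(y)=\widetilde u(x)-\widetilde u(y)$ for almost every $(x,y)$, i.e.\ $u-\widetilde u$ is constant almost everywhere. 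Since $u(x_0)=\widetilde u(x_0)$, this constant vanishes and $u=\widetilde u$.

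For (iii) $\Rightarrow$ (i), the first step is to note that $u(x_0)\neq u(y_0)$: otherwise the constant function $\varphi\equiv u(x_0)$ would be admissible with zero Sobolev seminorm, forcing $u$ to be constant by uniqueness, a contradiction. By Lemma \ref{lemma:propextrem}, there exists an extremal $v\in\mathcal{W}^{s,p}(\mathbb{R}^N)$ with $v(x_0)=u(x_0)$, $v(y_0)=u(y_0)$, and whose H\"older seminorm is realized precisely at $(x_0,y_0)$. Since $v$ is admissible for the problem in (iii), we have $[u]^p_{W^{s,p}(\mathbb{R}^N)}\le [v]^p_{W^{s,p}(\mathbb{R}^N)}$; combining this with the extremality of $v$ and Morrey's inequality applied to $u$ yields the chain
\[
[v]^p_{W^{s,p}(\mathbb{R}^N)}=\mathfrak{m}_{s,p}(\mathbb{R}^N)\,\frac{|u(x_0)-u(y_0)|^p}{|x_0-y_0|^{s\,p-N}}\le \mathfrak{m}_{s,p}(\mathbb{R}^N)\,[u]^p_{C^{0,\alpha_{s,p}}(\mathbb{R}^N)}\le [u]^p_{W^{s,p}(\mathbb{R}^N)}\le [v]^p_{W^{s,p}(\mathbb{R}^N)},
\]
so all inequalities collapse into equalities. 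Hence $u$ is extremal with H\"older seminorm attained at $(x_0,y_0)$, and the uniqueness statement in (iii) in fact identifies $u$ with $v$.

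The only delicate point will be the strict convexity argument needed for uniqueness in (ii) $\Rightarrow$ (iii); the remaining steps reduce to careful bookkeeping with Proposition \ref{prop:PDE}, Lemma \ref{lemma:propextrem}, and the fractional Morrey inequality.
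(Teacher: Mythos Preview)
Your proposal is correct and follows essentially the same cycle and the same key inputs (Proposition~\ref{prop:PDE}, convexity of the seminorm, Lemma~\ref{lemma:propextrem}) as the paper's proof. In fact, you are more careful than the paper in the step (ii) $\Rightarrow$ (iii): the paper only verifies that $u$ is \emph{a} minimizer and declares (iii), whereas you also supply the strict-convexity midpoint argument for uniqueness and the observation that $u(x_0)\neq u(y_0)$, both of which are needed for (iii) as stated.
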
  
\begin{proof}
The implication (i) $\Longrightarrow$ (ii) follows from Proposition \ref{prop:PDE}. 
\vskip.2cm\noindent
Let us suppose that (ii) holds and let $v\in\mathcal{W}^{s,p}(\mathbb{R}^N)$, with $u(x_0)=v(x_0)$ and $u(y_0)=v(y_0)$. Then, by convexity and the equation for $u$ (used with test function $\varphi=v-u$), we obtain
\[ 
\begin{split}
[v]^p_{W^{s,p}(\mathbb{R}^N)}&\ge [u]^p_{W^{s,p}(\mathbb{R}^N)}\\
&+p\,\iint_{\mathbb{R}^N\times\mathbb{R}^N}\frac{J_p(u(x)-u(y))\,\Big((v(x)-u(x))-(v(y)-u(y))\Big)}{|x-y|^{N+s\,p}}\,dx\,dy
\\
&=[u]^p_{W^{s,p}(\mathbb{R}^N)}+c\,p\Big((v(x_0)-u(x_0))-(v(y_0)-u(y_0))\Big)=[u]^p_{W^{s,p}(\mathbb{R}^N)}.
 \end{split}
\]
Therefore, property (iii) holds. 
\vskip.2cm\noindent
Finally, we prove the implication (iii) $\Longrightarrow$ (i). Let $u$ satisfies (iii), by Lemma \ref{lemma:propextrem} we know that there exists an extremal $w\in \mathcal{W}^{s,p}(\mathbb{R}^N)$ such that
\[
w(x_0)=u(x_0),\qquad w(y_0)=u(y_0)\qquad \mbox{ and }\qquad
[w]_{C^{0,\alpha_{s,p}}(\mathbb{R}^N)}=\frac{|w(x_0)-w(y_0)|}{|x_0-y_0|^{s-\frac{N}{p}}}.  
\]
Observe that the last properties imply in particular that  
\begin{equation}
\label{whold}
[w]_{C^{0,\alpha_{s,p}}(\mathbb{R}^N)}=\frac{|u(x_0)-u(y_0)|}{|x_0-y_0|^{s-\frac{N}{p}}}\le [u]_{C^{0,\alpha_{s,p}}(\mathbb{R}^N)}.
\end{equation}
By extremality of $u$, we have
\[
[w]^p_{W^{s,p}(\mathbb{R}^N)}\ge [u]_{W^{s,p}(\mathbb{R}^N)}^p.
\]
On the other hand, by extremality of $w$, we also get
\[
[u]_{W^{s,p}(\mathbb{R}^N)}^p\ge \mathfrak{m}_{s,p}(\mathbb{R}^N)\,[u]^p_{C^{0,\alpha_{s,p}}(\mathbb{R}^N)}=\frac{[w]_{W^{s,p}(\mathbb{R}^N)}^p}{[w]^p_{C^{0,\alpha_{s,p}}(\mathbb{R}^N)}}\,[u]^p_{C^{0,\alpha_{s,p}}(\mathbb{R}^N)}\ge [w]^p_{W^{s,p}(\mathbb{R}^N)}.
\]
In the last inequality, we used \eqref{whold}.
Thus the two seminorms must coincide and the previous chain of inequalities must be a chain of identities. This in particular shows that $u$ is an extremal for $\mathfrak{m}_{s,p}(\mathbb{R}^N)$.
 \end{proof}
\begin{corollary}[Uniqueness]
Suppose $u,v\in\mathcal{W}^{s,p}(\mathbb{R}^N)$ are two extremals such that
\[
[u]_{C^{0,\alpha_{s,p}}(\mathbb{R}^N)}=\frac{|u(x_0)-u(y_0)|}{|x_0-y_0|^{\alpha_{s,p}}}\qquad \mbox{ and }\qquad [v]_{C^{0,\alpha_{s,p}}(\mathbb{R}^N)}=\frac{|v(x_0)-v(y_0)|}{|x_0-y_0|^{\alpha_{s,p}}}.
\]
for some $x_0\not=y_0$.
If there exists a constant $C\not=0$ such that $u(x_0)=C\,v(x_0)$ and $u(y_0)=C\,v(y_0)$, then $u\equiv C\,v$.
\end{corollary}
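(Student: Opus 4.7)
The plan is to reduce the claim to the uniqueness statement already contained in Theorem \ref{thm:extremal_equiv}, more precisely to the implication $(\mathrm{i})\Rightarrow(\mathrm{iii})$. Since $u$ is an extremal attaining its H\"older seminorm at $(x_0,y_0)$, that implication tells us that $u$ is the \emph{unique} minimizer of the constrained problem
\[
\inf_{\varphi\in \mathcal{W}^{s,p}(\mathbb{R}^N)}\Big\{[\varphi]^p_{W^{s,p}(\mathbb{R}^N)}\, :\, \varphi(x_0)=u(x_0),\ \varphi(y_0)=u(y_0)\Big\}.
\]
Therefore, if I can exhibit a second minimizer of this same problem, uniqueness will force it to coincide with $u$. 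My candidate is $Cv$, which is automatically admissible because the hypothesis on the values at $x_0$ and $y_0$ reads precisely $Cv(x_0)=u(x_0)$ and $Cv(y_0)=u(y_0)$.

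The heart of the argument is then the observation that $Cv$ is itself an extremal which attains its H\"older seminorm at $(x_0,y_0)$. This is a pure homogeneity check: both $[\,\cdot\,]_{W^{s,p}(\mathbb{R}^N)}^p$ and $[\,\cdot\,]_{C^{0,\alpha_{s,p}}(\mathbb{R}^N)}^p$ are $p$-homogeneous, so multiplying $v$ by the nonzero constant $C$ multiplies both seminorms by $|C|^p$ and leaves their ratio unchanged. In particular, $Cv$ is non-constant and
\[
[Cv]_{W^{s,p}(\mathbb{R}^N)}^p=\mathfrak{m}_{s,p}(\mathbb{R}^N)\,[Cv]_{C^{0,\alpha_{s,p}}(\mathbb{R}^N)}^p.
\]
The same scaling also yields
\[
[Cv]_{C^{0,\alpha_{s,p}}(\mathbb{R}^N)}=|C|\,\frac{|v(x_0)-v(y_0)|}{|x_0-y_0|^{\alpha_{s,p}}}=\frac{|Cv(x_0)-Cv(y_0)|}{|x_0-y_0|^{\alpha_{s,p}}},
\]
so $Cv$ realizes its H\"older seminorm at the same pair $(x_0,y_0)$.

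Applying Theorem \ref{thm:extremal_equiv}, $(\mathrm{i})\Rightarrow(\mathrm{iii})$, now to $Cv$ shows that $Cv$ is the unique minimizer of the constrained problem with boundary data $Cv(x_0)=u(x_0)$ and $Cv(y_0)=u(y_0)$. Since $u$ is also the unique minimizer of that very problem, the two must coincide, giving $u\equiv Cv$. I do not anticipate any real obstacle here: the whole argument is a scaling observation plugged into a uniqueness statement that has already been proved.
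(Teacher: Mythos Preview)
Your proof is correct and follows essentially the same approach as the paper: both arguments observe that $Cv$ is again an extremal attaining its H\"older seminorm at $(x_0,y_0)$, then invoke Theorem \ref{thm:extremal_equiv} to conclude that $u$ and $Cv$ are each the unique minimizer of the same constrained problem, hence coincide. Your version is slightly more explicit in verifying the homogeneity step, but the substance is identical.
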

\begin{proof}
We first observe that $u$ and $v$ are both non-constant, by definition of extremal.
Thanks to Theorem \ref{thm:extremal_equiv}, both $C\,v$ and $u$ solve the same problem
\[
\inf_{\varphi\in \mathcal{W}^{s,p}(\mathbb{R}^N)}\Big\{[\varphi]^p_{W^{s,p}(\mathbb{R}^N)}\, :\, \varphi(x_0)=C\,v(x_0) \mbox{ and } \varphi(y_0)=C\,v(y_0)\Big\}.
\]
Since this problem has a unique solution, we conclude.
\end{proof}

\subsection{Regularity: boundedness}We now prove a global $L^\infty$ bound, by using a curious Moser-type iteration, based on Hardy's inequality. This is the content of the following
\begin{proposition}
\label{prop:limita}
Let $N\ge 1$, $1< p<\infty$ and $0<s<1$ be such that $s\,p>N$. Let $\mu$ be a compactly supported finite measure. Let
$U\in \mathcal{W}^{s,p}(\mathbb{R}^N)$ be a weak solution of 
\[
(-\Delta_p)^s U=\mu,\qquad \mbox{ in }\mathbb{R}^N,
\]
that is 
\begin{equation}
\label{weak_mu}
\iint_{\mathbb{R}^N\times\mathbb{R}^N} \frac{J_p(U(x)-U(y))\,(\varphi(x)-\varphi(y))}{|x-y|^{N+s\,p}}\,dx\,dy=\int_{\mathbb{R}^N}\varphi\,d\mu,\quad \mbox{ for every } \varphi\in \mathcal{W}^{s,p}(\mathbb{R}^N).
\end{equation}
Then $U \in L^\infty(\mathbb{R}^N)$ and for every $z\in\mathbb{R}^N$ we have
\[
\|U-U(z)\|_{L^\infty(\mathbb{R}^N)}=\max_{K} |U-U(z)|,
\]
where $K\subseteq\mathbb{R}^N$ is the support of $\mu$. 
\end{proposition}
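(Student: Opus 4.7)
The plan is to prove the two-sided pointwise bound $M_- \le U \le M_+$ on $\mathbb{R}^N$, where
$$M_+ := \max_K U \qquad \text{and} \qquad M_- := \min_K U.$$
These are attained since $U \in \mathcal{W}^{s,p}(\mathbb{R}^N) \subset \dot C^{0,\alpha_{s,p}}(\mathbb{R}^N)$ is continuous and $K = \operatorname{supp}\mu$ is compact. Applied to $U - U(z)$ (which still solves the same equation, as constants contribute nothing to the $W^{s,p}$ seminorm or to differences inside $J_p$), this bound immediately yields the claimed identity $\|U - U(z)\|_{L^\infty(\mathbb{R}^N)} = \max_K |U - U(z)|$.

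The key step is to choose the truncation $v := (U - M_+)_+$ as test function in \eqref{weak_mu}. Two verifications are needed: first, $v \in \mathcal{W}^{s,p}(\mathbb{R}^N)$, since truncation is $1$-Lipschitz and hence $[v]_{C^{0,\alpha_{s,p}}(\mathbb{R}^N)} \le [U]_{C^{0,\alpha_{s,p}}(\mathbb{R}^N)}$ and $[v]_{W^{s,p}(\mathbb{R}^N)} \le [U]_{W^{s,p}(\mathbb{R}^N)}$; second, $v \equiv 0$ on $K$ by definition of $M_+$, so $\int v\,d\mu = 0$. Setting $\tilde U := U - M_+$ (so $J_p(U(x) - U(y)) = J_p(\tilde U(x) - \tilde U(y))$ and $v = \tilde U_+$), the weak equation becomes
$$\iint_{\mathbb{R}^N \times \mathbb{R}^N} \frac{J_p(\tilde U(x) - \tilde U(y))\,(\tilde U_+(x) - \tilde U_+(y))}{|x-y|^{N+s\,p}}\,dx\,dy = 0.$$
Combined with the elementary pointwise inequality $J_p(a-b)(a_+ - b_+) \ge |a_+ - b_+|^p$ for all $a,b \in \mathbb{R}$ (verified by a four-case sign analysis, the mixed-sign cases using $|a-b| \ge a_+, b_+$), this forces $[\tilde U_+]_{W^{s,p}(\mathbb{R}^N)} = 0$. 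Hence $\tilde U_+$ is constant; being continuous and vanishing on the nonempty set $K$, it is identically zero, so $U \le M_+$ on $\mathbb{R}^N$. The symmetric argument with test function $(M_- - U)_+$ gives $U \ge M_-$.

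The main technical obstacle is simply the admissibility check for $v$: one must be careful that $v$ genuinely belongs to $\mathcal{W}^{s,p}(\mathbb{R}^N)$, so that it is a legitimate test function in the (a priori restricted) weak formulation. The pointwise $J_p$ inequality is also delicate in the sign-mixing cases but standard. Unlike the Moser-type iteration alluded to in the introduction, no iteration is required here: the compactness of $\operatorname{supp}\mu$, together with the vanishing $\int v\,d\mu = 0$ produced by one single truncation, collapses the right-hand side to zero and finishes the argument in one stroke.
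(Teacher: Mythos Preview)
Your proof is correct and takes a genuinely different (and simpler) route from the paper. The paper proceeds via a Moser-type iteration: after reducing to $U(z)=0$, it shows that $|U|$ is a weak subsolution, tests with powers $\varphi=U_M^\beta=(\min\{|U|,M\})^\beta$, applies the fractional Hardy inequality on $\mathbb{R}^N\setminus\{z\}$ to the left-hand side, and sends $\beta\to\infty$ and then $M\to\infty$ to conclude $\|U_M\|_{L^\infty}\le\max_K|U|$. By contrast, you use a single truncation $(U-M_+)_+$ together with the pointwise inequality $J_p(a-b)(a_+-b_+)\ge|a_+-b_+|^p$, which immediately kills the seminorm and yields a clean weak maximum principle. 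Your argument is shorter, avoids Hardy's inequality entirely, and makes transparent that the result is really a comparison/maximum principle rather than a regularity statement; the paper's approach, on the other hand, showcases the interplay with the Hardy constant that is thematically central elsewhere in the paper. One minor point worth making explicit: your deduction of the final identity from $M_-\le U\le M_+$ uses that $M_\pm$ are attained on $K$ and that $M_-\le U(z)\le M_+$, so $\max(|M_+-U(z)|,|M_--U(z)|)\le\max_K|U-U(z)|$; this is routine but perhaps deserves one line.
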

\begin{proof}
We fix $z\in\mathbb{R}^N$ and observe that $U-U(z)\in \mathcal{W}^{s,p}(\mathbb{R}^N)$ still solves the same equation. Thus,
without loss of generality, we can suppose that $U(z)=0$. 
We first observe that by proceeding as in \cite[Lemma 3.4]{BP}, we get that $|U|$ is a subsolution of a similar equation, i.e.
\[
\iint_{\mathbb{R}^N\times\mathbb{R}^N} \frac{J_p(|U(x)|-|U(y)|)\,(\varphi(x)-\varphi(y))}{|x-y|^{N+s\,p}}\,dx\,dy\le \int_{\mathbb{R}^N}\varphi\,d|\mu|,
\]
for every $\varphi\in \mathcal{W}^{s,p}(\mathbb{R}^N)$ such that $\varphi\ge 0$.
We now take $M>0$ and $\beta\ge 1$ and insert the test function
\[
\varphi=U_M^\beta:=\left(\min\{|U|,M\}\right)^\beta.
\]
By using \cite[Lemma A.2]{BP}, we get
\[
\frac{\beta\,p^p}{(p+\beta-1)^p}\,\left[U_M^\frac{\beta+p-1}{p}\right]^p_{W^{s,p}(\mathbb{R}^N)}\le \int_{\mathbb{R}^N} U_M^\beta\,d|\mu|=\int_K U_M^\beta\,d|\mu|\le |\mu|(\mathbb{R}^N)\,\max_K (U_M)^\beta.
\]
On the left-hand side, thanks to Proposition \ref{prop:homo} below, we can apply the fractional Hardy inequality for $\mathscr{D}^{s,p}_0(\mathbb{R}^N\setminus\{z\})$. Thus we get
\[
\mathfrak{h}_{s,p}(\mathbb{R}^N\setminus\{0\})\,\frac{\beta\,p^p}{(p+\beta-1)^p}\,\int_{\mathbb{R}^N} \frac{U_M^{\beta+p-1}}{|x-z|^{s\,p}}\,dx\le |\mu|(\mathbb{R}^N)\,\max_K (U_M)^\beta.
\]
We now raise both sided to the power $1/(\beta+p-1)$. This yields
\[
\left(\int_{\mathbb{R}^N} \frac{U_M^{\beta+p-1}}{|x-z|^{s\,p}}\,dx\right)^\frac{1}{\beta+p-1}\le \left(\frac{1}{\mathfrak{h}_{s,p}(\mathbb{R}^N\setminus\{0\})}\,\frac{(p+\beta-1)^p}{\beta\,p^p}\,|\mu|(\mathbb{R}^N)\,\max_K (U_M)^\beta\right)^\frac{1}{\beta+p-1}.
\]
This is valid for every $M>0$ and $\beta\ge 1$. We take the limit as $\beta$ goes to $\infty$, so to obtain
\[
\|U_M\|_{L^\infty(\mathbb{R}^N)}\le \max_K (U_M).
\]
Finally, since $U\in C(\mathbb{R}^N)$, it is bounded on the compact set $K$ and we have 
\[
\max_K (U_M)=\max_K |U|,
\]
for $M$ large enough. Thus, by finally taking the limit as $M$ goes to $\infty$, we conclude.
\end{proof}
By joining Propositions \ref{prop:PDE} and \ref{prop:limita}, we get the following
\begin{theorem}
\label{thm:bound}
Let $u\in \mathcal{W}^{s,p}(\mathbb{R}^N)$ be an extremal for $\mathfrak{m}_{s,p}(\mathbb{R}^N)$. Then we have $u\in L^\infty(\mathbb{R}^N)$ and 
\[
\min\{u(x_0),u(y_0)\}\le u(x)\le \max\{u(x_0),u(y_0)\},\qquad \mbox{ for every }x\in\mathbb{R}^N,
\]
where $x_0,\,y_0\in\mathbb{R}^N$ are the unique two distinct points such that
\[
[u]_{C^{0,\alpha_{s,p}}(\mathbb{R}^N)}=\frac{|u(x_0)-u(y_0)|}{|x_0-y_0|^{\alpha_{s,p}}}.
\]
\end{theorem}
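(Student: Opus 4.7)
The plan is to combine Proposition \ref{prop:PDE}, which tells us that any extremal is a weak solution of a Dirac-type equation, with Proposition \ref{prop:limita}, which is tailored exactly for such right-hand sides.

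First I would invoke Proposition \ref{prop:PDE} to obtain that $u$ satisfies, in the weak sense specified in \eqref{weak_mu},
\[
(-\Delta_p)^s u = c\,(\delta_{x_0}-\delta_{y_0}) \qquad \text{in } \mathbb{R}^N,
\]
with $c = \mathfrak{m}_{s,p}(\mathbb{R}^N)\, J_p(u(x_0)-u(y_0))\,|x_0-y_0|^{N-sp}$. Note that the measure $\mu := c\,(\delta_{x_0}-\delta_{y_0})$ is trivially a compactly supported finite (signed) measure, with support contained in $K := \{x_0,y_0\}$, so all the hypotheses of Proposition \ref{prop:limita} are in force (the constant $c$ is not required to be nonzero, but in any event it is, since the extremal is non-constant and hence $u(x_0)\ne u(y_0)$).

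Next I would apply Proposition \ref{prop:limita} to $U=u$. This immediately yields $u \in L^\infty(\mathbb{R}^N)$, and moreover, for every $z \in \mathbb{R}^N$,
\[
\|u-u(z)\|_{L^\infty(\mathbb{R}^N)} = \max_{K}|u-u(z)| = \max\bigl\{|u(x_0)-u(z)|,\ |u(y_0)-u(z)|\bigr\}.
\]
This already proves the first claim of the theorem.

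To obtain the sharp two-sided pinching, assume without loss of generality that $u(x_0)\le u(y_0)$ (the other case is symmetric). Choosing $z=x_0$ in the displayed identity gives $|u(x)-u(x_0)|\le u(y_0)-u(x_0)$ for every $x\in\mathbb{R}^N$, so in particular
\[
u(x)\le u(y_0) \qquad \text{for every } x\in\mathbb{R}^N.
\]
Choosing $z=y_0$ instead gives $|u(x)-u(y_0)|\le u(y_0)-u(x_0)$, which forces
\[
u(x)\ge u(x_0) \qquad \text{for every } x\in\mathbb{R}^N.
\]
Combining these two bounds yields the desired inequality. There is no real obstacle here: the substance lies in Propositions \ref{prop:PDE} and \ref{prop:limita}, and the present theorem is a short elementary consequence once one notices that the two choices $z = x_0$ and $z = y_0$ cover the two sides of the pinching separately.
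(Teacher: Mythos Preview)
Your proposal is correct and follows essentially the same approach as the paper: invoke Proposition~\ref{prop:PDE} to identify the equation solved by $u$, then apply Proposition~\ref{prop:limita} with the two choices $z=x_0$ and $z=y_0$ to obtain each side of the pinching. The only cosmetic difference is that the paper normalizes with $u(x_0)>u(y_0)$ rather than $u(x_0)\le u(y_0)$.
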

\begin{proof}
Without loss of generality, we can suppose that $u(x_0)>u(y_0)$.
We can apply Proposition \ref{prop:limita} with $K=\{x_0,y_0\}$ and $z=y_0$. This gives for every $x\in\mathbb{R}^N$
\[
u(x)-u(y_0)\le |u(x)-u(y_0)|\le |u(x_0)-u(y_0)|=u(x_0)-u(y_0),
\]
that is $u(x)\le u(x_0)$ for every $x\in\mathbb{R}^N$. Similarly, with $z=x_0$ we have 
\[
u(x)-u(x_0)\ge -|u(x)-u(x_0)|\ge -|u(x_0)-u(y_0)|=u(y_0)-u(x_0),
\]
which shows that $y_0$ is a minimum point for $u$.
\end{proof}

\subsection{Regularity: improved H\"older continuity}

We will show that extremals for $\mathfrak{m}_{s,p}(\mathbb{R}^N)$ are more regular. This is a consequence of the following more general result.
\begin{proposition}
\label{prop:holder_reg}
Let $N\ge 1$, $1<p<\infty$ and $0<s<1$ be such that $s\,p>N$. Let $\mu$ be a compactly supported finite measure. Let
 $U\in \mathcal{W}^{s,p}(\mathbb{R}^N)$ be a weak solution of 
\[
(-\Delta_p)^s U=\mu,\qquad \mbox{ in }\mathbb{R}^N,
\]
i.e. $U$ satisfies \eqref{weak_mu}.
If we indicate by $K$ the support of $\mu$, we have 
\begin{equation}
\label{almostC1}
U\in C^{0,\alpha}_{\rm loc}(\mathbb{R}^N\setminus K),\qquad \mbox{ for every } \alpha_{s,p}\le \alpha<\min\left\{\frac{s\,p}{p-1},\,1\right\}.
\end{equation}
Moreover, we also have 
\begin{equation}
\label{sharp_delta}
U\in C^{0,\delta}_{\rm loc}(\mathbb{R}^N),\qquad \mbox{ for every } \alpha_{s,p}\le \delta<\frac{s\,p-N}{p-1},
\end{equation}
and for every $\alpha_{s,p}\le \delta<(s\,p-N)/(p-1)$ there exists a constant $C=C(N,s,p,\delta)>0$ such that
\[
[U]_{C^{0,\delta}(B_r(0))}\leq \frac{C}{r^\delta}\Bigg [\|U\|_{L^\infty(\mathbb{R}^N)}+r^\frac{s\,p-N}{p-1}\,\Big(|\mu|(\mathbb{R}^N)\Big)^\frac{1}{p-1}\Bigg ],
\]
holds for every $r>0$.
\end{proposition}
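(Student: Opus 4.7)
The proof splits into two regimes. Away from the support $K$ of $\mu$ the equation is homogeneous and the full almost-Lipschitz interior regularity is available; on the whole space the measure datum forces the smaller exponent $(sp-N)/(p-1)$, which matches the scaling of the presumed fundamental solution $x \mapsto |x|^{(sp-N)/(p-1)}$ for $(-\Delta_p)^s$.

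First, by Proposition~\ref{prop:limita} we already have $U \in L^\infty(\mathbb{R}^N)$, so the nonlocal tails appearing in any local estimate for $(-\Delta_p)^s$ at a point $x_0$, that is integrals of the form
\[
\int_{\mathbb{R}^N\setminus B_r(x_0)}\frac{|U(y)|^{p-1}}{|y-x_0|^{N+sp}}\,dy,
\]
are controlled by $\|U\|_{L^\infty(\mathbb{R}^N)}$ alone. For any open set $\Omega \Subset \mathbb{R}^N\setminus K$, testing \eqref{weak_mu} against $\varphi \in C^\infty_0(\Omega)$ shows that $U$ is a locally bounded weak solution of the homogeneous equation $(-\Delta_p)^s U = 0$ in $\Omega$. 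Applying the local higher-order Hölder regularity from \cite{BLS} (supplemented by \cite{GL} in the subquadratic case) yields $U \in C^{0,\alpha}_{\rm loc}(\Omega)$ for every $\alpha < \min\{sp/(p-1),1\}$; since $\alpha_{s,p} = s - N/p < (sp-N)/(p-1) \le sp/(p-1)$ when $sp>N$, a standard covering of $\mathbb{R}^N\setminus K$ produces \eqref{almostC1}.

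For \eqref{sharp_delta} and the quantitative seminorm bound I invoke the Hölder a priori estimate for measure-data solutions of $(-\Delta_p)^s$ available in \cite{GL}. In localized form, on a ball $B_R(x_0)$ such an estimate should read
\[
[U]_{C^{0,\delta}(B_{R/2}(x_0))} \le \frac{C}{R^\delta}\left[\|U\|_{L^\infty(\mathbb{R}^N)} + R^{\frac{sp-N}{p-1}}\,\big(|\mu|(\mathbb{R}^N)\big)^{\frac{1}{p-1}}\right],
\]
for every $\delta < (sp-N)/(p-1)$, with $C = C(N,s,p,\delta)$. The balance of powers of $R$ is dictated by the scaling $U_\lambda(x) = \lambda^{-(sp-N)/(p-1)}\,U(\lambda x)$, which preserves both $(-\Delta_p)^s$ (up to the rescaling of the right-hand side) and the fundamental solution. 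Specializing this estimate to $x_0 = 0$ and $R = 2r$ gives precisely the inequality in the statement.

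The main obstacle is to verify that the theorems of \cite{BLS, GL} apply in our precise setting: the solution $U$ lives in the homogeneous space $\mathcal{W}^{s,p}(\mathbb{R}^N)$ rather than in a localized $W^{s,p}$ space with bulk $L^p$ integrability, and the datum $\mu$ is signed. The first issue is handled by the global $L^\infty$ bound from Proposition~\ref{prop:limita}, which controls both tail integrals and local $L^p$ norms. The second can be addressed by splitting $\mu = \mu^+ - \mu^-$ into its Jordan components (both compactly supported positive finite measures) and exploiting the additivity at the level of the nonlinear Riesz-potential bounds that drive the proofs in \cite{GL}. Tracking the explicit dependence on $\|U\|_{L^\infty(\mathbb{R}^N)}$ and $|\mu|(\mathbb{R}^N)$ through that iteration is the most delicate part of the translation and is where the final constant $C = C(N,s,p,\delta)$ is extracted.
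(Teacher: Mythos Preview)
Your treatment of \eqref{almostC1} is essentially the paper's: boundedness from Proposition~\ref{prop:limita} controls the tails, and away from $K$ the equation is homogeneous, so \cite{BLS,GL} apply directly.

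The gap is in your argument for \eqref{sharp_delta}. You invoke a ready-made H\"older estimate for \emph{measure-data} solutions from \cite{GL}, but the results the paper cites (\cite[Theorem 1.4]{BLS} and \cite[Theorem 1.2]{GL}) are stated for $L^q$ right-hand sides, in particular $q=1$, not for general Radon measures. The paper addresses this with an explicit regularization: mollify $\mu$ to $f_n=\mu\ast\rho_n\in C^\infty_0$, solve the nonlocal Dirichlet problem $(-\Delta_p)^s u_n=f_n$ in a large ball $B_R(0)\Supset K$ with boundary datum $U$, prove that $u_n\to U$ uniformly on $\overline{B_R(0)}$ (via energy bounds, compactness in $\widetilde{W}^{s,p}_0$, and uniqueness of the limiting variational problem), and then pass to the limit in the a~priori estimate applied to $u_n$ with $\|f_n\|_{L^1}\le |\mu|(\mathbb{R}^N)$. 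This approximation step is the technical core of the proof and you have skipped it entirely.

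Your proposed substitute---splitting $\mu=\mu^+-\mu^-$ and appealing to ``additivity at the level of nonlinear Riesz-potential bounds''---does not work as stated: the operator $(-\Delta_p)^s$ is nonlinear for $p\neq 2$, so there is no way to superpose solutions for $\mu^+$ and $\mu^-$, and the Wolff-potential machinery in \cite{KMS} or \cite{GL} does not furnish an additive comparison of this kind. The signedness of $\mu$ is in fact a non-issue once one regularizes, since $\|f_n\|_{L^1}\le |\mu|(\mathbb{R}^N)$ handles both pieces simultaneously.
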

\begin{proof}
We will appeal to the regularity results for {\it local weak solutions} taken from\footnote{We observe that the results of \cite{GL} concerns the case $1<p<2$. Thus, they are needed to handle the case $N=1$ only, where it may happen that $1<p<2$ and $s\,p>1=N$. For $N\ge 2$, the condition $s\,p>N$ automatically implies that $p>2$.} \cite{BLS} and \cite{GL}. In order to do this, we need to assure at first that $U$ satisfies the following weighted integrability property
\[
\int_{\mathbb{R}^N} \frac{|U|^{p-1}}{(1+|x|)^{N+s\,p}}\,dx<+\infty.
\]
This follows, for example, from the fact that $U$ is globally bounded, thanks to Proposition \ref{prop:limita}.
\par
The first result \eqref{almostC1} immediately follows from \cite[Theorem 5.2]{BLS} and \cite[Theorem 1.1]{GL}, since $U$ is, in particular, a local weak solution of 
\[
(-\Delta_p)^s U=0,\qquad \mbox{ in }\mathbb{R}^N\setminus K.
\] 
As for \eqref{sharp_delta}, if the measure $\mu$ is absolutely continuous with respect to the $N-$dimensional Lebesgue measure, this is a direct consequence of \cite[Theorem 1.4]{BLS} and \cite[Theorem 1.2]{GL} with $q=1$ there.
When $\mu$ is a general measure, we need to go through a regularization procedure and get the desired result from the a priori estimates of \cite[Theorem 1.4]{BLS} and \cite[Theorem 1.2]{GL}. This is possible because $s\,p>N$, of course. We give the details, for completeness.
\vskip.2cm\noindent
{\bf Step 1: approximation}. We fix the ball $B_R(0)$ and choose $R$ large enough so that $K\Subset B_R(0)$. We now set 
\[
f_n=\mu\ast \rho_n,
\] 
where $\{\rho_n\}_{n\in\mathbb{N}}$ is the usual family of Friedrichs mollifiers. By construction, we have $f_n\in C^\infty_0(\mathbb{R}^N)$ and
\begin{equation}
\label{uniformeL1}
\|f_n\|_{L^1(\mathbb{R}^N)}\le |\mu|(\mathbb{R}^N)<+\infty,\qquad \mbox{ for every } n\in\mathbb{N}.
\end{equation}
Moreover, the sequence $\{f_n\}_{n\in\mathbb{N}}$ is $\ast-$weak converging to $\mu$, as $n$ goes to $\infty$ (see \cite[Theorem 2.2]{AFP}).
\par
We now consider the following nonlocal boundary value problem
\[
\left\{\begin{array}{rcll}
(-\Delta_p)^s u&=&f_n,& \mbox{ in } B_R(0),\\
u&=& U,& \mbox{ in }\mathbb{R}^N\setminus B_R(0).
\end{array}
\right.
\]
This admits a unique weak solution $u_n\in \mathcal{W}^{s,p}(\mathbb{R}^N)$ such that $u_n-U\in \widetilde{W}^{s,p}_0(B_R(0))$, i.e. $u_n$ satisfies
\[
\begin{split}
\iint_{\mathbb{R}^N\times\mathbb{R}^N} &\frac{J_p(u_n(x)-u_n(y))\,(\varphi(x)-\varphi(y))}{|x-y|^{N+s\,p}}\,dx\,dy=\int_{B_R(0)} f_n\,\varphi\,dx,\qquad \mbox{ for every } \varphi\in \widetilde{W}^{s,p}_0(B_R(0)).
\end{split}
\]
Existence of a solution can be established by variational methods. More precisely, $u_n$ is the unique solution of the following problem
\[
\inf_{\varphi\in \mathcal{W}^{s,p}(\mathbb{R}^N)} \left\{\frac{1}{p}\,[\varphi]^p_{W^{s,p}(\mathbb{R}^N)}-\int_{B_R(0)} f_n\,\varphi\,dx\, :\, \varphi-U\in \widetilde{W}^{s,p}_0(B_R(0))\right\}.
\]
We claim that $\{u_n\}_{n\in\mathbb{N}}$ converges to $U$ as $n$ goes to $\infty$, up to a subsequence.
By testing the weak formulation of the equation with $\varphi=u_n$, we get
\[
\begin{split}
[u_n]_{W^{s,p}(\mathbb{R}^N)}^p=\int_{B_R(0)} f_n\,u_n\,dx&\le \|f_n\|_{L^1(B_R(0))}\,\|u_n\|_{L^\infty(B_R(0))}\\
&\le \|f_n\|_{L^1(B_R(0))}\,\left(\|u_n-U\|_{L^\infty(B_R(0))}+\|U\|_{L^\infty(B_R(0))}\right).
\end{split}
\]
Observe that the last term is finite, since functions in $\mathcal{W}^{s,p}(\mathbb{R}^N)$ are locally bounded.
Moreover, since $u_n-U\in \widetilde{W}^{s,p}_0(B_R(0))$, from the definition of $\lambda_{p,\infty}^s(B_R(0))$ and Proposition \ref{prop:embed_unif}, we have 
\[
\begin{split}
\|u_n-U\|_{L^\infty(B_R(0))}&\le \left(\frac{1}{\lambda_{p,\infty}^s(B_R(0))}\right)^\frac{1}{p}\,[u_n-U]_{W^{s,p}(\mathbb{R}^N)}\\
&\le \left(\frac{1}{\lambda_{p,\infty}^s(B_R(0))}\right)^\frac{1}{p}\,\left([u_n]_{W^{s,p}(\mathbb{R}^N)}+[U]_{W^{s,p}(\mathbb{R}^N)}\right).\\
\end{split}
\]
By joining the previous two estimates and keeping \eqref{uniformeL1} into account, it is not difficult to see that there exists a constant $C=C(N,R,p,s,\mu)>0$ such that
\[
[u_n]_{W^{s,p}(\mathbb{R}^N)}^p\le C,\qquad \mbox{ for every }n\in\mathbb{N}.
\]
On the other hand, we can also bound uniformly the $L^p$ norms, by using the boundary datum and the fractional Poincar\'e inequality (see for example \cite[Lemma 2.4]{BLP}), i.\,e.
\[
\begin{split}
\|u_n\|_{L^p(B_R(0))}&\le \|u_n-U\|_{L^p(B_R(0))}+\|U\|_{L^p(B_R(0))}\le C\,R^s\,[u_n-U]_{W^{s,p}(\mathbb{R}^N)}+\|U\|_{L^p(B_R)},
\end{split}
\] 
for some $C=C(N,p,s)>0$ .
The last term in turn is uniformly bounded, thanks to what we said above. 
\par
In conclusion, $\{u_n-U\}_{n\in\mathbb{N}}\subseteq \widetilde{W}^{s,p}_0(B_R(0))$ is uniformly bounded. Thus, thanks to \cite[Theorem 2.7]{BLP}, up to a subsequence it converges weakly in $\widetilde{W}^{s,p}_0(B_R(0))$ and strongly in $L^p(B_R(0))$ to a function $\underline{u}\in \widetilde{W}^{s,p}_0(B_R(0))$. Such a convergence is uniform on $\overline{B_R(0)}$, as well (thanks to Proposition \ref{prop:embed_unif}, for example). 
We claim that 
\begin{equation}
\label{provalo!}
\underline{u}\equiv 0,
\end{equation}
this would give that $u_n-U$ actually converges to $0$, as $n$ goes to $\infty$. In order to prove \eqref{provalo!}, we test the minimality of $u_n$ against $U$. This yields
\begin{equation}
\label{protogamma}
\frac{1}{p}\,[u_n]^p_{W^{s,p}(\mathbb{R}^N)}-\int_{B_R(0)} f_n\,u_n\,dx\le \frac{1}{p}\,[U]^p_{W^{s,p}(\mathbb{R}^N)}-\int_{B_R(0)} f_n\,U\,dx.
\end{equation}
We now observe that 
\[
\lim_{n\to\infty} \int_{B_R(0)} f_n\,U\,dx=\int_{B_R(0)} U\,d\mu,
\]
and 
\[
\begin{split}
\lim_{n\to\infty}\int_{B_R(0)} f_n\,u_n\,dx&=\lim_{n\to\infty}\int_{B_R(0)} f_n\,(u_n-U)\,dx+\lim_{n\to\infty}\int_{B_R(0)} f_n\,U\,dx\\
&=\int_{B_R(0)} \underline{u}\,d\mu+\int_{B_R(0)} U\,d\mu,
\end{split}
\]
where we used the $\ast-$convergence of $f_n$ and the uniform convergence of $u_n-U$. Moreover, by weak convergence we have 
\[
\liminf_{n\to\infty} \frac{1}{p}\,[u_n]^p_{W^{s,p}(\mathbb{R}^N)}\ge \frac{1}{p}\,[\underline{u}+U]^p_{W^{s,p}(\mathbb{R}^N)}.
\]
Thus, by taking the limit as $n$ goes to $\infty$ in \eqref{protogamma}, we get
\begin{equation}
\label{sottominimo}
\frac{1}{p}\,[\underline{u}+U]^p_{W^{s,p}(\mathbb{R}^N)}-\int_{B_R(0)} (\underline{u}+U)\,d\mu\le \frac{1}{p}\,[U]^p_{W^{s,p}(\mathbb{R}^N)}-\int_{B_R(0)} U\,d\mu.
\end{equation}
It is only left to observe that $U$ is the {\it unique} solution of the strictly convex problem
\[
\inf_{\varphi\in \mathcal{W}^{s,p}(\mathbb{R}^N)} \left\{\frac{1}{p}\,[\varphi]^p_{W^{s,p}(\mathbb{R}^N)}-\int_{B_R(0)} \varphi\,d\mu\, :\, \varphi-U\in \widetilde{W}^{s,p}_0(B_R(0))\right\}.
\]
Since $\underline{u}+U$ is still admissible in this problem, from \eqref{sottominimo} we get \eqref{provalo!}.
\vskip.2cm\noindent
{\bf Step 2: uniform H\"older estimate}.
By using that $s\,p>N$, we can apply \cite[Theorem 1.4]{BLS} or \cite[Theorem 1.2]{GL} with $f=f_n$ and $q=1$, so to obtain that 
\[
u_n\in C^{0,\delta}_{\rm loc}(B_R(0)),\qquad \mbox{ for every } \alpha_{s,p}\le \delta<\frac{s\,p-N}{p-1}.
\]
Moreover, from \cite[Theorem 1.4]{BLS} and \cite[Theorem 1.2]{GL} we also have the following a priori estimate: for every $\alpha_{s,p}\le \delta<(s\,p-N)/(p-1)$ and every $r\le R/8$, there exists a constant $C=C(N,s,p,\delta)>0$ such that 
\begin{equation}
\label{stimona}
\begin{split}
[u_n]_{C^{0,\delta}(B_{r/8}(0))}\leq \frac{C}{r^\delta}&\,\Bigg [\|u_n\|_{L^\infty(B_{r}(0))}+\left(r^{s\,p}\,\int_{\mathbb{R}^N\setminus B_{r}(0)}\frac{|u_n|^{p-1}}{|y|^{N+s\,p}}\,  dy\right)^\frac{1}{p-1}\\
&+\left(r^{s\,p-N}\,\|f_n\|_{L^1(B_{r}(0))}\right)^\frac{1}{p-1}\Bigg ].
\end{split}
\end{equation}
For the first term on the right-hand side, we can use the uniform convergence, already obtained in {\bf Step 1}, so to get
\[
\lim_{n\to\infty}\|u_n\|_{L^\infty(B_{r}(0))}=\|U\|_{L^\infty(B_{r}(0))}.
\] 
For the ``tail'' term, by using again the uniform convergence on $\overline{B_R(0)}$ and the fact that $u_n\equiv U$ outside $B_R(0)$,  we have 
\[
\begin{split}
\lim_{n\to\infty}\int_{\mathbb{R}^N\setminus B_{r}(0)}\frac{|u_n|^{p-1}}{|y|^{N+s\,p}}\,  dy&=\lim_{n\to\infty}\int_{B_R(0)\setminus B_{r}(0)}\frac{|u_n|^{p-1}}{|y|^{N+s\,p}}\,  dy+\int_{\mathbb{R}^N\setminus B_{R}(0)}\frac{|U|^{p-1}}{|y|^{N+s\,p}}\,  dy\\
&=\int_{\mathbb{R}^N\setminus B_{r}(0)}\frac{|U|^{p-1}}{|y|^{N+s\,p}}\,  dy.
\end{split}
\]
Finally, we can use \eqref{uniformeL1} to bound the term in \eqref{stimona} containing $f_n$. By proceeding in this way and taking the limit as $n$ goes to $\infty$, we get
\[
\begin{split}
[U]_{C^{0,\delta}(B_{r/8}(0))}\leq \frac{C}{r^\delta}&\,\Bigg [\|U\|_{L^\infty(B_{r}(0))}+\left(r^{s\,p}\,\int_{\mathbb{R}^N\setminus B_{r}(0)}\frac{|U|^{p-1}}{|y|^{N+s\,p}}\,  dy\right)^\frac{1}{p-1}+r^\frac{s\,p-N}{p-1}\,\Big(|\mu|(\mathbb{R}^N)\Big)^\frac{1}{p-1}\Bigg ],
\end{split}
\]
for some $C=C(N,s,p,\delta)>0$.
This shows that $U\in C^{0,\delta}(B_{r/8}(0))$ for every $r\le R/8$ and every $R>0$ large enough. By arbitrariness of $r$ and $R$, we thus get the claimed conclusion. 
\par
Finally, by using that $U\in L^\infty(\mathbb{R}^N)$ thanks Proposition \ref{prop:limita}, we have 
\[
\begin{split}
r^{s\,p}\,\int_{\mathbb{R}^N\setminus B_{r}(0)}\frac{|U|^{p-1}}{|y|^{N+s\,p}}\,  dy&\le \|U\|^{p-1}_{L^\infty(\mathbb{R}^N\setminus B_r(0))}\,r^{s\,p}\,\int_{\mathbb{R}^N\setminus B_{r}(0)}\frac{dy}{|y|^{N+s\,p}}\\
&=\|U\|^{p-1}_{L^\infty(\mathbb{R}^N\setminus B_r(0))}\,\frac{N\,\omega_N}{s\,p}\le \omega_N\,\|U\|^{p-1}_{L^\infty(\mathbb{R}^N\setminus B_r(0))}.
\end{split}
\]
From the previous estimate, we can infer
\[
[U]_{C^{0,\delta}(B_{r/8}(0))}\leq \frac{C}{r^\delta}\Bigg [\|U\|_{L^\infty(\mathbb{R}^N)}+r^\frac{s\,p-N}{p-1}\,\Big(|\mu|(\mathbb{R}^N)\Big)^\frac{1}{p-1}\Bigg ],
\]
possibly for a different constant $C=C(N,s,p,\delta)>0$.
\end{proof}
By joining Propositions \ref{prop:PDE} and \ref{prop:holder_reg}, we immediately get the following
\begin{theorem}
\label{thm:holderreg}
Let $u\in \mathcal{W}^{s,p}(\mathbb{R}^N)$ be an extremal for $\mathfrak{m}_{s,p}(\mathbb{R}^N)$. Then we have
\[
u\in C^{0,\alpha}_{\rm loc}(\mathbb{R}^N\setminus \{x_0,y_0\}),\qquad \mbox{ for every } \alpha_{s,p}\le \alpha<\min\left\{\frac{s\,p}{p-1},\,1\right\},
\]
and also
\[
u\in C^{0,\delta}_{\rm loc}(\mathbb{R}^N),\qquad \mbox{ for every } \alpha_{s,p}\le \delta<\frac{s\,p-N}{p-1}.
\]
Here $x_0,\,y_0\in\mathbb{R}^N$ are the unique two distinct points such that
\[
[u]_{C^{0,\alpha_{s,p}}(\mathbb{R}^N)}=\frac{|u(x_0)-u(y_0)|}{|x_0-y_0|^{\alpha_{s,p}}}.
\]
Moreover, for every $\alpha_{s,p}\le \delta<(s\,p-N)/(p-1)$ there exists a constant $C=C(N,s,p,\delta)>0$ such that
\[
[u]_{C^{0,\delta}(B_r(0))}\leq \frac{C}{r^\delta}\,\left[\max\Big\{|u(x_0)|,\, |u(y_0)|\Big\}+r^\frac{s\,p-N}{p-1}\,\Big(\mathfrak{m}_{s,p}(\mathbb{R}^N)\Big)^\frac{1}{p-1}\,\frac{|u(x_0)-u(y_0)|}{|x_0-y_0|^\frac{s\,p-N}{p-1}}\right],
\]
holds for every $r>0$.
\end{theorem}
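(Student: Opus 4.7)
The plan is to observe that this theorem is essentially an immediate consequence of assembling three prior results: the Euler-Lagrange equation from Proposition \ref{prop:PDE}, the regularity theory with measure data from Proposition \ref{prop:holder_reg}, and the $L^\infty$ bound from Theorem \ref{thm:bound}. The only real work is the bookkeeping needed to rewrite the abstract a priori estimate in Proposition \ref{prop:holder_reg} into the quantitative form stated here.

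First, I would apply Proposition \ref{prop:PDE} to get that the extremal $u$ weakly solves
\[
(-\Delta_p)^s u = \mu, \qquad \mbox{ in } \mathbb{R}^N,
\]
where
\[
\mu = \mathfrak{m}_{s,p}(\mathbb{R}^N)\,\frac{J_p(u(x_0)-u(y_0))}{|x_0-y_0|^{s\,p-N}}\,(\delta_{x_0}-\delta_{y_0}).
\]
This is a finite signed measure, compactly supported on $K=\{x_0,y_0\}$, whose total variation is exactly
\[
|\mu|(\mathbb{R}^N) = 2\,\mathfrak{m}_{s,p}(\mathbb{R}^N)\,\frac{|u(x_0)-u(y_0)|^{p-1}}{|x_0-y_0|^{s\,p-N}}.
\]

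Next, I would invoke Proposition \ref{prop:holder_reg} directly. The two statements \eqref{almostC1} and \eqref{sharp_delta} applied with this $\mu$ (whose support is $\{x_0,y_0\}$) immediately give the two local H\"older assertions in the theorem. To derive the displayed quantitative estimate for $[u]_{C^{0,\delta}(B_r(0))}$, I would substitute into the a priori bound of Proposition \ref{prop:holder_reg} two ingredients: the value of $|\mu|(\mathbb{R}^N)$ computed above, and the bound on $\|u\|_{L^\infty(\mathbb{R}^N)}$ coming from Theorem \ref{thm:bound}. The latter yields
\[
\|u\|_{L^\infty(\mathbb{R}^N)} \le \max\Big\{|u(x_0)|,\,|u(y_0)|\Big\},
\]
since the componentwise bounds $\min\{u(x_0),u(y_0)\}\le u\le \max\{u(x_0),u(y_0)\}$ imply this sup bound on $|u|$. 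The harmless multiplicative factor $2^{1/(p-1)}$ coming from $|\mu|(\mathbb{R}^N)^{1/(p-1)}$ can be absorbed into the universal constant $C=C(N,s,p,\delta)$.

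There is no real obstacle here: once Propositions \ref{prop:PDE} and \ref{prop:holder_reg} and Theorem \ref{thm:bound} are in hand, the theorem follows by reading off their statements with the specific $\mu$ provided by the Euler-Lagrange equation. The only point deserving a moment of care is checking that the hypotheses of Proposition \ref{prop:holder_reg} are actually satisfied (namely that $u$ lies in $\mathcal{W}^{s,p}(\mathbb{R}^N)$ and weakly solves $(-\Delta_p)^s u =\mu$ for a compactly supported finite measure), both of which are granted for free by our setup.
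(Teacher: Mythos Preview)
Your proposal is correct and matches the paper's approach exactly: the paper itself states only that the theorem follows ``by joining Propositions \ref{prop:PDE} and \ref{prop:holder_reg}'', and you have supplied precisely that bookkeeping, together with the $L^\infty$ bound from Theorem \ref{thm:bound} needed to make the final estimate explicit. Your computation of $|\mu|(\mathbb{R}^N)$ and the absorption of the $2^{1/(p-1)}$ factor into $C$ are both correct.
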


\section{Asymptotics for the sharp constant}
\label{sec:8}

\subsection{The case $s\searrow N/p$}
We start with the following lower bound, which is interesting in itself: it relates the sharp Morrey constant with the sharp Hardy one.
\begin{corollary}
\label{coro:mistery}
Let $N\ge 1$, $0<s<1$ and $1<p<\infty$ be such that $s\,p>N$. We have
\begin{equation}
\label{lowerdecay}
\mathfrak{m}_{s,p}(\mathbb{R}^N)\ge s\,C\,\frac{\mathfrak{h}_{s,p}(\mathbb{R}^N\setminus\{0\})}{s\,p-N},
\end{equation}
for a constant $C=C(N,p)>0$.
\end{corollary}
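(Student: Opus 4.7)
The plan is to chain three already-established inequalities from the paper, exactly as sketched in the Introduction. The starting point is Theorem \ref{thm:morrey}, which gives
\[
\mathfrak{m}_{s,p}(\mathbb{R}^N) \ge \theta_{N,s,p}\,\Lambda_{s,p}(B_1(0)).
\]
I would first verify that $\theta_{N,s,p}$ is bounded below uniformly in $s\in(0,1)$ by a constant depending only on $N$ and $p$: this is immediate upon plugging $T=2$ into the defining supremum, which yields
\[
\theta_{N,s,p} \ge \frac{\omega_N}{(1+2^s)^p} \ge \frac{\omega_N}{3^p}.
\]

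Next, I would invoke the lower bound \eqref{lam_cap} (extracted from Proposition \ref{prop:poincare_cap} with $\varrho=r=1$ and $x_0=0$), namely
\[
\Lambda_{s,p}(B_1(0)) \ge C_1\,s\,\widetilde{\mathrm{cap}}_{s,p}\big(\{0\};B_1(0)\big),
\]
for a dimensional constant $C_1=C_1(N,p)>0$. Finally, I would apply Lemma \ref{lemma:point} with $\Omega=B_1(0)$ (so that $|\Omega|=\omega_N$) and $\overline x=0$, obtaining
\[
\widetilde{\mathrm{cap}}_{s,p}\big(\{0\};B_1(0)\big) \ge \frac{N\,\omega_N}{s\,p-N}\,\mathfrak{h}_{s,p}(\mathbb{R}^N\setminus\{0\}).
\]

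Composing the three bounds gives \eqref{lowerdecay} with the explicit value $C = N\,\omega_N^2\,C_1/3^p$. There is no real obstacle here: the corollary is essentially a bookkeeping exercise assembling three lower bounds already proved earlier. The only point that genuinely matters is the uniform control of $\theta_{N,s,p}$ as $s$ varies in $(0,1)$, since a prefactor degenerating as $s\searrow N/p$ would wipe out the rate information this corollary is meant to carry over to Lemma \ref{lemma:fastidio}; the choice $T=2$ takes care of this cleanly. The explicit factor $s$ on the right-hand side is harmless in the downstream application, where $s$ stays bounded away from $0$.
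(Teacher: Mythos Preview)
Your proposal is correct and follows exactly the same chain as the paper's proof: Theorem \ref{thm:morrey}, then \eqref{lam_cap}, then Lemma \ref{lemma:point}. Your explicit verification that $\theta_{N,s,p}\ge \omega_N/3^p$ via the choice $T=2$ is a useful detail that the paper leaves implicit when it simply invokes Theorem \ref{thm:morrey}.
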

\begin{proof}
From \eqref{lam_cap} and Lemma \ref{lemma:point},
we obtain the following explicit lower bound
\begin{equation}
\label{loweralpha}
\Lambda_{s,p}(B_r(x_0))\ge \frac{N\,C\,s\,\omega_N}{r^{s\,p}}\,\frac{\mathfrak{h}_{s,p}(\mathbb{R}^N\setminus\{0\})}{s\,p-N}.
\end{equation}
By using this estimate with $x_0=0$ and $r=1$, in conjunction with Theorem \ref{thm:morrey}, we get the claimed result.
\end{proof}
The main result of this section dictates the exact decay rate to $0$ of the sharp Morrey constant, as we reach the borderline case $s\,p=N$.
\begin{proposition}
\label{prop:spN}
Let $1\le N<p<\infty$. We have 
\[
0<\liminf_{s\searrow \frac{N}{p}} \frac{\mathfrak{m}_{s,p}(\mathbb{R}^N)}{(s\,p-N)^{p-1}}\le \limsup_{s\searrow \frac{N}{p}} \frac{\mathfrak{m}_{s,p}(\mathbb{R}^N)}{(s\,p-N)^{p-1}}<+\infty.
\]
\end{proposition}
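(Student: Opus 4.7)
The proposition consists of two independent asymptotic bounds; my plan is to handle them separately, using the lower bound from Corollary~\ref{coro:mistery} on one side and a judicious test function on the other.

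\textbf{The lower bound.} This is essentially a direct consequence of the machinery already in place. I would simply combine the Morrey--Hardy bound
\[
\mathfrak{m}_{s,p}(\mathbb{R}^N)\ge s\,C_1\,\frac{\mathfrak{h}_{s,p}(\mathbb{R}^N\setminus\{0\})}{s\,p-N},
\]
from Corollary~\ref{coro:mistery} with the quantitative lower bound on the fractional Hardy constant from Lemma~\ref{lemma:fastidio}, namely
\[
\mathfrak{h}_{s,p}(\mathbb{R}^N\setminus\{0\})\ge C_2\,\frac{(s\,p-N)^p}{1-s}.
\]
Multiplying these yields $\mathfrak{m}_{s,p}(\mathbb{R}^N)\ge s\,C_1\,C_2\,(s\,p-N)^{p-1}/(1-s)$, and since we are in the regime $p>N$, as $s\searrow N/p$ the factor $s/(1-s)$ tends to the strictly positive limit $(N/p)/(1-N/p)=N/(p-N)$. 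Dividing by $(s\,p-N)^{p-1}$ and passing to the $\liminf$ gives the desired strict positivity of the $\liminf$.

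\textbf{The upper bound.} Here I would test the variational problem \eqref{morreysharp} against a truncation of the putative fundamental solution for $(-\Delta_p)^s$, namely
\[
\varphi_{s,p}(x)=\Big(1-|x|^{\beta_{s,p}}\Big)_+,\qquad \beta_{s,p}:=\frac{s\,p-N}{p-1},
\]
which is the natural fractional analogue of the test function used in the local case and is consistent with the expected regularity of extremals discussed in the introduction. The plan is to show
\[
[\varphi_{s,p}]_{C^{0,\alpha_{s,p}}(\mathbb{R}^N)}^{-p}\,[\varphi_{s,p}]_{W^{s,p}(\mathbb{R}^N)}^p=O\big((s\,p-N)^{p-1}\big),\qquad s\searrow N/p.
\]
For the H\"older seminorm, a direct computation on a radial power function shows that $[\varphi_{s,p}]_{C^{0,\alpha_{s,p}}(\mathbb{R}^N)}$ is bounded below by a positive constant independent of $s$ near the borderline: indeed, taking $x=0$ and $y$ with $|y|\le 1$ gives
\[
[\varphi_{s,p}]_{C^{0,\alpha_{s,p}}(\mathbb{R}^N)}\ge \sup_{0<|y|\le 1}\frac{|y|^{\beta_{s,p}}}{|y|^{\alpha_{s,p}}}=1,
\]
since $\beta_{s,p}\ge\alpha_{s,p}$ for $p>1$. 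The main work is therefore the Gagliardo seminorm, where I would split the double integral into the interaction of $B_1(0)$ with itself and the interaction of $B_1(0)$ with its complement, and show that as $s\searrow N/p$ each piece is $O((s\,p-N)^{p-1})$; the factor $(sp-N)^{p-1}$ appears from the Jacobian of the map $\tau\mapsto \tau^{\beta_{s,p}}$, since $\beta_{s,p}=(sp-N)/(p-1)\to 0$.

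\textbf{Main obstacle.} The delicate point is clearly the quantitative estimate of $[\varphi_{s,p}]_{W^{s,p}(\mathbb{R}^N)}^p$ as $s\searrow N/p$, because near the boundary $\partial B_1(0)$ the function $\varphi_{s,p}$ flattens out (the exponent $\beta_{s,p}$ tends to $0$, so the ``height'' of the bump vanishes, while the fractional seminorm sees highly non-local interactions). A careful asymptotic analysis of these integrals — presumably the content of Appendix~\ref{app:A}, which the introduction explicitly advertises as containing the relevant lengthy computations for this very purpose — is what produces the sharp exponent $(s\,p-N)^{p-1}$ and confirms the $\limsup$ is finite. Modulo that appendix, the proof reduces to inserting the test function into the definition of $\mathfrak{m}_{s,p}(\mathbb{R}^N)$ and combining with the lower bound above.
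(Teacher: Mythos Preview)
Your proposal is correct and follows essentially the same route as the paper: the lower bound is obtained exactly by combining Corollary~\ref{coro:mistery} with Lemma~\ref{lemma:fastidio}, and the upper bound uses the very same test function $\zeta(x)=(1-|x|^{(s\,p-N)/(p-1)})_+$, with the Gagliardo seminorm estimate supplied by Lemma~\ref{lm:gloria!} in Appendix~\ref{app:A}. The only cosmetic difference is that the paper computes $[\zeta]_{C^{0,\alpha_{s,p}}(\mathbb{R}^N)}=1$ exactly, whereas you only note it is $\ge 1$; either suffices for the $\limsup$ bound.
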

\begin{proof}
The $\liminf$ follows from \eqref{lowerdecay}, by keeping into account Lemma \ref{lemma:fastidio}, as well. 
\par
The $\limsup$ is in principle simpler, it is sufficient to make a clever choice of trial functions. We will use the idea of \cite[Appendix]{BB}, which was concerned with the case $N=1$, $p=2$ and $s\searrow 1/2$. We take the trial function
\[
\zeta(x)=\left(1-|x|^\frac{s\,p-N}{p-1}\right)_+,\qquad \mbox{ for } x\in\mathbb{R}^N,
\]
and observe that $\zeta\in \mathcal{W}^{s,p}(\mathbb{R}^N)$. Indeed, the fact that 
\[
[\zeta]_{W^{s,p}(\mathbb{R}^N)}<+\infty,
\]
follows from Lemma \ref{lm:gloria!}. Moreover, since $\zeta$ identically vanishes outside $B_1(0)$, we have
\[
\begin{split}
[\zeta]_{C^{0,\alpha_{s,p}}(\mathbb{R}^N)}=[\zeta]_{C^{0,\alpha_{s,p}}(\overline{B_1(0)})}=\sup_{x,y\in \overline{B_1(0)}} \frac{\left||x|^\frac{s\,p-N}{p-1}-|y|^\frac{s\,p-N}{p-1}\right|}{|x-y|^{s-\frac{N}{p}}}=1,
\end{split}
\]
thanks to the fact that
\[
\left||x|^\frac{s\,p-N}{p-1}-|y|^\frac{s\,p-N}{p-1}\right|\le \big||x|-|y|\big|^\frac{s\,p-N}{p-1}\qquad \mbox{ and }\qquad |x-y|^{s-\frac{N}{p}}\ge \big||x|-|y|\big|^{s-\frac{N}{p}},
\]
together with the observation that $s-p/N<(sp-N)/(p-1)<1$. Thus, by recalling Theorem \ref{thm:existence}, we get 
\[
\mathfrak{m}_{s,p}(\mathbb{R}^N)\le \frac{[\zeta]^p_{W^{s,p}(\mathbb{R}^N)}}{[\zeta]^p_{C^{0,\alpha_{s,p}}(\mathbb{R}^N)}}=[\zeta]^p_{W^{s,p}(\mathbb{R}^N)}.
\]
If we now apply the estimate of Lemma \ref{lm:gloria!}, we get the desired conclusion.
\end{proof}

\subsection{The case $p\nearrow \infty$}

We first need the following technical result. We still denote by $\Lambda_{s,p}$ the constant defined by \eqref{lambdone}.

\begin{lemma}
\label{lemma:lambda1}
For every $0<s<1\le N$, we have
\[
\lim_{p\to\infty} \Big(\Lambda_{s,p}(B_1(0))\Big)^\frac{1}{p}= 1.
\] 
\end{lemma}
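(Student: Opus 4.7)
The strategy is to prove the two inequalities $\limsup_{p \to \infty} \Lambda_{s,p}(B_1(0))^{1/p} \le 1$ and $\liminf_{p \to \infty} \Lambda_{s,p}(B_1(0))^{1/p} \ge 1$ separately. The heuristic underlying the value $1$ is that, since
\[
\Lambda_{s,p}(B_1(0))^{1/p} = \inf\Big\{[\varphi]_{W^{s,p}(B_1(0))}/\|\varphi\|_{L^p(B_1(0))}\, :\, \varphi\in C^1(\overline{B_1(0)}),\ \varphi(0)=0\Big\},
\]
sending $p\to\infty$ should replace the Sobolev-Slobodecki\u{\i} ratio by the H\"older-to-$L^\infty$ ratio, whose infimum over $\varphi(0)=0$ on $B_1(0)$ equals exactly $1$ by the pointwise bound $|\varphi(x)|\le [\varphi]_{C^{0,s}(\overline{B_1(0)})}\,|x|^s\le [\varphi]_{C^{0,s}(\overline{B_1(0)})}$.

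For the upper bound, I plan to test the infimum with the smooth competitor
\[
\varphi_\varepsilon(x) = (|x|^2+\varepsilon^2)^{s/2} - \varepsilon^s \in C^\infty(\overline{B_1(0)}), \qquad \varphi_\varepsilon(0)=0.
\]
Writing $\varphi_\varepsilon(x)=h(|x|)$, one has $h'(r) = s\,r\,(r^2+\varepsilon^2)^{(s-2)/2}\le s\,r^{s-1}$; integrating and invoking the subadditivity of $t\mapsto t^s$ gives $h(r_2)-h(r_1)\le r_2^s-r_1^s\le (r_2-r_1)^s$ for $0\le r_1\le r_2$, so that $[\varphi_\varepsilon]_{C^{0,s}(\overline{B_1(0)})}\le 1$ while $\|\varphi_\varepsilon\|_{L^\infty(B_1(0))}=(1+\varepsilon^2)^{s/2}-\varepsilon^s$. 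Since $\varphi_\varepsilon$ is Lipschitz, $[\varphi_\varepsilon]_{W^{s,q}(B_1(0))}<\infty$ for every finite $q$, so the identity \eqref{normegagliardo1} applies and gives $\lim_p[\varphi_\varepsilon]_{W^{s,p}(B_1(0))}=[\varphi_\varepsilon]_{C^{0,s}(\overline{B_1(0)})}$; combining with the standard $L^p\to L^\infty$ convergence on the bounded domain $B_1(0)$ one obtains
\[
\limsup_{p\to\infty}\Lambda_{s,p}(B_1(0))^{1/p} \le \frac{[\varphi_\varepsilon]_{C^{0,s}(\overline{B_1(0)})}}{\|\varphi_\varepsilon\|_{L^\infty(B_1(0))}}\le \frac{1}{(1+\varepsilon^2)^{s/2}-\varepsilon^s},
\]
and $\varepsilon\searrow 0$ delivers $\limsup\le 1$.

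For the lower bound, given a diverging sequence $p_n\nearrow\infty$ I would pick near-minimizers $\varphi_n\in C^1(\overline{B_1(0)})$ with $\varphi_n(0)=0$, $\|\varphi_n\|_{L^{p_n}(B_1(0))}=1$ and $[\varphi_n]_{W^{s,p_n}(B_1(0))}\le \Lambda_{s,p_n}(B_1(0))^{1/p_n}+1/n$. The upper bound above ensures that $\{[\varphi_n]_{W^{s,p_n}(B_1(0))}\}$ is uniformly bounded. I then apply Lemma \ref{lemma:embeddinglow} with a fixed $t<s$ and a fixed finite $q$ chosen so that $t\,q>N$ (the constant there stays bounded as $p_n\to\infty$) to get uniform control of $[\varphi_n]_{W^{t,q}(B_1(0))}$; the standard fractional Morrey embedding $W^{t,q}(B_1(0))\hookrightarrow C^{0,t-N/q}(\overline{B_1(0)})$ on the Lipschitz ball, together with $\varphi_n(0)=0$, turns this into a uniform bound in $C^{0,t-N/q}(\overline{B_1(0)})$. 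Arzel\`a--Ascoli then yields a subsequence converging uniformly on $\overline{B_1(0)}$ to some $\varphi\in C(\overline{B_1(0)})$ with $\varphi(0)=0$.

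Applying Lemma \ref{lemma:gamma-conv} along this subsequence gives $\varphi\in C^{0,s}(\overline{B_1(0)})$ with $[\varphi]_{C^{0,s}(\overline{B_1(0)})}\le \liminf_n[\varphi_n]_{W^{s,p_n}(B_1(0))}$. The normalization $1=\|\varphi_n\|_{L^{p_n}(B_1(0))}\le |B_1(0)|^{1/p_n}\,\|\varphi_n\|_{L^\infty(B_1(0))}$ and the uniform convergence force $\|\varphi\|_{L^\infty(B_1(0))}\ge 1$, while $\varphi(0)=0$ and $|x|\le 1$ on $B_1(0)$ yield $\|\varphi\|_{L^\infty(B_1(0))}\le [\varphi]_{C^{0,s}(\overline{B_1(0)})}$; consequently $[\varphi]_{C^{0,s}(\overline{B_1(0)})}\ge 1$, so $\liminf_n\Lambda_{s,p_n}(B_1(0))^{1/p_n}\ge 1$, and arbitrariness of $\{p_n\}$ finishes the proof. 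The main obstacle is the compactness step in the lower bound: the seminorms $[\,\cdot\,]_{W^{s,p_n}(B_1(0))}$ live in spaces that depend on $n$, and passing to an $n$-independent function space requires dropping down to a fixed pair $(t,q)$ with $t\,q>N$ via Lemma \ref{lemma:embeddinglow} before Arzel\`a--Ascoli and the $\Gamma$-liminf inequality of Lemma \ref{lemma:gamma-conv} can be invoked.
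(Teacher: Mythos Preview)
Your proposal is correct and follows essentially the same route as the paper: a smooth radial test function for the $\limsup$, and near-minimizers plus Lemma~\ref{lemma:embeddinglow}, Arzel\`a--Ascoli, and Lemma~\ref{lemma:gamma-conv} for the $\liminf$. The only cosmetic difference is the choice of regularization in the upper bound---you use $(|x|^2+\varepsilon^2)^{s/2}-\varepsilon^s$ while the paper uses $\sqrt{\varepsilon^2+|x|^2}-\varepsilon$---but both yield $[\,\cdot\,]_{C^{0,s}}\le 1$ and drive the limit to $1$ as $\varepsilon\searrow 0$.
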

\begin{proof}  
For every $\varepsilon>0$, we set
\[
f_\varepsilon(x)=\sqrt{\varepsilon^2+|x|^2}-\varepsilon,\qquad \mbox{ for } x\in \mathbb{R}^N.
\]
Then, the function
\[
F_{p,\varepsilon}(x)=\frac{f_\varepsilon(x)}{C_{N,p,\varepsilon}},\qquad \mbox{ with }\qquad C_{N,p,\varepsilon}=\left(N\,\omega_N\,\int_0^1 \big(\varepsilon+\varrho^2\big)^\frac{p}{2}\,\varrho^{N-1}\,d\varrho\right)^\frac{1}{p},
\]
is admissible for  $\Lambda_{s,p}(B_1(0))$, Thus, we get
\[ 
\Big(\Lambda_{s,p}(B_1(0))\Big)^{\frac{1}{p}} \leq [F_{p,\varepsilon}]_{W^{s,p}(B_1(0))}=\frac{1}{C_{N,p,\varepsilon}}\,[f_\varepsilon]_{W^{s,p}(B_1(0))}.
\]
Since $f_\varepsilon$ is a Lipschitz continuous function, by applying  \eqref{normegagliardo1}, we obtain that 
\[
\begin{split}
 \limsup_{p \to \infty}  \Big(\Lambda_{s,p}(B_1(0))\Big)^{\frac{1}{p}}&\leq  \limsup_{p\to \infty} \frac{1}{C_{N,p,\varepsilon}}\,[f_\varepsilon]_{W^{s,p}(B_1(0))}\\
 &=  \limsup_{p\to \infty} [f_\varepsilon]_{W^{s,p}(B_1(0))}= [f_\varepsilon]_{C^{0,s}(\overline{B_1(0))}}.
\end{split}
\]
Let us now take two distinct points $x,y\in \overline{B_1(0)}$, without loss of generality we can suppose that $|x|\ge |y|$. Thus, we get\footnote{We use that for every $t\ge \tau\ge 0$, the function
\[
g(\varepsilon)=\sqrt{\varepsilon^2+t^2}-\sqrt{\varepsilon^2+\tau^2},
\]
is monotone non-increasing. In particular, $g(\varepsilon)\le g(0)$.}
\[
|f_\varepsilon(x)-f_\varepsilon(y)|=\sqrt{\varepsilon^2+|x|^2}-\sqrt{\varepsilon^2+|y|^2}\le |x|-|y|=\Big||x|-|y|\Big|.
\]
In particular, this entails that
\[
\frac{|f_\varepsilon(x)-f_\varepsilon(y)|}{|x-y|^s}\le \frac{\Big||x|-|y|\Big|}{\Big||x|-|y|\Big|^s}=\Big||x|-|y|\Big|^{1-s}\le 1.
\]
The previous inequality implies that 
\begin{equation}
\label{eq:limsupmu}  \limsup_{p \to \infty}  \Big(\Lambda_{s,p}(B_1(0))\Big)^{\frac{1}{p}}\leq[f_\varepsilon]_{C^{0,s}(B_1(0)))}\le 1.
\end{equation}
Then, in order to conclude the proof, it is sufficient to show that 
\[
\liminf_{p \to \infty} \left(\Lambda_{s,p}(B_1)\right)^{\frac{1}{p}}\geq 1.
\]
For every  $p>1$, let  $u_{p} \in C^1(\overline{B_1(0)})$ be  such that 
\begin{equation}
\label{succmin} 
[ u_{p}]_{W^{s,p}(B_1(0))} \leq \Big(\Lambda_{s,p}(B_1(0))\Big)^{\frac{1}{p}}+\frac{1}{p},\qquad  \|u_{p}\|_{L^{p}(B_1(0))}=1, \qquad u_p(0)=0.
\end{equation}
We fix $1<q<\infty$ and $0<t<s$ such that $t\,q>N$, by \eqref{succmin} and Lemma \ref{lemma:embeddinglow} we get for every $p>q$
\begin{equation}
\label{eq:limitsemin}
[ u_{p}]_{W^{t,q}(B_1(0))}\le \left(\frac{N\,\omega_N\,|\Omega|}{s-t}\,\frac{p-q}{p\,q}\right)^\frac{p-q}{p\,q}\,\Big(\mathrm{diam}(\Omega)\Big)^{s-t}\, \left(\Big(\Lambda_{s,p}(B_1(0))\Big)^{\frac{1}{p}} +\frac{1}{p} \right).
 \end{equation}
Moreover, by using H\"older's inequality and the normalization on $u_p$, we have that
\begin{equation}
\label{eq:limitnormep}  
\|u_{p}\|_{L^{q}(B_1(0))}\leq (\omega_N)^{\frac 1 q -\frac 1 p}, \qquad \mbox{ for every  } p>q. 
 \end{equation}
By combining  \eqref{eq:limsupmu}, \eqref{eq:limitsemin} and \eqref{eq:limitnormep},  we get  that the family $\{ u_{p}\}_{p\geq q}$ is bounded in $W^{t,q}(B_1(0))$. 
Since $t\,q>N$, by \cite[Theorem 8.2]{DNPV}, we get that  the family $\{u_p\}_{p\geq q}$ is equicontinuous.  
Moreover, since $u_p(0)=0$, this implies that $\{u_p\}_{p\geq q}$ is equibounded, as well. 
By applying  the  Ascoli-Arzel\`a Theorem, we have that there exists an increasingly diverging sequence $\{p_n\}_{n\in\mathbb{N}}\subseteq (q,+\infty)$ and a continuous function $u_{\infty}$ on $\overline{B_1(0)}$, such that $u_{p_n}$ converges uniformly on $B_1(0)$ to $u_\infty$, as $n$ goes to $\infty$. Of course, we still have
\[
u_\infty(0)=0.
\]
In addition, by the uniform convergence, we have
\[
1=\liminf_{p\to \infty}\|u_{p_n}\|_{L^{p_n}(B_1(0))}\leq \limsup_{n\to \infty} \omega_N^\frac{1}{p_n}\,\|u_{p_n}\|_{L^\infty(B_1(0))}=\|u_{\infty}\|_{L^\infty(B_1(0))}.
\]
Thus, there exists $\overline{x}\in \overline{B_1(0)}$, such that
\[
u_\infty(\overline{x})\ge 1.
\] 
We are left with observing that the sequence   $\{u_{p_n}\}_{n\in\mathbb{N}}$ satisfies  the assumptions of Lemma \ref{lemma:gamma-conv} (in view of  \eqref{succmin} and \eqref{eq:limsupmu}). Thus, we actually have $u_{\infty}\in C^{0,s}(\overline{B_1(0)})$, together with the estimate 
\[
[u_{\infty}]_{C^{0,s}(B_1(0))}\leq \liminf_{n\to \infty }[u_{p_n}]_{W^{s,{p_n}}(B_1(0))} =\liminf_{n\to \infty}\left(\Lambda_{s,p_n}(B_1(0))\right)^{\frac{1}{p_n}}+\frac{1}{p_n}.
\]
On the other hand, by using the properties of $u_\infty$, we have
\[ 
1\leq  \frac{|u_{\infty}(\overline x)|}{|\overline x|^s}\leq  [u_{\infty}]_{C^{0,s}(B_1(0))}.
\]
The last two formulas in display, in conjunction with \eqref{eq:limsupmu}, show that
\[
\lim_{n\to \infty}\Big(\Lambda_{s,{p_n}}(B_1(0))\Big)^{\frac{1}{p_n}}=1.
\]
Since the above limit does not depend on the sequence $\{p_n\}_{n\in\mathbb{N}}$, we get the desired conclusion.
\end{proof}

We can now give the result about the asymptotic behaviour of the sharp constant, as $p$ diverges to $\infty$. This generalizes to the fractional case the asymptotic result of \cite[Corollary 4.3]{BPZ}.
\begin{proposition}
\label{prop:limitinfty}
Let $N\ge 1$ and $0<s<1$, then we have
\[
\lim_{p\to\infty} \Big(\mathfrak{m}_{s,p}(\mathbb{R}^N)\Big)^\frac{1}{p}=1.
\]
\end{proposition}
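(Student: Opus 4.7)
For the lower bound, I plan to combine Theorem \ref{thm:morrey}, which gives $\mathfrak{m}_{s,p}(\mathbb{R}^N)\ge \theta_{N,s,p}\,\Lambda_{s,p}(B_1(0))$, with Lemma \ref{lemma:lambda1}, so that it suffices to prove $\liminf_{p\to\infty}\theta_{N,s,p}^{1/p}\ge 1$. To this end, I test the supremum defining $\theta_{N,s,p}$ at $T_p=1+1/p$, obtaining
$$
\theta_{N,s,p}^{1/p}\ge \omega_N^{1/p}\,p^{-N/p}\,\frac{1}{p^{-s}+(1+1/p)^s}.
$$
Each of the three factors tends to $1$ as $p\to\infty$: the second because $(N\log p)/p\to 0$, the third because $p^{-s}\to 0$ while $(1+1/p)^s\to 1$. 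This yields $\liminf_{p\to\infty}\mathfrak{m}_{s,p}(\mathbb{R}^N)^{1/p}\ge 1$.

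For the upper bound, I use a fixed nontrivial Lipschitz trial function of compact support, e.g.\ $\psi(x)=(1-|x|)_+$. A direct splitting of the seminorm integral shows $\psi\in \mathcal{W}^{s,p}(\mathbb{R}^N)$ and $[\psi]_{W^{s,q}(\mathbb{R}^N)}<+\infty$ for every $q\ge 1$, while plainly $[\psi]_{C^{0,s}(\mathbb{R}^N)}>0$. By Theorem \ref{thm:existence} we have
$$
\mathfrak{m}_{s,p}(\mathbb{R}^N)^{1/p}\le \frac{[\psi]_{W^{s,p}(\mathbb{R}^N)}}{[\psi]_{C^{0,\alpha_{s,p}}(\mathbb{R}^N)}}.
$$
Since $\psi$ is bounded and $s$-H\"older, Remark \ref{below} (applied with $\alpha_{s,p}\nearrow s$) guarantees that the denominator converges to $[\psi]_{C^{0,s}(\mathbb{R}^N)}$ as $p\to\infty$. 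For the numerator, I exploit the pointwise interpolation
$$
|\psi(x)-\psi(y)|^p\le \bigl([\psi]_{C^{0,s}(\mathbb{R}^N)}\bigr)^{p-q}\,|x-y|^{s(p-q)}\,|\psi(x)-\psi(y)|^q,\qquad p\ge q\ge 1,
$$
which after dividing by $|x-y|^{N+s\,p}$ and integrating over $\mathbb{R}^N\times \mathbb{R}^N$ gives
$$
[\psi]_{W^{s,p}(\mathbb{R}^N)}\le \bigl([\psi]_{C^{0,s}(\mathbb{R}^N)}\bigr)^{1-q/p}\,[\psi]_{W^{s,q}(\mathbb{R}^N)}^{q/p}.
$$
Fixing any $q\ge 1$ and letting $p\to\infty$ yields $\limsup_{p\to\infty}[\psi]_{W^{s,p}(\mathbb{R}^N)}\le [\psi]_{C^{0,s}(\mathbb{R}^N)}$, and combining with the limit of the denominator gives $\limsup_{p\to\infty}\mathfrak{m}_{s,p}(\mathbb{R}^N)^{1/p}\le 1$.

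The two inequalities together give the claim. The most delicate step is the choice $T_p=1+1/p$ in the lower bound for $\theta_{N,s,p}$: the polynomially small $\delta_p=T_p-1$ is exactly what balances the factor $(T_p-1)^N$, which after taking the $1/p$-th root contributes only a harmless $p^{-N/p}\to 1$, against the exponentially-growing denominator $\bigl((T_p-1)^s+T_p^s\bigr)^p$. The upper bound is cleaner but relies crucially on the compact support and Lipschitz regularity of $\psi$ to bypass the absence of a direct $W^{s,p}\hookrightarrow C^{0,s}$ embedding on the whole space, through the interpolation argument above.
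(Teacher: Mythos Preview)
Your proof is correct and follows essentially the same strategy as the paper: the lower bound via Theorem \ref{thm:morrey} and Lemma \ref{lemma:lambda1}, and the upper bound via a compactly supported Lipschitz test function together with the interpolation estimate behind \eqref{normegagliardo1}. The only (minor) differences are cosmetic: you take a $p$-dependent $T_p=1+1/p$ where the paper fixes $T>1$ and lets $T\to 1^+$ after passing to the limit in $p$; and for the upper bound you use the single test function $\psi(x)=(1-|x|)_+$ together with Remark \ref{below} to handle the denominator, whereas the paper uses the family $\widetilde f_\varepsilon$, bounds the H\"older seminorm below by a single difference quotient, and sends $\varepsilon\to 0$ at the end --- your route is arguably slightly cleaner in that it avoids this extra limit.
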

\begin{proof}
From Theorem \ref{thm:morrey}, we know that for every $p>N/s$ and every $T>1$, we have
\[
\left(\omega_N^\frac{1}{p}\,\frac{(T-1)^\frac{N}{p}}{(T-1)^s+T^s}\right)\,\Big(\Lambda_{s,p}(B_1(0))\Big)^\frac{1}{p}\le \Big(\mathfrak{m}_{s,p}(\mathbb{R}^N)\Big)^\frac{1}{p}.
\]
Thus, from Lemma \ref{lemma:lambda1} we get
\[
\liminf_{p\to\infty} \Big(\mathfrak{m}_{s,p}(\mathbb{R}^N)\Big)^\frac{1}{p}\ge \frac{1}{(T-1)^s+T^s}\,\lim_{p\to\infty}\Big(\Lambda_{s,p}(B_1(0))\Big)^\frac{1}{p}=\frac{1}{(T-1)^s+T^s},
\]
for every $T>1$. In particular, by arbitrariness of $T$ we get
\[
\liminf_{p\to\infty} \Big(\mathfrak{m}_{s,p}(\mathbb{R}^N)\Big)^\frac{1}{p}\ge 1.
\]
In order to prove the $\limsup$ inequality, we take the function
\[
\widetilde{f}_\varepsilon(x)=(1-f_\varepsilon(x))_+,\qquad \mbox{ where } f_\varepsilon(x)=\sqrt{\varepsilon^2+|x|^2}-\varepsilon.
\]
This is a compactly supported $1-$Lipschitz function, thus we have that $\widetilde{f}_\varepsilon\in \mathcal{W}^{s,p}(\mathbb{R}^N)$. This implies that
\[
\Big(\mathfrak{m}_{s,p}(\mathbb{R}^N)\Big)^\frac{1}{p}\le \frac{[\widetilde{f}_\varepsilon]_{W^{s,p}(\mathbb{R}^N)}}{[\widetilde{f}_\varepsilon]_{C^{0,\alpha_{s,p}}(\mathbb{R}^N)}}\le\frac{[\widetilde{f}_\varepsilon]_{W^{s,p}(\mathbb{R}^N)}}{|\widetilde{f}_\varepsilon(\mathbf{e}_1)-\widetilde{f}_\varepsilon(0)|}= \frac{[\widetilde{f}_\varepsilon]_{W^{s,p}(\mathbb{R}^N)}}{\sqrt{\varepsilon^2+1}-\varepsilon}.
\]
By taking the limit as $p$ goes to $\infty$ and using \eqref{normegagliardo1}, we get
\[
\limsup_{p\to\infty}\Big(\mathfrak{m}_{s,p}(\mathbb{R}^N)\Big)^\frac{1}{p}\le \frac{[\widetilde{f}_\varepsilon]_{C^{0,s}(\mathbb{R}^N)}}{{\sqrt{\varepsilon^2+1}-\varepsilon}}=\frac{[\widetilde{f}_\varepsilon]_{C^{0,s}(\overline{B_1(0)})}}{{\sqrt{\varepsilon^2+1}-\varepsilon}}=\frac{[f_\varepsilon]_{C^{0,s}(\overline{B_1(0)})}}{{\sqrt{\varepsilon^2+1}-\varepsilon}}.
\] 
In the proof of Lemma \ref{lemma:lambda1}, we have already shown that the last seminorm is smaller than or equal to $1$. This concludes the proof, by arbitrariness of $\varepsilon>0$.
\end{proof}

\subsection{The case $s\nearrow 1$}

We start by recalling that
\[
\begin{split}
\mathfrak{m}_{1,p}(\mathbb{R}^N)
&=\min_{\varphi\in  \mathcal{W}^{1,p}(\mathbb{R}^N)} \left\{\int_{\mathbb{R}^N} |\nabla \varphi|^p\,dx\, :\, [\varphi]_{C^{0,\alpha_{1,p}}(\mathbb{R}^N)}=1 \right\}\\
&=\min_{\varphi\in D^{1,p}(\mathbb{R}^N)} \left\{\int_{\mathbb{R}^N} |\nabla \varphi|^p\,dx\, :\, [\varphi]_{C^{0,\alpha_{1,p}}(\mathbb{R}^N)}=1 \right\},
\end{split}
\]
thanks to Remark \ref{rem:HS}.
The latter is exactly the problem studied in \cite{HS}.
We also define
\[
K_{p,N}=\frac{1}{p}\,\int_{\mathbb{S}^{N-1}} |\langle \omega,\mathbf{e}_1\rangle|\,d\mathcal{H}^{N-1}(\omega).
\]
\begin{theorem}  
\label{thm:limits1}
Let $1\le N<p<\infty$. Then 
\begin{equation}
\label{eq:sasym}
\lim_{s\nearrow 1}(1-s)\, \mathfrak{m}_{s,p}(\mathbb{R}^N)= K_{p,N}\, \mathfrak{m}_{1,p}(\mathbb{R}^N).
\end{equation}
Moroever, if for every $N/p<s<1$ we define $u_{s}\in \mathcal{W}^{s,p}(\mathbb{R}^N)$ to be the unique extremal for $\mathfrak{m}_{s,p}(\mathbb{R}^N)$ such that
\[
u_{s}(y_0)=0,\qquad u_{s}(x_0)=1,\qquad [u_{s}]_{C^{0,\alpha_{s,p}}(\mathbb{R}^N)}=\frac{u_s(x_0)-u_s(y_0)}{|x_0-y_0|^{\alpha_{s,p}}}=1,
\]
then, as $s$ goes to $1$, we get that  $\{u_{s}\}_{N/p<s<1}$ uniformly converges on compact sets to the unique extremal $u\in \mathcal{W}^{1,p}(\mathbb{R}^N)$ for $\mathfrak{m}_{1,p}(\mathbb{R}^N)$ 
satisfying 
\[
u(y_0)=0,\qquad u(x_0)=1,\qquad [u]_{C^{0,\alpha_{1,p}}(\mathbb{R}^N)}=\frac{u_s(x_0)-u_s(y_0)}{|x_0-y_0|^{\alpha_{1,p}}}=1.
\]
\end{theorem}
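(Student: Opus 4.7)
The plan is to establish \eqref{eq:sasym} by matching $\limsup$ and $\liminf$ bounds on $(1-s)\,\mathfrak{m}_{s,p}(\mathbb{R}^N)$, and to simultaneously build the compactness needed to pass to the limit on the extremals.

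For the $\limsup$ bound, for every $\varepsilon>0$ I pick $\varphi_\varepsilon\in C^\infty_0(\mathbb{R}^N)$ with $[\varphi_\varepsilon]_{C^{0,\alpha_{1,p}}(\mathbb{R}^N)}=1$ and $[\varphi_\varepsilon]_{W^{1,p}(\mathbb{R}^N)}^p\le \mathfrak{m}_{1,p}(\mathbb{R}^N)+\varepsilon$. Since $\varphi_\varepsilon\in C^\infty_0(\mathbb{R}^N)\subseteq \mathcal{W}^{s,p}(\mathbb{R}^N)$, by Theorem \ref{thm:existence},
\[
(1-s)\,\mathfrak{m}_{s,p}(\mathbb{R}^N)\le \frac{(1-s)\,[\varphi_\varepsilon]_{W^{s,p}(\mathbb{R}^N)}^p}{[\varphi_\varepsilon]_{C^{0,\alpha_{s,p}}(\mathbb{R}^N)}^p}.
\]
The Bourgain-Brezis-Mironescu formula \cite[Corollary 3.20]{EE} controls the numerator, while Remark \ref{below} (applicable since $\varphi_\varepsilon$ is Lipschitz and bounded, and $\alpha_{s,p}\nearrow \alpha_{1,p}$) handles the denominator. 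Sending $s\nearrow 1$ and then $\varepsilon\searrow 0$ yields the $\limsup$ bound.

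For the $\liminf$ and the compactness, Theorem \ref{thm:bound} gives $0=u_s(y_0)\le u_s\le u_s(x_0)=1$ on $\mathbb{R}^N$. Combined with $[u_s]_{C^{0,\alpha_{s,p}}(\mathbb{R}^N)}=1$, the interpolation \eqref{needed} yields for every $0<\beta<\alpha_{s,p}$
\[
[u_s]_{C^{0,\beta}(\mathbb{R}^N)}\le 2^{1-\beta/\alpha_{s,p}}\le 2,
\]
so, for any fixed $0<\beta<\alpha_{1,p}$, the family $\{u_s\}$ is uniformly bounded and equicontinuous for $s$ close to $1$. Arzel\`a-Ascoli together with a diagonal extraction over an exhaustion of balls produces a subsequence $s_n\nearrow 1$ and a continuous limit $u$ with $u_{s_n}\to u$ locally uniformly on $\mathbb{R}^N$, $u(x_0)=1$, $u(y_0)=0$. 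For any distinct $x,y$,
\[
\frac{|u(x)-u(y)|}{|x-y|^{\alpha_{1,p}}}=\lim_{n\to\infty}\frac{|u_{s_n}(x)-u_{s_n}(y)|}{|x-y|^{\alpha_{s_n,p}}}\,|x-y|^{\alpha_{s_n,p}-\alpha_{1,p}}\le \lim_{n\to\infty}|x-y|^{\alpha_{s_n,p}-\alpha_{1,p}}=1,
\]
hence $[u]_{C^{0,\alpha_{1,p}}(\mathbb{R}^N)}\le 1$, while $[u]_{C^{0,\alpha_{1,p}}(\mathbb{R}^N)}\ge |u(x_0)-u(y_0)|/|x_0-y_0|^{\alpha_{1,p}}=1$ (using $|x_0-y_0|=1$, forced by the normalization). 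Thus $u$ is admissible for $\mathfrak{m}_{1,p}(\mathbb{R}^N)$. A $\Gamma$-liminf Bourgain-Brezis-Mironescu inequality applied on each ball $B_R(0)$ (in the spirit of \cite{ponce} and \cite{ADM, BPS, DR}, where local $L^p$ convergence follows from local uniform convergence and the uniform $L^\infty$ bound), followed by letting $R\to \infty$ via monotone convergence, gives $u\in \mathcal{W}^{1,p}(\mathbb{R}^N)$ and
\[
K_{N,p}\,[u]_{W^{1,p}(\mathbb{R}^N)}^p\le \liminf_{n\to\infty}(1-s_n)\,[u_{s_n}]_{W^{s_n,p}(\mathbb{R}^N)}^p=\liminf_{n\to\infty}(1-s_n)\,\mathfrak{m}_{s_n,p}(\mathbb{R}^N).
\]
Combining with $[u]_{W^{1,p}(\mathbb{R}^N)}^p\ge \mathfrak{m}_{1,p}(\mathbb{R}^N)$ closes the $\liminf$ and establishes \eqref{eq:sasym}.

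Finally, the matching bounds force the chain of inequalities to be saturated, so $u$ is an extremal for $\mathfrak{m}_{1,p}(\mathbb{R}^N)$ with the prescribed normalization; the uniqueness corollary following Theorem \ref{thm:extremal_equiv} identifies $u$ with the extremal in the statement, so a standard subsequence argument promotes subsequential convergence to convergence of the full family $\{u_s\}_{N/p<s<1}$. The main obstacle I foresee is the $\Gamma$-liminf Bourgain-Brezis-Mironescu step: the extremals $u_{s_n}$ live in the homogeneous space $\mathcal{W}^{s_n,p}(\mathbb{R}^N)$ and need not be in $L^p(\mathbb{R}^N)$, so the result must be applied locally on balls and then globalized, carefully controlling the tails of the fractional seminorms by means of the uniform $L^\infty$ bound on the $u_{s_n}$.
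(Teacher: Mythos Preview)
Your proposal is correct and follows essentially the same approach as the paper: the $\limsup$ via a fixed $C^\infty_0$ competitor combined with the BBM formula and Remark \ref{below}, compactness of $\{u_s\}$ via Theorem \ref{thm:bound} and \eqref{needed} plus Arzel\`a--Ascoli, the $\liminf$ via Ponce's local $\Gamma$-liminf on balls followed by monotone convergence, and the uniqueness corollary to upgrade subsequential to full convergence. The only cosmetic difference is that the paper additionally invokes Lemma \ref{lm:prin} (a finite-difference estimate) to verify $\nabla u\in L^p(\mathbb{R}^N)$ independently before citing \cite{ponce}, whereas you extract this directly from the local $\liminf$; also note that your worry about tails is unfounded, since for the $\liminf$ one simply uses $[u_{s_n}]_{W^{s_n,p}(B_R)}\le [u_{s_n}]_{W^{s_n,p}(\mathbb{R}^N)}$ and no tail control is needed.
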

\begin{proof} 
Thanks to \cite[Corollary 4.3]{BPZ}, we have $\mathfrak{m}_{1,p}(\mathbb{R}^N)=\mathfrak{m}_{1,p}(B_1)$. Then, for every $\varepsilon>0$,    there exists   $u_{\varepsilon}\in C_0^{\infty}(B_1(0))$ such that 
\[
\mathfrak{m}_{1,p}(\mathbb{R}^N)+\varepsilon\geq \int_{B_1(0)} |\nabla u_{\varepsilon}|^p\,dx  \qquad \mbox{ and } \qquad  [u_{\varepsilon}]_{C^{0,\alpha_{1,p}}(\overline{B_1(0)})}=1. 
\]
Since $C^{0,\alpha_{1,p}}(\overline{B_1(0)})\subseteq C^{0,\alpha_{s,p}}(\overline{B_1(0)})$, by Remark \ref{below} and \cite[Corollary 3.20]{EE}, we get 
\[
\begin{split}
\limsup_{s\nearrow 1 }(1-s)\, \mathfrak{m}_{s,p}(\mathbb{R}^N)&\leq \limsup_{s\nearrow 1 }\frac{(1-s)\,[u_{\varepsilon}]^p _{W^{s,p}(\mathbb{R}^N)}   }{[u_{\varepsilon}]^p_{C^{0,\alpha_{s,p}}(\overline{B_1(0)})}}\\
&=\frac{K_{p,N}\,\|\nabla u_{\varepsilon}\|^p_{L^p(B_1)} }{ [u_{\varepsilon}]^p_{C^{0,\alpha_{1,p}}(\overline{B_1(0)})}}\leq K_{p,N}\,\Big(\mathfrak{m}_p(\mathbb{R}^N)+\varepsilon\Big).
\end{split}
\]
By arbitrariness of $\varepsilon>0$, the latter gives that
\begin{equation}
\label{limsupsto1}
 \limsup_{s\nearrow 1 }(1-s)\, \mathfrak{m}_{s,p}(\mathbb{R}^N)\leq K_{p,N}\,\mathfrak{m}_p(\mathbb{R}^N).
\end{equation}
In order to prove the $\liminf$ inequality, for every  $0<s<1$ such that $s\,p>N$, we take $u_{s}$  as in the statement.
By applying Theorem \ref{thm:bound}, we have that
\[
\|u_{s}\|_{L^\infty(\mathbb{R}^N)}=u_s(x_0)=1,\qquad \mbox{ for every } s>\frac{N}{p}.
\]
We now fix $N/p<\overline{s}<1$ and set $\alpha=\overline{s}-{N/p}$. We thus get that $\{u_{s}\}_{\overline{s}<s<1}\subseteq C^{0,\alpha}(\mathbb{R}^N)$ and 
\[
\|u_s\|_{C^{0,\alpha}(\mathbb{R}^N)}=\|u_s\|_{L^\infty(\mathbb{R}^N)}+[u_s]_{C^{0,\alpha}(\mathbb{R}^N)}\le C,\qquad \mbox{ for every } \overline{s}<s<1,
\] 
for a uniform constant $C>0$, thanks to the uniform $L^\infty$ bound and to \eqref{needed}. Hence this family is equibounded and equicontinuous. We now take an increasing sequence $\{s_k\}_{k\in\mathbb{N}}\subseteq(\overline{s},1)$ converging to $1$, such that
\[
\liminf_{s\nearrow 1}(1-s)\,\mathfrak{m}_{s,p}(\mathbb{R}^N)=\lim_{k\to\infty} (1-s_k)\,\mathfrak{m}_{s_k,p}(\mathbb{R}^N).
\]
By using the Ascoli-Arzel\`a Theorem and a standard diagonal argument, we can infer existence of a subsequence $\{s_{k_n}\}_{n\in\mathbb{N}}\subseteq \{s_k\}_{k\in\mathbb{N}}$, such that the sequence $\{u_{s_{k_n}}\}_{n\in\mathbb{N}}$ uniformly converges on compact sets to a function $u\in L^\infty(\mathbb{R}^N)$, as $n$ goes to $\infty$. 
\par
We claim that $u\in\mathcal{W}^{1,p}(\mathbb{R}^N)$. Let $x,\,y\in\mathbb{R}^N$ with $x\neq y$, then we have
\[
\frac{|u(x)-u(y)|}{|x-y|^{\alpha_{1,p}}}=\lim_{n\to \infty} \frac{|u_{s_{k_n}}(x)-u_{s_{k_n}}(y)|}{|x-y|^{\alpha_{s_{k_n},p}}}\leq 1, 
\]
which implies that $u\in \dot C^{0,\alpha_{1,p}}(\mathbb{R}^N)$. In addition, we have
\[
[u]_{C^{0,\alpha_{1,p}}(\mathbb{R}^N)} \geq \frac{|u(x_0)-u(y_0)|}{|x_0-y_0|^{\alpha_{1,p}}}=\lim_{n\to\infty}\frac{|u_{s_{k_n}}(x_0)-u_{s_{k_n}}(y_0)|}{|x_0-y_0|^{\alpha_{s_{k_n},p}}} =1,
\]
so that
\[
[u]_{C^{0,\alpha_{1,p}}(\mathbb{R}^N)}=1.
\] 
We still need to prove that $\nabla u\in L^p(\mathbb{R}^N)$.
By appealing Lemma \ref{lm:prin}, there exists a constant $C=C(N)>0$ such that
\[
\sup_{|h|>0}\int_{\mathbb{R}^N} \frac{|u_s(x+h)-u_s(x)|^p}{|h|^{s\,p}}\,dx\le C\,(1-s)\,[u_s]^p_{W^{s,p}(\mathbb{R}^N)}= C\,(1-s)\,\mathfrak{m}_{s,p}(\mathbb{R}^N).
\]
In particular, by Fatou's Lemma we get for every $|h|>0$
\[
\int_{\mathbb{R}^N} \frac{|u(x+h)-u(x)|^p}{|h|^{p}}\,dx\le \liminf_{n\to\infty}\int_{\mathbb{R}^N} \frac{|u_{s_{k_n}}(x+h)-u_{s_{k_n}}(x)|^p}{|h|^{s_{k_n}\,p}}\,dx\le \liminf_{s\nearrow 1}C\,(1-s)\,\mathfrak{m}_{s,p}(\mathbb{R}^N).
\]
By the classical characterization of Sobolev spaces in terms of finite differences, the last estimate, the arbitariness of $h$ and \eqref{limsupsto1} show that $\nabla u\in L^p(\mathbb{R}^N)$. Moreover,  for every $R>0$, thanks to  \cite[Theorem 8]{ponce} and to \eqref{limsupsto1},  we have that 
\[
 K_{p,N}\, \|\nabla u\| ^p_{L^p(B_R)} \leq \liminf_{n\to \infty}(1-s_{k_n})\, [u_{s_{k_n}}]^p_{W^{s_{k_n},p}(B_R)}\leq  \liminf_{s\nearrow 1}(1-s)\,\mathfrak{m}_{s,p}(\mathbb{R}^N)\le K_{p,N}\,\mathfrak{m}_p(\mathbb{R}^N).
\]
By the Monotone Convergence Theorem, the previous estimate holds by replacing the $L^p$ norm on $B_R$ with that on $\mathbb{R}^N$, on the first term in the left-hand side. By definition of $\mathfrak{m}_p(\mathbb{R}^N)$, this yields
\begin{equation}
\label{liminf est of msp}
\begin{split}
	K_{p,N}\, \mathfrak{m}_{1,p}(\mathbb{R}^N)&\leq K_{p,N}\, \|\nabla u\|^p_{L^p(\mathbb{R}^N)}\\
	 &\leq \liminf_{s\nearrow 1}(1-s)\,\mathfrak{m}_{s,p}(\mathbb{R}^N)\le K_{p,N}\,\mathfrak{m}_p(\mathbb{R}^N).
	\end{split}
\end{equation}
Combining \eqref{limsupsto1} and \eqref{liminf est of msp}, we get \eqref{eq:sasym} and that $u$ is an extremal for $\mathfrak{m}_p(\mathbb{R}^N)$, having the claimed normalization properties.  \par
Finally, we observe that by \cite[Corollary 3.2]{HS} such a minimizer is unique. Hence, we get convergence of the whole family $\{u_{s}\}_{N/p<s<1}$ to $u$, uniformly on compact sets.
\end{proof}

\appendix

\section{A family of test functions}
\label{app:A}
For every $N\ge 1$, $0<s<1$ and $1<p<\infty$ such that $s\,p>N$, we consider the function
\[
\zeta(x)=\left(1-|x|^\frac{s\,p-N}{p-1}\right)_+,\qquad \mbox{ for } x\in\mathbb{R}^N.
\]
With the aim to estimate the   Sobolev-Slobodecki\v{\i} seminorm $
[\zeta]^p_{W^{s,p}(\mathbb{R}^N)}$, we prove the following preliminary lemmas.
The first one is an extension of \cite[Lemma B.1]{BB}, the latter being concerned with the case $p=2$ only.
\begin{lemma}[One dimension]
Let $N=1$, $0<s<1$ and $1<p<\infty$ such that $s\,p>1$. There exists a constant $C=C(p)>0$  such that    
\begin{equation}
\label{estimate1d}
\begin{split}
[\zeta]^p_{W^{s,p}(\mathbb{R})}&\le  \frac{C}{s\,p-1}   \left(  \int_0^\frac{1}{2}\left|1-t^\frac{s\,p-1}{p-1}\right|^p\,\left(1+t^\frac{1-s\,p}{p-1}\right) \,dt +  \frac{1}{1-s}  \left(\frac{s\,p-1}{p-1}\right)^p\right)\\
&+\frac{4}{s\,p}\,\int_{-1}^1 \frac{\left(1-|t|^\frac{s\,p-1}{p-1}\right)^p}{(1-t)^{s\,p}}\,dt.
\end{split}
\end{equation}
The constant $C$ blows-up as $p\searrow 1$.
\end{lemma}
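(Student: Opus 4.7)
The plan is to split $[\zeta]^p_{W^{s,p}(\mathbb{R})}$ according to whether both variables lie in $[-1,1]$ (the support of $\zeta$) or not, producing an ``inside--inside'' part $I_{\rm in}$ and, by symmetry in swapping $x$ and $y$, twice an ``outside--inside'' part. Since $\zeta$ vanishes outside $[-1,1]$, the outside part is
\[
2\int_{-1}^{1}(1-|x|^\alpha)^p\int_{\mathbb{R}\setminus[-1,1]}\frac{dy}{|x-y|^{1+sp}}\,dx=\frac{2}{sp}\int_{-1}^{1}(1-|x|^\alpha)^p\!\left(\frac{1}{(1-x)^{sp}}+\frac{1}{(1+x)^{sp}}\right)dx,
\]
where $\alpha=(sp-1)/(p-1)$. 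After the reflection $x\mapsto -x$ in one of the two summands, this produces exactly the last term $\frac{4}{sp}\int_{-1}^{1}(1-|t|^\alpha)^p(1-t)^{-sp}\,dt$ of the claimed bound.

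Next, I would treat $I_{\rm in}=\iint_{[-1,1]^2}\frac{||x|^\alpha-|y|^\alpha|^p}{|x-y|^{1+sp}}\,dx\,dy$. Splitting by sign-quadrants and using the symmetry $(x,y)\mapsto(-x,-y)$ reduces this to $I_{\rm in}=2J_{1}+2J_{2}$, with
\[
J_{1}=\iint_{[0,1]^2}\frac{|x^\alpha-y^\alpha|^p}{|x-y|^{1+sp}}\,dx\,dy,\qquad J_{2}=\iint_{[0,1]^2}\frac{|x^\alpha-y^\alpha|^p}{(x+y)^{1+sp}}\,dx\,dy.
\]
Since $x+y\ge|x-y|$, we have $J_{2}\le J_{1}$, so $I_{\rm in}\le 4J_{1}$. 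Restricting $J_{1}$ to $y\le x$ by symmetry and using the homogeneous scaling $y=tx$ (so that $x^\alpha-y^\alpha=x^\alpha(1-t^\alpha)$ and $x-y=x(1-t)$) gives
\[
J_{1}=2\int_{0}^{1}x^{\alpha p-sp}\,dx\int_{0}^{1}\frac{(1-t^\alpha)^p}{(1-t)^{1+sp}}\,dt=\frac{2(p-1)}{sp-1}\,K,
\]
where I used $\alpha p-sp=-p(1-s)/(p-1)$, so $\int_{0}^{1}x^{\alpha p-sp}dx=(p-1)/(sp-1)$, and set $K=\int_{0}^{1}(1-t^\alpha)^p(1-t)^{-1-sp}\,dt$.

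The heart of the argument is the control of $K$. I would split $K=K_{1}+K_{2}$ at $t=1/2$: on $[0,1/2]$ the factor $(1-t)^{-1-sp}$ is bounded by $2^{1+p}$, giving $K_{1}\le 2^{1+p}\int_{0}^{1/2}(1-t^\alpha)^p\,dt$. For the singular piece, the change $u=1-t$ yields $K_{2}=\int_{0}^{1/2}(1-(1-u)^\alpha)^p u^{-1-sp}\,du$, and the elementary mean-value bound
\[
1-(1-u)^\alpha=\alpha\!\int_{0}^{u}(1-r)^{\alpha-1}\,dr\le \alpha\, u\,(1-u)^{\alpha-1}\le \alpha\,2^{1-\alpha}\,u\qquad\text{for }u\in[0,1/2],
\]
gives $K_{2}\le(\alpha 2^{1-\alpha})^p\int_{0}^{1/2}u^{p-1-sp}\,du=\frac{C(p)\,\alpha^p}{1-s}$, where $C(p)$ remains bounded as long as $p>1$ is fixed. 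Assembling, $K\le C(p)\int_{0}^{1/2}(1-t^\alpha)^p\,dt+\frac{C(p)}{1-s}\bigl(\tfrac{sp-1}{p-1}\bigr)^p$; multiplying by $4\cdot\frac{2(p-1)}{sp-1}$ (for $I_{\rm in}\le 4J_{1}$) and using the trivial upper bound $(1-t^\alpha)^p\le(1-t^\alpha)^p(1+t^{-\alpha})$ to match the stated form concludes the estimate for the inside contribution. Adding back the outside piece gives \eqref{estimate1d}.

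The step I expect to be most delicate is the singular piece $K_{2}$: the exponent $\alpha=(sp-1)/(p-1)$ lies in $(0,1)$, so $(1-u)^\alpha$ is only H\"older continuous at $u=1$ and a global Lipschitz bound is unavailable; restricting to $u\in[0,1/2]$ is essential to keep $(1-u)^{\alpha-1}\le 2^{1-\alpha}$ uniformly, and it is this uniform bound that isolates the sharp $\alpha^p/(1-s)$ decay rate responsible for the second summand inside the large parenthesis. The fact that $C(p)$ blows up as $p\searrow 1$ is built into this step through the integral $\int_{0}^{1/2}u^{p-1-sp}du=\frac{1}{p(1-s)}(1/2)^{p(1-s)}$, consistent with the statement.
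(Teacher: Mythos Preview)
Your proof is correct and follows essentially the same route as the paper's: the same inside/outside decomposition, the same quadrant reduction $I_{\rm in}\le 4J_1$, the same scaling $y=tx$ to reduce $J_1$ to a one-dimensional integral with prefactor $(p-1)/(sp-1)$, and the same split of that integral at $t=1/2$ with the concavity bound $1-t^{\alpha}\le\alpha\,t^{\alpha-1}(1-t)$ controlling the singular piece near $t=1$.

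The only difference is that the paper cites an external formula (from \cite{BBZ}) for the scaling step, which produces the factor $(1+t^{(1-sp)/(p-1)})$ directly, whereas you derive the scaling by hand and obtain the slightly cleaner integral $K=\int_0^1(1-t^\alpha)^p(1-t)^{-1-sp}\,dt$, then insert the factor $(1+t^{-\alpha})$ trivially at the end to match the stated form; this makes your argument more self-contained. One cosmetic remark: your closing sentence locating the blow-up of $C(p)$ in the integral $\int_0^{1/2}u^{p-1-sp}\,du=\frac{1}{p(1-s)}(1/2)^{p(1-s)}$ is not quite right, since that quantity stays bounded as $p\searrow 1$; in fact your explicit constants do not blow up, which is stronger than what the lemma asserts and in no way harms the proof.
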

\begin{proof}
This can be proved as in \cite[Lemma B.1]{BB}, with minor modifications. Since $\zeta$ identically vanishes outside $(-1,1)$, we can write
\[
[\zeta]_{W^{s,p}(\mathbb{R})}^p=[\zeta]_{W^{s,p}((-1,1))}^p+2\,\iint_{(-1,1)\times(\mathbb{R}\setminus (-1,1))} \frac{|\zeta(x)|^p}{|x-y|^{1+s\,p}}\,dx\,dy.
\]
For $x\in (-1,1)$, we have
\[
\int_{\mathbb{R}\setminus(-1,1)} \frac{dy}{|x-y|^{1+s\,p}}=\frac{1}{s\,p}\,\frac{1}{(1-x)^{s\,p}}+\frac{1}{s\,p}\,\frac{1}{(1+x)^{s\,p}}.
\]
By using this fact, a change of variable and the definition of $\zeta$, we thus get
\[
[\zeta]_{W^{s,p}(\mathbb{R})}^p=[\zeta]_{W^{s,p}((-1,1)}^p+\frac{4}{s\,p}\,\int_{-1}^1 \frac{\left(1-|t|^\frac{s\,p-1}{p-1}\right)^p}{(1-t)^{s\,p}}\,dt.
\]
By using the symmetry of the set and of the integrand, we can estimate
\[
[\zeta]^p_{W^{s,p}((-1,1)}\ \le 4\,\iint_{(0,1)\times (0,1)} \frac{\Big||x|^\frac{s\,p-1}{p-1}-|y|^\frac{s\,p-1}{p-1}\Big|^p}{|x-y|^{1+s\,p}}\,dx\,dy=:\mathcal{I}.
\]
We apply \cite[Remark 4.2, formula (4.3)]{BBZ} with the choice $\beta=(s\,p-1)/(p-1)$ there, so the term $\mathcal{I}$ can be estimated as follows
\[
\mathcal{I}\le 4\,\iint_{(0,1)\times (0,1)} \frac{\Big||x|^\frac{s\,p-1}{p-1}-|y|^\frac{s\,p-1}{p-1}\Big|^p}{|x-y|^{1+s\,p}}\,dx\,dy\le \frac{4\,(p-1)}{s\,p-1}\,\int_0^1 \frac{\left|1-t^\frac{s\,p-1}{p-1}\right|^p}{|1-t|^{1+s\,p}}\,\left(1+t^\frac{1-s\,p}{p-1}\right)\,dt.
\]
In order to estimate the last integral, we break it into two pieces
\[
\begin{split}
\int_0^1 \frac{\left|1-t^\frac{s\,p-1}{p-1}\right|^p}{|1-t|^{1+s\,p}}\,\left(1+t^\frac{1-s\,p}{p-1}\right)\,dt&\le 2^{1+s\,p}\,\int_0^\frac{1}{2} \left|1-t^\frac{s\,p-1}{p-1}\right|^p\,\left(1+t^\frac{1-s\,p}{p-1}\right)\,dt\\
&+\left(1+2^\frac{s\,p-1}{p-1}\right)\,\int_\frac{1}{2}^1 \frac{\left|1-t^\frac{s\,p-1}{p-1}\right|^p}{|1-t|^{1+s\,p}}\,dt.
\end{split}
\]
In turn, the last integral can be estimated
by appealing to the following concavity inequality  
\begin{equation}
\label{conca}
a^\alpha-b^\alpha\le \alpha\,b^{\alpha-1}\,(a-b),\qquad \mbox{ for } 0<b\le a, \, 0<\alpha<1.
\end{equation} 
With the choice $\alpha=(s\,p-1)/(p-1)$, we get
\begin{equation}
\label{stimaI12}
\begin{split}
\int_\frac{1}{2}^1 \frac{\left|1-t^\frac{s\,p-1}{p-1}\right|^p}{|1-t|^{1+s\,p}}\,dt&\le  \left(\frac{s\,p-1}{p-1}\right)^p\,2^\frac{p\,(1-s)}{p-1}\,\int_\frac{1}{2}^1 (1-t)^{(1-\,s)\,p-1}\,dt\\
&=\frac{2^{\frac{p\,(2-p)}{p-1}\,(s-1)}}{p\,(1-s)}\, \left(\frac{s\,p-1}{p-1}\right)^p.
\end{split}
\end{equation} 
This is enough to conclude.
\end{proof}
In higher dimension we get a very similar estimate, but the computations are a bit more involved.
\begin{lemma}[Higher dimension]
Let $N\ge 2$, $0<s<1$ and $1<p<\infty$ such that $s\,p>N$. There exists a constant $C=C(N,p)>0$  such that    
\begin{equation}
\label{estimate2d}
\begin{split}
[\zeta]^p_{W^{s,p}(\mathbb{R}^N)}&\le  \frac{C}{s\,p-N}   \left(  \int_0^\frac{1}{2}\left|1-t^\frac{s\,p-N}{p-1}\right|^p\,\left(1+t^{\frac{N\,p-s\,p}{p-1}-1}\right) \,dt +  \frac{1}{1-s}  \left(\frac{s\,p-N}{p-1}\right)^p \right)\\
&+\frac{2\,(N\,\omega_N)^2}{s\,p}\,\int_{0}^1 \frac{\left(1-t^\frac{s\,p-N}{p-1}\right)^p}{(1-t)^{s\,p}}\,t^{N-1}\,dt.
\end{split}
\end{equation}
\end{lemma}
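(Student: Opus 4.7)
The plan is to split the seminorm at the unit sphere,
\[
[\zeta]^p_{W^{s,p}(\mathbb{R}^N)} = [\zeta]^p_{W^{s,p}(B_1(0))} + 2 \iint_{B_1(0) \times (\mathbb{R}^N \setminus B_1(0))} \frac{|\zeta(x)|^p}{|x-y|^{N+s\,p}}\,dx\,dy,
\]
using that $\zeta\equiv 0$ outside $\overline{B_1(0)}$. The cross integral is handled exactly as in the one-dimensional case: for every $x\in B_1(0)$ the inclusion $B_{1-|x|}(x)\subseteq B_1(0)$ gives $\int_{\mathbb{R}^N\setminus B_1(0)}|x-y|^{-(N+s\,p)}\,dy\le N\,\omega_N/(s\,p\,(1-|x|)^{s\,p})$, and passing to polar coordinates in $x$ produces exactly the second term on the right-hand side of \eqref{estimate2d}.

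For the interior seminorm I would pass to bispherical coordinates $x=r\,\omega$, $y=\rho\,\sigma$ and exploit the identity $|r\,\omega-\rho\,\sigma|^2=(r-\rho)^2+r\,\rho\,|\omega-\sigma|^2$, which follows at once from $|\omega-\sigma|^2=2-2\,\langle\omega,\sigma\rangle$. This reduces $[\zeta]^p_{W^{s,p}(B_1(0))}$ to a double integral over $(0,1)\times(0,1)$ weighted by $r^{N-1}\,\rho^{N-1}\,|r^\alpha-\rho^\alpha|^p\,\Psi(r,\rho)$, where
\[
\Psi(r,\rho):=\iint_{\mathbb{S}^{N-1}\times\mathbb{S}^{N-1}}\frac{d\mathcal{H}^{N-1}(\omega)\,d\mathcal{H}^{N-1}(\sigma)}{\bigl((r-\rho)^2+r\,\rho\,|\omega-\sigma|^2\bigr)^{(N+s\,p)/2}}.
\]
The central technical step is the pointwise bound $\Psi(r,\rho)\le C_{N,p}\,(r\,\rho)^{-(N-1)/2}\,|r-\rho|^{-(1+s\,p)}$. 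I would derive it by fixing $\sigma$, writing $\omega$ in polar form around $\sigma$ so that $|\omega-\sigma|^2=4\sin^2(\phi/2)$, and then performing the substitution $v=2\sin(\phi/2)\,\sqrt{r\,\rho}/|r-\rho|$, which converts the inner integrand into $v^{N-2}/(1+v^2)^{(N+s\,p)/2}$ on a finite interval that I then enlarge to $(0,\infty)$ to get a bounded Beta integral. For $N\ge 3$ the Jacobian factor $(1-\sin^2(\phi/2))^{(N-3)/2}$ is harmless; in the delicate case $N=2$ the same bound is reached by first estimating $\sin(\phi/2)\ge \phi/\pi$ on $[0,\pi/2]$, which sidesteps the mild $(1-w^2)^{-1/2}$ singularity altogether.

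Given the bound on $\Psi$, the symmetry $r\leftrightarrow\rho$ combined with the substitution $\rho=t\,r$, $t\in(0,1)$, yields
\[
[\zeta]^p_{W^{s,p}(B_1(0))}\le 2\,C_{N,p}\,\biggl(\int_0^1 r^{\alpha\,p+N-s\,p-1}\,dr\biggr)\int_0^1 \frac{t^{(N-1)/2}\,(1-t^\alpha)^p}{(1-t)^{1+s\,p}}\,dt,
\]
with $\alpha=(s\,p-N)/(p-1)$. The radial integral equals $1/\alpha=(p-1)/(s\,p-N)$, providing the $1/(s\,p-N)$ prefactor. The remaining $t$-integral is split at $1/2$: on $[0,1/2]$ one uses $(1-t)^{-(1+s\,p)}\le 2^{1+s\,p}$ and the trivial $t^{(N-1)/2}\le 1\le 1+t^\beta$ (with $\beta=(N\,p-s\,p)/(p-1)-1$), producing the first integral on the right-hand side of \eqref{estimate2d}; on $[1/2,1]$ the concavity inequality \eqref{conca} applied to $\tau\mapsto\tau^\alpha$ gives $1-t^\alpha\le\alpha\,(1-t)$, whence $\int_{1/2}^1 t^{(N-1)/2}\,(1-t)^{p-1-s\,p}\,dt\le C/(p\,(1-s))$, producing the $\alpha^p/(1-s)$ contribution. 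The principal obstacle is securing the pointwise bound on $\Psi(r,\rho)$ with a constant that remains bounded uniformly as $s\,p\searrow N$, which requires the different treatments of $N=2$ and $N\ge 3$ described above to handle the angular Jacobian near $\phi=\pi$.
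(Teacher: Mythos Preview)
Your proof is correct and follows the same global architecture as the paper (split at $\partial B_1(0)$, estimate the cross term via $B_{1-|x|}(x)\subseteq B_1(0)$, pass the interior piece to radial coordinates), but the treatment of the angular kernel $\Psi(r,\rho)$ is genuinely different.

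The paper exploits the exact homogeneity $\Psi_{N,sp}(\varrho,r)=\varrho^{-(N+sp)}\Phi_{N,sp}(r/\varrho)$ and then estimates $\Phi_{N,sp}(t)$ in two regimes: $\Phi_{N,sp}(t)\le C$ for $t\in(0,1/2]$ and $\Phi_{N,sp}(t)\le C(1-t)^{-(1+sp)}$ for $t\in[1/2,1)$. This two-regime estimate, together with separate substitutions $r/\varrho=t$ and $r/\varrho=1/t$, produces directly the weight $t^{N-1}+t^{(Np-sp)/(p-1)-1}$ that appears in the statement. You instead push the near-diagonal bound $\Psi(r,\rho)\le C_{N,p}(r\rho)^{-(N-1)/2}|r-\rho|^{-(1+sp)}$ globally, which collapses the interior seminorm to the single integral $\int_0^1 t^{(N-1)/2}(1-t^\alpha)^p(1-t)^{-(1+sp)}\,dt$; the weight $1+t^{(Np-sp)/(p-1)-1}$ is then recovered only by the crude step $t^{(N-1)/2}\le 1\le 1+t^\beta$. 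Your route is shorter and avoids the homogeneity bookkeeping; the paper's route is sharper in that it never overestimates $\Phi$ away from the diagonal, which is why it naturally produces the precise weight in \eqref{estimate2d} rather than arriving at it by enlargement. Both approaches give constants uniform in $s$ (your Beta-type integral $\int_0^\infty v^{N-2}(1+v^2)^{-(N+sp)/2}\,dv$ is bounded on the range $sp\in[N,p]$), and both finish the $[1/2,1]$ piece with the same concavity inequality \eqref{conca}. One small point: in your $N=2$ sketch the relevant inequality is $\sin(\theta/2)\ge\theta/\pi$ on $[0,\pi]$ (equivalently $\sin\phi\ge 2\phi/\pi$ on $[0,\pi/2]$); with that correction the argument goes through as you describe.
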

\begin{proof}
Since $\zeta$ identically vanishes outside $B_1(0)$, we first observe that we have
\[
[\zeta]_{W^{s,p}(\mathbb{R}^N)}^p=[\zeta]_{W^{s,p}(B_1(0))}^p+2\,\iint_{B_1(0)\times(\mathbb{R}^N\setminus B_1(0))} \frac{|\zeta(x)|^p}{|x-y|^{N+s\,p}}\,dx\,dy.
\]
Moreover, for every $x\in B_1(0)$, we have 
\[
\mathbb{R}^N\setminus B_1(0) \subseteq \mathbb{R}^N\setminus B_d(x),
\]
where we set for brevity $d=\mathrm{dist}(x,\partial B_1(0))=1-|x|$. This permits to infer that
\[
\begin{split}
[\zeta]_{W^{s,p}(\mathbb{R}^N)}^p&\le [\zeta]_{W^{s,p}(B_1(0))}^p+2\,\int_{B_1(0)} |\zeta(x)|^p\,\left(\int_{\mathbb{R}^N\setminus B_d(x)} \frac{dy}{|x-y|^{N+s\,p}}\right)\,dx\\
&=[\zeta]_{W^{s,p}(B_1(0))}^p+\frac{2\,N\,\omega_N}{s\,p}\,\int_{B_1(0)} \frac{|\zeta(x)|^p}{(1-|x|)^{s\,p}}\,dx\\
&=[\zeta]_{W^{s,p}(B_1(0))}^p+\frac{2\,(N\,\omega_N)^2}{s\,p}\,\int_{0}^1 \frac{\left(1-t^\frac{s\,p-N}{p-1}\right)^p}{(1-t)^{s\,p}}\,t^{N-1}\,dt.
\end{split}
\]
By using spherical coordinates and the fact that 
\[
|\zeta(x)-\zeta(y)|=\Big||x|^\frac{s\,p-N}{p-1}-|y|^\frac{s\,p-N}{p-1}\Big|,\qquad \mbox{ for }x,y\in B_1(0),
\] 
we can write
\[
[\zeta]^p_{W^{s,p}(B_1(0))}=\iint_{(0,1)\times(0,1)} \left|\varrho^\frac{s\,p-N}{p-1}-r^\frac{s\,p-N}{p-1}\right|^p\,\varrho^{N-1}\,r^{N-1}\,\Psi_{N,s\,p}(\varrho,r)\,d\varrho\,dr.
\]
Here the function $\Psi_{N,s\,p}$ is given by
\[
\Psi_{N,s\,p}(\varrho,r)=\int_{-1}^1 \frac{(1-t^2)^\frac{N-3}{2}}{\Big((\varrho-r)^2+2\,\varrho\,r\,(1-t)\Big)^{\frac{N+s\,p}{2}}}\,dt,
\]
and it has the following properties
\begin{equation}
\label{propp}
\Psi_{N,s\,p}(\varrho,r)=\Psi_{N,s\,p}(r,\varrho)\qquad \mbox{ and }\qquad \Psi_{N,s\,p}(\varrho,r)=\frac{1}{\varrho^{N+s\,p}}\,\Phi_{N,s\,p}\left(\frac{r}{\varrho}\right),
\end{equation}
where the function $\Phi_{N,s\,p}$ is the same as in \eqref{phistrange}.
We can rewrite
\[
\begin{split}
[\zeta]^p_{W^{s,p}(B_1(0))}&=\iint_{(0,1)\times(0,1)} \left|\varrho^\frac{s\,p-N}{p-1}-r^\frac{s\,p-N}{p-1}\right|^p\,\varrho^{N-1}\,r^{N-1}\,\Psi_{N,s\,p}(\varrho,r)\,d\varrho\,dr\\
&=\int_0^1 \varrho^{\frac{s\,p-N}{p-1}-2}\,\left(\int_0^1 \left|1-\left(\frac{r}{\varrho}\right)^\frac{s\,p-N}{p-1}\right|^p\,\left(\frac{r}{\varrho}\right)^{N-1}\,\Psi_{N,s\,p}\left(1,\frac{r}{\varrho}\right)\,dr\right)\,d\varrho.
\end{split}
\]
We pass to analyze the integral in $r$: we decompose it as follows
\[
\begin{split}
\int_0^1 \left|1-\left(\frac{r}{\varrho}\right)^\frac{s\,p-N}{p-1}\right|^p\,\left(\frac{r}{\varrho}\right)^{N-1}\,\Phi\left(1,\frac{r}{\varrho}\right)\,dr&=\int_0^\varrho \left|1-\left(\frac{r}{\varrho}\right)^\frac{s\,p-N}{p-1}\right|^p\,\left(\frac{r}{\varrho}\right)^{N-1}\,\Psi_{N,s\,p}\left(1,\frac{r}{\varrho}\right)\,dr\\
&+\int_\varrho^1 \left|1-\left(\frac{r}{\varrho}\right)^\frac{s\,p-N}{p-1}\right|^p\,\left(\frac{r}{\varrho}\right)^{N-1}\,\Psi_{N,s\,p}\left(1,\frac{r}{\varrho}\right)\,dr
\end{split}
\]
and then we make the change of variable $r/\varrho=t$ in the first integral and $r/\varrho=1/t$ is the second one. This gives
\[
\begin{split}
\int_0^1 \left|1-\left(\frac{r}{\varrho}\right)^\frac{s\,p-N}{p-1}\right|^p\,\left(\frac{r}{\varrho}\right)^{N-1}\,\Psi_{N,s\,p}\left(1,\frac{r}{\varrho}\right)\,dr&=\varrho\,\int_0^1 \left|1-t^\frac{s\,p-N}{p-1}\right|^p\,t^{N-1}\,\Psi_{N,s\,p}\left(1,t\right)\,dt\\
&+\varrho\,\int_\varrho^1 \frac{\left|1-t^\frac{s\,p-N}{p-1}\right|^p}{t^{\frac{s\,p-N}{p-1}+N+1}}\,\Psi_{N,s\,p}\left(1,\frac{1}{t}\right)\,dt.
\end{split}
\] 
In the last integral, we can use \eqref{propp} to infer that 
\[
\Psi_{N,s\,p}\left(1,\frac{1}{t}\right)=t^{N+s\,p}\,\Phi_{N,s\,p}(t).
\]
Thus, we finally get
\[
\begin{split}
\int_0^1 \left|1-\left(\frac{r}{\varrho}\right)^\frac{s\,p-N}{p-1}\right|^p\,\left(\frac{r}{\varrho}\right)^{N-1}\,\Psi_{N,s\,p}\left(1,\frac{r}{\varrho}\right)\,dr&=\varrho\,\int_0^1 \left|1-t^\frac{s\,p-N}{p-1}\right|^p\,t^{N-1}\,\Phi_{N,s\,p}(t)\,dt\\
&+\varrho\,\int_\varrho^1 \frac{\left|1-t^\frac{s\,p-N}{p-1}\right|^p}{t^{\frac{s\,p-N\,p}{p-1}+1}}\,\Phi_{N,s\,p}(t)\,dt\\
&\le \varrho\,\int_0^1 \left|1-t^\frac{s\,p-N}{p-1}\right|^p\,\left(t^{N-1}+t^{\frac{N\,p-s\,p}{p-1}-1}\right)\,\Phi_{N,s\,p}(t)\,dt .
\end{split}
\]
We notice that 
\[
\frac{s\,p-N}{p-1}-1>-1,
\]
hence, from the previous estimates we obtain 
\[
\begin{split}
[\zeta]^p_{W^{s,p}(B_1(0))}&\le \left(\int_0^1\varrho^{\frac{s\,p-N}{p-1}-1}\,d\varrho\right)\,\int_0^1 \left|1-t^\frac{s\,p-N}{p-1}\right|^p\,\left(t^{N-1}+t^{\frac{N\,p-s\,p}{p-1}-1}\right)\,\Phi_{N,s\,p}(t)\,dt\\
&= \frac{p-1}{s\,p-N}\,\int_0^1 \left|1-t^\frac{s\,p-N}{p-1}\right|^p\,\left(t^{N-1}+t^{\frac{N\,p-s\,p}{p-1}-1}\right)\,\Phi_{N,s\,p}(t)\,dt\\
&\leq  \frac{p-1}{s\,p-N}\,\int_0^1 \left|1-t^\frac{s\,p-N}{p-1}\right|^p\,\left(1+t^{\frac{N\,p-s\,p}{p-1}-1}\right)\,\Phi_{N,s\,p}(t)\,dt=:\frac{p-1}{s\,p-N}\,\mathcal I.
\end{split}
\]
As in the one-dimensional case, we can break $\mathcal{I}$ in two pieces
\[
\mathcal I_1=\int_0^{1/2} \left|1-t^\frac{s\,p-N}{p-1}\right|^p\,\left(1+t^{\frac{N\,p-s\,p}{p-1}-1}\right)\,\Phi_{N,s\,p}(t)\,dt,
\]
and
\[
 \mathcal{I}_2:=\int_{1/2}^1 \left|1-t^\frac{s\,p-N}{p-1}\right|^p\,\left(1+t^{\frac{N\,p-s\,p}{p-1}-1}\right)\,\Phi_{N,s\,p}(t)\,dt.
\]
For $0<t\leq 1/2$, by recalling the definition \eqref{phistrange} it holds
\[
\Phi_{N,s\,p}(t) \leq \frac{1}{(1-t)^{N+s\,p}}\,  \int_{-1}^1 {(1-\tau^2)^\frac{N-3}{2}}\,d\tau\leq C\, 2^{N+p},
\]	
with $C=C(N)>0$ (observe that the last integral is converging for $N\ge 2$). This yields 
\[
\mathcal I_1\leq  C\, \int_0^{1/2} \left|1-t^\frac{s\,p-N}{p-1}\right|^p\,\left(1+t^{\frac{N\,p-s\,p}{p-1}-1}\right)\,dt,
\]
possibly for a different $C=C(N,p)>0$.
 In the case $1/2 \leq t<1$, by using the change of variable 
\[
z=\displaystyle \frac{2t}{(1-t)^2}(1-\tau),
\] 
it is not difficult to show that 
\[
\Phi_{N,s\,p}(t) \le C\,(1-t)^{-(s\,p+1)},  
\]
for a dimensional constant $C>0$ (see \cite[formula (A.8)]{BMS}). 
By using this estimate and again \eqref{conca}, we obtain that 
\[  
\begin{split} 
\mathcal I_2&\leq C\,\int_{1/2}^1 \frac{\left|1-t^\frac{s\,p-N}{p-1}\right|^p}{(1-t)^{1+s\,p}}\,\left(1+t^{\frac{N\,p-s\,p}{p-1}-1}\right)\,\Phi_{N,s\,p}(t)\,dt\\
&\leq  C\,  \left(\frac{s\,p-N}{p-1}\right)^p  \int_{1/2}^1 \left(1-t\right)^{(1-s)\,p-1}\,  \left( t^{\left(\frac{s\,p-N}{p-1}-1\right)\,p}  +t^{s\,p-p-1}\right)\,dt\\
&\leq   C\,  \left( 2^{\left(\frac{N-s\,p}{p-1}+1\right)\,p}+ 2^{-s\,p+p+1}\right)\,\left(\frac{s\,p-N}{p-1}\right)^p  \int_{1/2}^1 \left(1-t\right)^{(1-s)\,p-1}\,dt.
\end{split} 
\]
By computing the last integral and combining the estimates given for  $\mathcal I_1$ and $\mathcal I_2$, we finally obtain the desired conclusion.
\end{proof}
Finally, the following estimate is the main point of this section.
\begin{lemma}
\label{lm:gloria!}
For $N\ge 1$, $0<s<1$ and $1<p<\infty$ such that $s\,p>N$, we have 
\[
[\zeta]^p_{W^{s,p}(\mathbb{R}^N)}\le  C\,\frac{(s\,p-N)^{p-1}}{1-s},\qquad \mbox{ for every } \frac{N}{p}<s<1.
\]
The constant $C=C(N,p)>0$ does not depend on $s$.
\end{lemma}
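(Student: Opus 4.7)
The plan is to parametrize the problem through $\alpha := (sp-N)/(p-1)$, which lies in $(0,1)$ since $sp>N$ and $sp<p$. The target estimate then reads $[\zeta]_{W^{s,p}(\mathbb{R}^N)}^p \le C(N,p)\,(\alpha(p-1))^{p-1}/(1-s)$. My strategy is to control separately the three contributions on the right-hand side of \eqref{estimate1d} (respectively \eqref{estimate2d}): a first term already of the form $C(sp-N)^{p-1}/((1-s)(p-1)^p)$ that needs no further work; a local integral on $(0,1/2)$ that I will call \textbf{Term B}; and the tail integral containing the singular factor $(1-t)^{-sp}$ that I will call \textbf{Term C}.

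Two elementary pointwise estimates for $g(t):=1-t^\alpha$ on $(0,1)$ drive the proof. First, $g(t) \le \alpha |\log t|$, which follows from $1-e^{-x}\le x$ applied with $x=-\alpha\log t\ge 0$. Second, $g(t)\le \alpha(1-t)\,t^{\alpha-1}$, obtained from $g(t)=\alpha \int_t^1 \tau^{\alpha-1}\,d\tau$ and monotonicity of $\tau \mapsto \tau^{\alpha-1}$ on $(t,1)$. The first integrates to $\int_0^{1/2} g(t)^p\, dt \le C(p)\,\alpha^p$ (the case where $\alpha$ is close to $1$ is absorbed into the constant via the trivial bound $1/2$), while the second gives $g(t)\le 2\alpha(1-t)$ on $(1/2,1)$, hence
\[
\int_{1/2}^1 \frac{g(t)^p}{(1-t)^{sp}}\, dt \;\le\; (2\alpha)^p \int_{1/2}^1 (1-t)^{p(1-s)}\, dt \;\le\; \frac{C(p)\,\alpha^p}{1-s}.
\]
Using also the trivial bound $(1-t)^{-sp}\le 2^p$ on $(0,1/2)$, these estimates dominate \textbf{Term C} by $C(N,p)\,\alpha^p/(sp(1-s))$, which is itself bounded by $C(N,p)\,(sp-N)^{p-1}/(1-s)$ after using $sp\ge N$ and $\alpha\le 1$. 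For $N\ge 2$, the polynomial factor $t^{(Np-sp)/(p-1)-1}$ appearing in \textbf{Term B} has non-negative exponent and is bounded by $1$ on $(0,1)$, so the bound $\int_0^{1/2} g(t)^p\,dt\le C(p)\alpha^p$ combined with the prefactor $1/(sp-N) = 1/(\alpha(p-1))$ closes the estimate.

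The genuinely delicate case, which I expect to be the main obstacle, is \textbf{Term B} for $N=1$, where one also has to control $I_\alpha := \int_0^{1/2} g(t)^p\, t^{-\alpha}\, dt$. The substitution $u=t^\alpha$ converts $I_\alpha$ into $\alpha^{-1}\int_0^{(1/2)^\alpha} (1-u)^p\,u^{1/\alpha-2}\,du$, bounded by $\alpha^{-1}\,B(1/\alpha-1,\,p+1)$, where $B$ denotes the Euler Beta function. Splitting according to whether $\alpha \le 1/2$ or $\alpha > 1/2$ yields the uniform estimate $I_\alpha \le C(p)\,\alpha^p/(1-\alpha)$: for $\alpha\le 1/2$ the parameter $x:=1/\alpha-1\ge 1$ is large and Stirling gives $B(x,p+1)\le C(p)/x^{p+1}$, hence $I_\alpha \le C(p)\alpha^p$; for $\alpha>1/2$ the parameter $x\in(0,1)$ is small and $\Gamma(x)\sim 1/x$ produces $I_\alpha \le C(p)/(1-\alpha)$, which in turn is bounded by $C(p)\alpha^p/(1-\alpha)$ since $\alpha\ge 1/2$. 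To conclude in the one-dimensional case, I will use the identity $1-\alpha = p(1-s)/(p-1)$ (valid exactly for $N=1$), which converts $1/(1-\alpha)$ into $(p-1)/(p(1-s))$; coupled with the prefactor $1/(sp-1)=1/(\alpha(p-1))$, this produces the desired $(sp-1)^{p-1}/(1-s)$ scaling for \textbf{Term B}. Summing the three pieces, with constants depending only on $N$ and $p$, then yields the claim.
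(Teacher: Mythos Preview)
Your proof is correct and follows essentially the same route as the paper: both arguments take the upper bounds \eqref{estimate1d}/\eqref{estimate2d} as input, use the logarithmic estimate $1-t^\alpha\le \alpha|\log t|$ on $(0,1/2)$ and the concavity/mean-value estimate $1-t^\alpha\le \alpha(1-t)t^{\alpha-1}$ on $(1/2,1)$, and then collect the pieces.

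The one noteworthy difference is your treatment of \textbf{Term~B} for $N=1$. The paper dismisses this case with ``the case $N=1$ being exactly the same'', but this is a little cavalier: the weight $t^{-\alpha}$ is unbounded near the origin, and for $N=1$ one has $\alpha\to 1$ as $s\nearrow 1$, so the naive combination of the log bound with the weight would produce $\alpha^p/(1-\alpha)^{p+1}$, which is too singular. Your Beta-function argument, splitting at $\alpha=1/2$ and exploiting the exact relation $1-\alpha=p(1-s)/(p-1)$ (valid precisely for $N=1$), yields the clean estimate $I_\alpha\le C(p)\,\alpha^p/(1-\alpha)$ and closes the gap explicitly. (The paper's argument can of course be rescued in the regime $\alpha>1/2$ by the trivial bound $1-t^\alpha\le 1$ together with $sp-1\ge (p-1)/2$, but this is not written down.) A minor remark: in your bound for $\int_{1/2}^1 g(t)^p(1-t)^{-sp}\,dt$ the exponent $p(1-s)$ is positive, so the integral is in fact bounded by $1/2$ and no factor $1/(1-s)$ is needed there---your stated bound is correct but slightly wasteful.
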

\begin{proof}
Let us give the proof for $N\ge 2$, the case $N=1$ being exactly the same, with \eqref{estimate1d} in place of \eqref{estimate2d}. We proceed as in the proof of \cite[Lemma B.1]{BB}: we observe at first that 
\[
\frac{N\,p-s\,p}{p-1}-1>0,\qquad \mbox{ for }N\ge 2,
\]
thus we get
\[
\int_0^\frac{1}{2}\left|1-t^\frac{s\,p-N}{p-1}\right|^p\,\left(1+t^{\frac{N\,p-s\,p}{p-1}-1}\right) \,dt\le 2\,\int_0^\frac{1}{2}\left|1-t^\frac{s\,p-N}{p-1}\right|^p\, \,dt.
\]
In order to estimate the last integral,  we define
\[
h_\tau(s)=\tau^\frac{s\,p-N}{p-1},\qquad \mbox{ for } \tau>0,\, s>\frac{N}{p}.
\]
Then we have
\[
\left|h_\tau(s)-h_\tau\left(\frac{N}{p}\right)\right|=\left|\int^s_\frac{N}{p} h'_\tau(t)\,dt\right|.
\]
Thus, for $0<\tau\le 1/2$ we get
\[
\left|1-\tau^\frac{s\,p-N}{p-1}\right|=\frac{p}{p-1}\,|\log \tau|\,\left|\int_\frac{N}{p}^s \tau^\frac{t\,p-N}{p-1}\,dt\right|\le \frac{p}{p-1}\,(-\log \tau)\,\left(s-\frac{N}{p}\right)=(-\log \tau)\,\frac{s\,p-N}{p-1}.
\]
We then obtain for $N/p<s<1$ and $N\ge 2$
\begin{equation}
\label{I11low}
\int_0^\frac{1}{2}\left|1-t^\frac{s\,p-N}{p-1}\right|^p\, \,dt\le \left(\frac{s\,p-N}{p-1}\right)^p\,\int_0^\frac{1}{2} (-\log \tau)^p\,d\tau,
\end{equation}
and this gives the desired estimate, since the last integral is finite and independent of $s$. Thus, from \eqref{estimate2d} we get
\[
\begin{split}
[\zeta]^p_{W^{s,p}(\mathbb{R}^N)}&\le  \frac{C}{s\,p-N}   \left(\left(\frac{s\,p-N}{p-1}\right)^p +  \frac{1}{1-s}  \left(\frac{s\,p-N}{p-1}\right)^p \right)\\
&+\frac{2\,(N\,\omega_N)^2}{s\,p}\,\int_{0}^1 \frac{\left(1-t^\frac{s\,p-N}{p-1}\right)^p}{(1-t)^{s\,p}}\,t^{N-1}\,dt,
\end{split}
\]
possibly for a different constant $C=C(N,p)>0$. The last integral can be handled in a similar manner: we have
\[
\begin{split}
\int_{0}^1 \frac{\left(1-t^\frac{s\,p-N}{p-1}\right)^p}{(1-t)^{s\,p}}\,t^{N-1}\,dt&\le \int_{0}^1 \frac{\left(1-t^\frac{s\,p-N}{p-1}\right)^p}{(1-t)^{s\,p}}\,dt\\
&=\int_{0}^\frac{1}{2} \frac{\left(1-t^\frac{s\,p-N}{p-1}\right)^p}{(1-t)^{s\,p}}\,dt+\int_\frac{1}{2}^1 \frac{\left(1-t^\frac{s\,p-N}{p-1}\right)^p}{(1-t)^{s\,p}}\,dt\\
&\le 2^p\,\int_{0}^\frac{1}{2} \left(1-t^\frac{s\,p-N}{p-1}\right)^p\,dt+\int_\frac{1}{2}^1 \frac{\left(1-t^\frac{s\,p-N}{p-1}\right)^p}{(1-t)^{s\,p}}\,dt.
\end{split}
\]
The first integral on the right-hand side has been already estimated in \eqref{I11low}. The second one can be estimated as in \eqref{stimaI12}, by relying on \eqref{conca}. We leave the details to the reader.
\end{proof}

\section{Homogeneous space on the punctured space}
\label{app:B}

We still consider the space $\mathcal{W}^{s,p}(\mathbb{R}^N)$ defined by \eqref{goodspace}. For an open set $\Omega\subseteq\mathbb{R}^N$, we indicate by $\mathscr{D}^{s,p}_0(\Omega)$ the completion of $C^\infty_0(\Omega)$, with respect to the norm
\[
\varphi\mapsto [\varphi]_{W^{s,p}(\mathbb{R}^N)},\qquad\mbox{ for every }\varphi\in C^\infty_0(\Omega).
\]
For a characterization of this space when $\Omega=\mathbb{R}^N$, we refer to \cite{BGCV, MPS}. In the case $s\,p>N$, when $\Omega$ coincides with the whole space minus a finite number of points, we have the following
\begin{proposition}
\label{prop:homo}
Let $N\ge 1$, $0<s\le 1$ and $1<p<\infty$  be such that $s\,p>N$. For every $k\in\mathbb{N}$ and every $x_1,\dots,x_k\in\mathbb{R}^N$, we have 
\[
\mathscr{D}^{s,p}_0(\mathbb{R}^N\setminus\{x_1,\dots,x_k\})=\Big\{u\in \mathcal{W}^{s,p}(\mathbb{R}^N)\, :\, u(x_1)=\dots=u(x_k)=0\Big\}.
\]
\end{proposition}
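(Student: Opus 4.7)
The plan is to prove the two set inclusions separately; the non-trivial content is the density direction $(\supseteq)$.

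For $(\subseteq)$, take $u$ in the completion, realized by a Cauchy sequence $\{\varphi_n\}\subset C^\infty_0(\mathbb{R}^N\setminus\{x_1,\ldots,x_k\})$ in the seminorm $[\,\cdot\,]_{W^{s,p}(\mathbb{R}^N)}$. Morrey's inequality (Theorem \ref{thm:morrey} combined with Proposition \ref{prop:uguaspazi}) applied to $\varphi_n-\varphi_m$ yields $[\varphi_n-\varphi_m]_{C^{0,\alpha_{s,p}}(\mathbb{R}^N)}\to 0$. Each $\varphi_n$ vanishes identically in a neighborhood of each $x_i$, so in particular $\varphi_n(x_1)=0$, and then
\[
|\varphi_n(z)-\varphi_m(z)|=\big|(\varphi_n-\varphi_m)(z)-(\varphi_n-\varphi_m)(x_1)\big|\le [\varphi_n-\varphi_m]_{C^{0,\alpha_{s,p}}(\mathbb{R}^N)}\,|z-x_1|^{\alpha_{s,p}},
\]
so the sequence is Cauchy, hence convergent, locally uniformly on $\mathbb{R}^N$. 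The limit $\tilde u$ is continuous with $\tilde u(x_i)=0$ for every $i$, and Fatou's lemma gives $[\tilde u]_{W^{s,p}(\mathbb{R}^N)}\le\liminf[\varphi_n]_{W^{s,p}(\mathbb{R}^N)}<\infty$. Hence $\tilde u\in\mathcal{W}^{s,p}(\mathbb{R}^N)$, and one identifies $u$ with $\tilde u$.

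For $(\supseteq)$, given $u\in\mathcal{W}^{s,p}(\mathbb{R}^N)$ with $u(x_i)=0$ for every $i$, I plan to approximate it in seminorm by elements of $C^\infty_0(\mathbb{R}^N\setminus\{x_1,\ldots,x_k\})$ via four stages. First, truncate: $u_M:=\max(-M,\min(M,u))$ is bounded, still Hölder, still vanishes at each $x_i$, and dominated convergence in the Gagliardo double integral (the integrand is pointwise bounded by $|u(x)-u(y)|^p/|x-y|^{N+sp}\in L^1$) yields $[u_M-u]_{W^{s,p}}\to 0$. Second, localize at infinity by multiplying $u_M$ by $\chi_R\in C^\infty_0(B_{2R})$ equal to $1$ on $B_R$; for bounded $u_M$ this is standard. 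Third, excise neighborhoods of the $x_i$'s by multiplying by $\prod_i(1-\zeta_{\varepsilon,i})$, where each $\zeta_{\varepsilon,i}$ is a cutoff supported near $x_i$. Fourth, mollify the resulting bounded, compactly supported function (which is now separated from $\{x_1,\ldots,x_k\}$) to obtain the desired $C^\infty_0(\mathbb{R}^N\setminus\{x_1,\ldots,x_k\})$ approximants.

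The main obstacle is the third step. In the regime $sp>N$ every point has strictly positive $(s,p)$-capacity (Lemma \ref{lemma:point}), so the naive single-scale cutoff $\zeta_\varepsilon=\eta(\,\cdot\,/\varepsilon)$ does \emph{not} produce a vanishing error: combining the sharp Hölder bound $|u(x)|\le[u]_{C^{0,\alpha_{s,p}}}|x-x_i|^{\alpha_{s,p}}$ with the scaling of the Gagliardo integral on $B_\varepsilon$, one checks that $[u\zeta_\varepsilon]_{W^{s,p}}^p$ stays bounded below by a positive multiple of $[u]_{C^{0,\alpha_{s,p}}}^p$, independently of $\varepsilon$ (the critical scale $\alpha_{s,p}\,p=sp-N$ balances volume and weight exactly). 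The remedy, in the spirit of the classical construction at borderline Sobolev exponents, is to replace $\zeta_\varepsilon$ by a multi-scale logarithmic cutoff interpolating smoothly between $0$ and $1$ over an annulus of the form $\{\varepsilon^{2}\le|x-x_i|\le\varepsilon\}$. With such a choice the cross-term involving $\nabla\zeta_\varepsilon$ picks up an extra factor of order $1/\log^{p-1}(1/\varepsilon)$, so that the careful splitting of the Gagliardo seminorm into local, non-local, and cross contributions can be made $o(1)$ as $\varepsilon\searrow 0$. This is the technical heart of Appendix \ref{app:B}.
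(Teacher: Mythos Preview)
Your inclusion $(\subseteq)$ is fine and essentially matches what one would expect. For $(\supseteq)$, your strategy is sound and would work, but it takes a genuinely different route from the paper. The paper does \emph{not} use a logarithmic cutoff: it uses the single-scale cutoffs $\eta_n(x)=\eta(n\,x)$ at the puncture and $1-\psi_n(x)=1-\eta(x/n)$ at infinity, and---as you correctly diagnose---these only yield the uniform bound $[u\,\eta_n\,(1-\psi_n)]_{W^{s,p}(\mathbb{R}^N)}\le C\,[u]_{W^{s,p}(\mathbb{R}^N)}$, not strong convergence. The paper then observes that the corresponding two-variable functions $F_n(x,y)=\big(u\eta_n(1-\psi_n)(x)-u\eta_n(1-\psi_n)(y)\big)/|x-y|^{N/p+s}$ are bounded in $L^p(\mathbb{R}^N\times\mathbb{R}^N)$ and converge pointwise, hence weakly; Mazur's lemma then produces \emph{convex combinations} (still compactly supported away from the puncture) that converge strongly in seminorm. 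A final mollification finishes the argument. This trick completely bypasses your logarithmic construction and works directly on $u$ without your preliminary $L^\infty$ truncation (the growth bound \eqref{growth} from Morrey replaces boundedness throughout). Your approach is more constructive and yields an explicit approximating sequence, at the price of the delicate dyadic/annular estimate on the cross-term that you flag; the paper's approach is softer and shorter, but the approximants are less explicit (finite convex combinations of cutoffs at varying scales).
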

\begin{proof}
We will prove the result for the fractional case $0<s<1$ only. The limit case $s=1$ is actually simpler and can be proved by repeating the same arguments.
We also notice that it is sufficient to prove the result for $k=1$, in the general case the proof is exactly the same.
Finally, without loss of generality we can suppose that $x_1$ coincides with the origin.
\par
We first observe that if $u\in \mathcal{W}^{s,p}(\mathbb{R}^N)$ is such that $u(0)=0$, then by Morrey's inequality we have
\begin{equation}
\label{growth}
|u(x)|^p= |u(x)-u(0)|^p\le \frac{[u]^p_{W^{s,p}(\mathbb{R}^N)}}{\mathfrak{m}_{s,p}(\mathbb{R}^N)}\,|x|^{s\,p-N},\qquad \mbox{ for every } x\in \mathbb{R}^N,
\end{equation}
We take a cut-off function $\eta\in C^\infty(\mathbb{R}^N)$ such that
\[
0\le \eta\le 1,\qquad \eta\equiv 0 \mbox{ in } B_1(0),\qquad \eta \equiv 1 \mbox{ in } \mathbb{R}^N\setminus B_2(0),\qquad \|\nabla \eta\|_{L^\infty}\le C_N.
\]
{\bf Truncation at the origin}.
For every $n\in\mathbb{N}\setminus\{0\}$ we define 
\[
\eta_n(x):=\eta(n\,x),\qquad \mbox{ for } x\in \mathbb{R}^N.
\]
We wish to prove at first that there exists a constant $C=C(N,s,p)>0$ such that 
\begin{equation}
\label{bound}
[u\,\eta_n]_{W^{s,p}(\mathbb{R}^N)}^p\le C\,[u]^p_{W^{s,p}(\mathbb{R}^N)},\qquad \mbox{ for every } n\in\mathbb{N}.
\end{equation}
We start by decomposing the seminorm as follows
\[
\begin{split}
[u\,\eta_n]_{W^{s,p}(\mathbb{R}^N)}^p&=\iint_{B_\frac{3}{n}(0)\times B_\frac{3}{n}(0)} \frac{|u(x)\,\eta_n(x)-u(y)\,\eta_n(y)|^p}{|x-y|^{N+s\,p}}\,dx\,dy\\
&+2\,\iint_{B_\frac{3}{n}(0)\times (\mathbb{R}^N\setminus B_\frac{3}{n}(0))} \frac{|u(x)\,\eta_n(x)-u(y)|^p}{|x-y|^{N+s\,p}}\,dx\,dy\\
&+\iint_{(\mathbb{R}^N\setminus B_\frac{3}{n}(0))\times (\mathbb{R}^N\setminus B_\frac{3}{n}(0))} \frac{|u(x)-u(y)|^p}{|x-y|^{N+s\,p}}\,dx\,dy:=\mathcal{J}_1+2\,\mathcal{J}_2+\mathcal{J}_3.
\end{split}
\]
It is easily seen that $\mathcal{J}_3$ is uniformly bounded by the seminorm of $u$ on the whole $\mathbb{R}^N$.
For $\mathcal{J}_1$, by the triangle inequality and the fact that $0\le \eta_n\le 1$, we have 
\[
\begin{split}
\mathcal{J}_1&\le 2^{p-1}\,\iint_{B_\frac{3}{n}(0)\times B_\frac{3}{n}(0)} \frac{|u(x)-u(y)|^p\,|\eta(x)|^p}{|x-y|^{N+s\,p}}\,dx\,dy\\
&+2^{p-1}\,\iint_{B_\frac{3}{n}(0)\times B_\frac{3}{n}(0)} \frac{|\eta_n(x)-\eta_n(y)|^p\,|u(y)|^p}{|x-y|^{N+s\,p}}\,dx\,dy\\
&\le 2^{p-1}\,[u]^p_{W^{s,p}(B_\frac{2}{n}(0))}+2^{p-1}\,\frac{[u]^p_{W^{s,p}(\mathbb{R}^N)}}{\mathfrak{m}_{s,p}(\mathbb{R}^N)}\,\left(\frac{3}{n}\right)^{N-s\,p}\,\iint_{B_\frac{3}{n}(0)\times B_\frac{3}{n}(0)} \frac{|\eta_n(x)-\eta_n(y)|^p}{|x-y|^{N+s\,p}}\,dx\,dy,
\end{split}
\]
where we also used \eqref{growth}. By recalling the construction of $\eta_n$, a change of variable finally leads to
\[
\mathcal{J}_1\le C\,[u]^p_{W^{s,p}(\mathbb{R}^N)}\,\left(1 +[\eta]^p_{W^{s,p}(B_3(0))}\right),
\]
for some $C=C(N,s,p)>0$. In turn, by using the Lipschitz character of $\eta$, we have
\[
[\eta]^p_{W^{s,p}(B_3(0))}\le \|\nabla \eta\|_{L^\infty}^p\,\iint_{B_3(0)\times B_3(0)} |x-y|^{p\,(1-s)-N}\,dx\,dy\le C.
\]
This permits to conclude that 
\[
\mathcal{J}_1\le C\,[u]^p_{W^{s,p}(\mathbb{R}^N)},
\]
for some $C=C(N,s,p)>0$.
For $\mathcal{J}_2$, we recall that $\eta_n\equiv 1$ outside $B_{2/n}(0)$ and that $0\le \eta_n\le 1$, thus we have
\[
\begin{split}
\mathcal{J}_2&=\iint_{B_\frac{2}{n}(0)\times (\mathbb{R}^N\setminus B_\frac{3}{n}(0))} \frac{|u(x)\,\eta_n(x)-u(y)|^p}{|x-y|^{N+s\,p}}\,dx\,dy\\
&+\iint_{(B_\frac{3}{n}(0)\setminus B_\frac{2}{n}(0))\times (\mathbb{R}^N\setminus B_\frac{3}{n}(0))} \frac{|u(x)-u(y)|^p}{|x-y|^{N+s\,p}}\,dx\,dy\\
&\le 2^{p-1}\,\iint_{B_\frac{2}{n}(0)\times (\mathbb{R}^N\setminus B_\frac{3}{n}(0))} \frac{|u(x)-u(y)|^p}{|x-y|^{N+s\,p}}\,dx\,dy\\
&+2^{p-1}\,\iint_{B_\frac{2}{n}(0)\times (\mathbb{R}^N\setminus B_\frac{3}{n}(0))} \frac{|\eta_n(x)-1|^p\,|u(y)|^p}{|x-y|^{N+s\,p}}\,dx\,dy\\
&+[u]_{W^{s,p}(\mathbb{R}^N)}^p\\
&\le C\,[u]_{W^{s,p}(\mathbb{R}^N)}^p+2^{p-1}\,n^{s\,p-N}\,\iint_{B_2(0)\times (\mathbb{R}^N\setminus B_3(0))} \frac{|\eta(x)-1|^p\,|u(y/n)|^p}{|x-y|^{N+s\,p}}\,dx\,dy.
\end{split}
\]
In the last estimate we also used a change of variable. We now observe that for $y\in \mathbb{R}^N\setminus B_3(0)$ and $x\in B_2(0)$, we have
\[
|x-y|\ge |y|-|x|\ge |y|-2\ge \frac{1}{3}\,|y|\ge \frac{1}{6}\,(1+|y|).
\]
Thus we get
\[
\begin{split}
\iint_{B_2(0)\times (\mathbb{R}^N\setminus B_3(0))} \frac{|\eta(x)-1|^p\,|u(y/n)|^p}{|x-y|^{N+s\,p}}\,dx\,dy&\le C\,\iint_{B_2(0)\times (\mathbb{R}^N\setminus B_3(0))} \frac{|u(y/n)|^p}{(1+|y|)^{N+s\,p}}\,dx\,dy\\
&=C\,N\,\omega_N\,2^N\,\int_{\mathbb{R}^N\setminus B_3(0)} \frac{|u(y/n)|^p}{(1+|y|)^{N+s\,p}}\,dy.
\end{split}
\]
By using again \eqref{growth}, we have 
\[
\iint_{B_2(0)\times (\mathbb{R}^N\setminus B_3(0))} \frac{|\eta(x)-1|^p\,|u(y/n)|^p}{|x-y|^{N+s\,p}}\,dx\,dy\le C\,[u]^p_{W^{s,p}(\mathbb{R}^N)}\,n^{N-s\,p}\,\int_{\mathbb{R}^N\setminus B_3(0)} \frac{|y|^{s\,p-N}}{(1+|y|)^{N+s\,p}}\,dy,
\]
and the last integral is finite. By collecting the previous estimates, we finally obtain that $\mathcal{J}_2$ is uniformly bounded, as well. Thus, \eqref{bound} holds true.
\vskip.2cm\noindent
{\bf Truncation at infinity}. For every $n\in\mathbb{N}\setminus\{0\}$, we now set
\[
\psi_n(x):=\eta\left(\frac{x}{n}\right),\qquad \mbox{ for }x\in\mathbb{R}^N.
\]
By applying \cite[Lemma B.2]{BGCV} to the function $u\,\eta_n$, we get
\begin{equation}
\label{bound2}
[u\,\eta_n\,\psi_n]_{W^{s,p}(\mathbb{R}^N)}\le C\,[u\,\eta_n]_{W^{s,p}(\mathbb{R}^N)},\qquad \mbox{ for every } n\in\mathbb{N},
\end{equation}
for some $C=C(N,s,p)>0$.
\vskip.2cm\noindent
{\bf Rough approximation}. We now set 
\[
F_n(x,y)=\frac{u(x)\,\eta_n(x)\,(1-\psi_n(x))-u(y)\,\eta_n(y)\,(1-\psi_n(y))}{|x-y|^{\frac{N}{p}+s}},\qquad \mbox{ for a.\,e. } (x,y)\in\mathbb{R}^N\setminus\mathbb{R}^N.
\]
We observe that this sequence is bounded in $L^p(\mathbb{R}^N\times\mathbb{R}^N)$, since 
\[
\begin{split}
\|F_n\|_{L^p(\mathbb{R}^N\times\mathbb{R}^N)}&= [u\,\eta_n\,(1-\psi_n)]_{W^{s,p}(\mathbb{R}^N)}\\
&\le [u\,\eta_n]_{W^{s,p}(\mathbb{R}^N)}+[u\,\eta_n\,\psi_n]_{W^{s,p}(\mathbb{R}^N)}\le C\,[u]_{W^{s,p}(\mathbb{R}^N)},\qquad \mbox{ for every } n\in\mathbb{N},
\end{split}
\]
thanks to \eqref{bound2} and \eqref{bound}.
Thus, up to a subsequence, it converges weakly in $L^p(\mathbb{R}^N\times\mathbb{R}^N)$. Such a limit must coincide with the pointwise limit, which by construction is given by the function
\[
F(x,y)=\frac{u(x)-u(y)}{|x-y|^{\frac{N}{p}+s}},\qquad \mbox{ for a.\,e. } (x,y)\in\mathbb{R}^N\times\mathbb{R}^N.
\] 
We can now appeal to Mazur's Lemma and infer existence of a sequence of convex combinations of $F_n$ which strongly converges to $F$ in $L^p(\mathbb{R}^N\times\mathbb{R}^N)$. Thanks to the peculiar form of $F_n$ and the properties of both $\psi_n$ and $\eta_n$, this implies that there exists a new sequence
\[
\phi_n\in C^\infty_0(B_{2n}(0)\setminus B_\frac{1}{n}(0)),
\] 
such that 
\[
\lim_{n\to\infty}\left\|\frac{u(x)\,\phi_n(x)-u(y)\,\phi_n(y)}{|x-y|^{\frac{N}{p}+s}}-\frac{u(x)-u(y)}{|x-y|^{\frac{N}{p}+s}}\right\|_{L^p(\mathbb{R}^N\times\mathbb{R}^N)}=0.
\]
This is the same as
\[
\lim_{n\to\infty} [u\,\phi_n-u]_{W^{s,p}(\mathbb{R}^N)}=0.
\]
Thus, $u$ is the strong limit of a sequence of $W^{s,p}(\mathbb{R}^N)$ function, with compact support which is distant from the origin.
\vskip.2cm\noindent
{\bf Smooth approximation}. Finally, from the previous point we know that there exists a sequence $\{u_n\}_{n\in\mathbb{N}}\subseteq W^{s,p}(\mathbb{R}^N)$ approximating $u$ in the Sobolev-Slobodecki\u{\i} seminorm, such that 
\[
u_n\equiv 0\quad \mbox{ in } B_\frac{1}{n}(0)\cup (\mathbb{R}^N\setminus B_{2n}(0)).
\]
We take $\{\rho_k\}_{k\in\mathbb{N}}$ the usual family of Friedrichs mollifiers and set 
\[
u_{n,k}=u_n\ast \rho_k.
\]
Thanks to the properties of convolutions, for every $k\in\mathbb{N}$ large enough we have that $u_{n,k}\in C^\infty_0(\mathbb{R}^N\setminus\{0\})$ and 
\[
\lim_{k\to\infty} [u_{n,k}-u_n]_{W^{s,p}(\mathbb{R}^N)}=0.
\]
In particular, for every $n\in\mathbb{N}\setminus\{0\}$, we can choose $k_n\in \mathbb{N}$ such that 
\[
[u_{n,k}-u_n]_{W^{s,p}(\mathbb{R}^N)}\le \frac{1}{n},\qquad \mbox{ for every } k\ge k_n.
\]
We thus have 
\[
[u_{n,k_n}-u]_{W^{s,p}(\mathbb{R}^N)}\le [u_{n,k_n}-u_{n}]_{W^{s,p}(\mathbb{R}^N)}+[u_{n}-u]_{W^{s,p}(\mathbb{R}^N)}\le \frac{1}{n}+[u_{n}-u]_{W^{s,p}(\mathbb{R}^N)}.
\]
This finally shows that $u$ is the strong limit of the sequence $\{u_{n,k_n}\}_{n\in\mathbb{N}}\subseteq C^\infty_0(\mathbb{R}^N\setminus\{0\})$. This concludes the proof.
\end{proof}
Finally, we extend to the space
\[
\mathcal{D}^{s,p}(\mathbb{R}^N)=\Big\{\varphi\in L^1_{\rm loc}(\mathbb{R}^N)\, :\, [\varphi]_{W^{s,p}(\mathbb{R}^N)}<+\infty\Big\},
\]
a simple yet useful result. The main focus is on dependence on $s$ of the constant appearing in the estimate below.
\begin{lemma}
\label{lm:prin}
Let $N\ge 1$, $0<s< 1$ and $1<p<\infty$  be such that $s\,p>N$. There exists a constant $C=C(N)>0$ such that
\[
\sup_{|h|>0} \int_{\mathbb{R}^N} \left|\frac{\varphi(x+h)-\varphi(x)}{|h|^s}\right|^p\,dx\le C\,s\,(1-s)\,[\varphi]^p_{W^{s,p}(\mathbb{R}^N)},\qquad \mbox{ for every } \varphi\in \mathcal{D}^{s,p}(\mathbb{R}^N).
\]
\end{lemma}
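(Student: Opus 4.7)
The plan is to first prove the estimate for $\varphi \in C^\infty_0(\mathbb{R}^N)$, where every quantity is finite, and then extend to $\mathcal{D}^{s,p}(\mathbb{R}^N)$ by mollification and Fatou's lemma. For $\varphi\in C^\infty_0$, fix $h\ne 0$ and $x\in\mathbb{R}^N$; for any intermediate point $z$, the triangle inequality gives
\[
|\varphi(x+h) - \varphi(x)|^p \le 2^{p-1}\bigl(|\varphi(x+h) - \varphi(z)|^p + |\varphi(z) - \varphi(x)|^p\bigr).
\]
I would average this bound over $z \in B_r(x)$ for $r \in (0, |h|)$ to be integrated out below. In this ball, $|z - x| \le r$ and $|z-(x+h)| \le r + |h|$, so multiplying and dividing each term on the right by $|z-x|^{N+sp}$, respectively $|x+h-z|^{N+sp}$, and extracting the corresponding factors, reproduces the two kernels of the Sobolev-Slobodecki\u{\i} seminorm weighted by $r^{sp}$ and $(r+|h|)^{N+sp}/r^{N}$, respectively.

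To extract the sharp factor $(1-s)$, I would integrate the resulting pointwise inequality against the weight $w(r) = r^{(1-s)p-1}$ over $r \in (0, |h|)$. The crucial identity is
\[
\frac{\displaystyle\int_0^{|h|} w(r)\,r^{sp}\,dr}{\displaystyle\int_0^{|h|} w(r)\,dr} = \frac{|h|^p/p}{|h|^{(1-s)p}/((1-s)p)} = (1-s)\,|h|^{sp}.
\]
After swapping the order of integration (Fubini: first the $r$-integral, then integration in $x$, with the substitution $y = x+h$ for the term involving $|\varphi(x+h) - \varphi(z)|$ so that the $z$-domain becomes a ball of radius $O(|h|)$ around $y$), each double integral on the right is bounded by $[\varphi]^p_{W^{s,p}(\mathbb{R}^N)}$, yielding the estimate with prefactor proportional to $(1-s)|h|^{sp}$. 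The complementary factor $s$, which together with $(1-s)$ produces the $s(1-s)$ of the statement, is expected to come from the analogous argument on the complementary scale $r > |h|$ with a symmetric weight (e.g.\ $r^{-sp-1}$), the two scales being combined in parallel.

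Having established the estimate for $\varphi\in C^\infty_0(\mathbb{R}^N)$, I would extend to $\varphi\in\mathcal{D}^{s,p}(\mathbb{R}^N)$ by mollification: set $\varphi_n = \varphi \ast \rho_n$, so that $\varphi_n \in C^\infty(\mathbb{R}^N)$ and $\varphi_n \to \varphi$ pointwise almost everywhere; by Young's inequality for convolutions, $[\varphi_n]_{W^{s,p}(\mathbb{R}^N)} \le [\varphi]_{W^{s,p}(\mathbb{R}^N)}$. Further truncation to $C^\infty_0$ (as in the cut-off arguments of Proposition \ref{prop:homo}) combined with Fatou's lemma applied to $\int |\varphi_n(x+h)-\varphi_n(x)|^p\,dx$ yields the bound for $\varphi$. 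The \emph{main obstacle} is the clean extraction of the factor $s(1-s)$ with a constant depending only on the dimension $N$: the $(1-s)$ piece arises transparently from the weighted scale integration, but producing the complementary $s$ at large scales and ensuring that all $p$-dependent constants (notably the $2^{p-1}$ from the pointwise triangle inequality) cancel requires delicate bookkeeping. Note that the weaker form with the prefactor $(1-s)$ already suffices for the asymptotic analysis carried out in the proof of Theorem \ref{thm:limits1}.
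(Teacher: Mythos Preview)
Your sketch for the $C^\infty_0$ case has a concrete failure, not just a bookkeeping issue. When you average over $z\in B_r(x)$ with $r<|h|$, the term containing $\varphi(x+h)-\varphi(z)$ picks up the weight $(r+|h|)^{N+sp}/r^N$, as you note. After multiplying by $w(r)=r^{(1-s)p-1}$ and integrating over $r\in(0,|h|)$, the relevant integral behaves near $r=0$ like $\int_0 r^{(1-s)p-1-N}\,dr$, which diverges unless $(1-s)p>N$. Under the hypothesis $sp>N$ this forces $p>2N$ and $N/p<s<1-N/p$, so the argument does not cover the full range. The same problem resurfaces on the ``complementary scale'' $r>|h|$ with weight $r^{-sp-1}$: the term weighted by $r^{sp}$ produces a divergent $\int^\infty r^{-1}\,dr$. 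In addition, the $2^{p-1}$ factor from the pointwise triangle inequality genuinely cannot be absorbed into a constant $C=C(N)$; the known proofs avoid it by exploiting the subadditivity of $h\mapsto\|\tau_h\varphi-\varphi\|_{L^p}$ at the level of $L^p$ norms, not pointwise.

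The paper takes a much shorter route. For $\varphi\in C^\infty_0(\mathbb{R}^N)$ the estimate with the sharp prefactor $C(N)\,s\,(1-s)$ is simply quoted from \cite[Theorem~1.1]{dTGCV} (see also \cite[Lemma~A.1]{BLP}); the only work in Lemma~\ref{lm:prin} is the extension to $\mathcal{D}^{s,p}(\mathbb{R}^N)$. For this the paper uses the structure specific to $sp>N$: by Proposition~\ref{prop:uguaspazi} one has $\mathcal{D}^{s,p}(\mathbb{R}^N)=\mathcal{W}^{s,p}(\mathbb{R}^N)$, so $\varphi$ is continuous and $\widehat\varphi:=\varphi-\varphi(0)$ belongs to $\mathscr{D}^{s,p}_0(\mathbb{R}^N\setminus\{0\})$ by Proposition~\ref{prop:homo}. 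One then takes an approximating sequence $\{\varphi_n\}\subseteq C^\infty_0(\mathbb{R}^N\setminus\{0\})$ in seminorm, upgrades to pointwise a.e.\ convergence via Hardy's inequality, applies the cited result to each $\varphi_n$, and passes to the limit with Fatou. Your mollification-plus-truncation step is morally the same idea, but note that truncation at infinity of a function in $\mathcal{D}^{s,p}(\mathbb{R}^N)$ requires first subtracting a constant (since such functions need not decay), which is exactly the step $\varphi\mapsto\varphi-\varphi(0)$ the paper makes explicit.
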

\begin{proof}
From Proposition \ref{prop:uguaspazi}, we know that $\mathcal{D}^{s,p}(\mathbb{R}^N)=\mathcal{W}^{s,p}(\mathbb{R}^N)$. Thus, for every $\varphi\in \mathcal{D}^{s,p}(\mathbb{R}^N)$, we have
\[
\widehat{\varphi}=\varphi-\varphi(0)\in \mathscr{D}^{s,p}_0(\mathbb{R}^N\setminus\{0\}),
\]
thanks to Proposition \ref{prop:homo}. 
By definition of $\mathscr{D}^{s,p}_0(\mathbb{R}^N\setminus\{0\})$, there exists a sequence $\{\varphi_n\}_{n\in\mathbb{N}}\subseteq C^\infty_0(\mathbb{R}^N\setminus\{0\})$ such that
\[
\lim_{n\to\infty} [\varphi_n-\widehat\varphi]_{W^{s,p}(\mathbb{R}^N)}=0.
\]
By applying Hardy's inequality for $\mathscr{D}^{s,p}_0(\mathbb{R}^N\setminus\{0\})$ to each function $\varphi_n-\widehat{\varphi}$, we have
\[
\mathfrak{h}_{s,p}(\mathbb{R}^N\setminus\{0\})\,\int_{\mathbb{R}^N} \frac{|\varphi_n-\widehat{\varphi}|^p}{|x|^{s\,p}}\,dx\le [\varphi_n-\widehat{\varphi}]_{W^{s,p}(\mathbb{R}^N)}.
\]
The last two equations show that $\{\varphi_n-\widehat\varphi\}_{n\in\mathbb{N}}$ converges to $0$ in the weigthed space $L^p(\mathbb{R}^N;|x|^{-s\,p})$. Thus, we can further suppose that
\[
\lim_{n\to\infty} |\varphi_n(x)-\widehat{\varphi}(x)|=0,\qquad \mbox{ for a.\,e. }x\in\mathbb{R}^N,
\]
up to a subsequence. We now use that
\[
\sup_{|h|>0} \int_{\mathbb{R}^N} \left|\frac{\varphi_n(x+h)-\varphi_n(x)}{|h|^s}\right|^p\,dx\le C\,s\,(1-s)\,[\varphi_n]^p_{W^{s,p}(\mathbb{R}^N)},\qquad \mbox{ for every } n\in\mathbb{N},
\]
thanks to \cite[Theorem 1.1]{dTGCV} (see also \cite[Lemma A.1]{BLP}). By taking the limit as $n$ goes to $\infty$ and using Fatou's Lemma in the left-hand side, we get the desired conclusion.
\end{proof}

\medskip

\end{document}